\documentclass[11pt]{article}
\usepackage{amssymb,titlesec}
\usepackage{amsmath}
\usepackage{amsthm}
\usepackage{epsfig,enumerate,amsbsy,mathrsfs,lineno,ifpdf,etoolbox,enumitem}
\usepackage[mathscr]{euscript}
\usepackage[colorlinks=true, allcolors=blue]{hyperref}
\usepackage{url,tabularx,booktabs}
\usepackage{setspace, graphicx}
\usepackage[normalem]{ulem}
\usepackage[usenames,dvipsnames]{pstricks}
\usepackage[lined,ruled,linesnumbered]{algorithm2e}
\usepackage[margin=1.2in]{geometry}
\usepackage{mathtools}
\usepackage{caption}
\usepackage{subcaption}

\newtheorem{theorem}{Theorem}[section]
\newtheorem{lemma}[theorem]{Lemma}
\newtheorem{conj}[theorem]{Conjecture}
\newtheorem{claim}{Claim}

\titleformat{\subsection}
{\normalfont\normalsize\bfseries}
{\thesubsection.}{1em}{}

\makeatletter
\@addtoreset{case}{proof}
\makeatother

\newcounter{obs}
\newcommand{\obsitem}[1][]{%
	\refstepcounter{obs}%
	\item[($\mathbb{O}\theobs$)#1]%
}

\let\oldenumerate\enumerate
\renewcommand{\enumerate}{
	\oldenumerate
	\setlength{\itemsep}{.5pt}
	\setlength{\parskip}{0pt}
	\setlength{\parsep}{0pt}
}
\numberwithin{equation}{section}

\renewcommand{\qedsymbol}{$\square$}

\newenvironment{claimproof}{%
	\begin{proof}%
		\renewcommand{\qedsymbol}{\raisebox{0.2ex}{\scalebox{0.7}{$(\square)$}}}%
	}{%
	\end{proof}%
}

\title{Coloring $(P_6,C_4)$-free graphs with $\Delta - 1$ colors}

\author{$^1$Uttam K. Gupta, $^2$Dinabandhu Pradhan, $^2$Rashmi Rekha Swain  \\ \\
	$^{1}$Anugrah Memorial College Gaya Ji, Magadh University Bodh Gaya, India\\ \\
	$^{2}$Department of Mathematics \& Computing\\ Indian Institute of Technology (ISM), Dhanbad\\
	\small \tt Email: ukumargpt@gmail.com; dina@iitism.ac.in; rashmirekhaswain2000@gmail.com}

\begin{document}
	\maketitle
	\begin{abstract}
		For a graph $G$, let $\Delta(G)$, $\omega(G)$, and $\chi(G)$ denote the maximum degree, clique number, and chromatic number of $G$, respectively. Let $P_n$ and $C_n$ denote the chordless path and chordless cycle on $n$ vertices, respectively. In this paper, we prove that every $(P_6,C_4)$-free graph $G$ with $\Delta(G)\ge 9$ and $\omega(G)<\Delta(G)$ is $(\Delta(G)-1)$-colorable.
	\end{abstract}
	
	\bigskip
	\noindent\textbf{Keywords:}
	Borodin--Kostochka conjecture; $(P_6,C_4)$-free graphs; chromatic number; induced subgraph.
	
	\medskip
	\noindent\textbf{MSC (2020):}
	05C15, 05C69

\section{Introduction}
 All graphs in this paper are finite, simple, and undirected. We follow the terminology and notations of~\cite{bondy} without redefining them. Let $G= (V(G), E(G))$ be a graph. For a positive integer $k$, let  $[k]= \{1,2,\dots,k\}$. A \emph{$k$-coloring} of $G$ is a function $c : V(G)\to [k]$ that assigns every vertex $v\in V(G)$, a color $c(v)\in [k]$ such that no two adjacent vertices in $G$ receive the same color.  A fundamental problem in graph theory is to bound the chromatic number $\chi(G)$ in terms of  clique number $\omega(G)$ or maximum degree $\Delta(G)$. By greedy coloring approach,  $\chi(G)\leq \Delta(G) + 1$. Note that the upper bound is achieved by complete graphs and odd cycles. Brooks~\cite{brooks} proved that these are the only graphs achieving the bound $\Delta(G)+1$. A version of Brooks' result can be stated as follows. 

\begin{theorem}[Brooks’ Theorem~\cite{brooks}]\label{thm-Brooks}
For a graph $G$, if $\Delta(G)\geq 3$ and $\omega(G)\leq \Delta(G)$, then $\chi(G)\leq \Delta(G)$.
\end{theorem}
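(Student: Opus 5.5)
We argue by contradiction using a minimal counterexample. Fix $\Delta\ge 3$ and let $G$ be a graph with $\Delta(G)\le\Delta$ and $\omega(G)\le\Delta$ that is not $\Delta$-colorable, chosen with $|V(G)|$ minimum. Deleting a vertex or an edge preserves both hypotheses, so every proper subgraph of $G$ is $\Delta$-colorable. Hence $G$ is connected, and every vertex has degree exactly $\Delta$: a vertex $v$ of degree less than $\Delta$ could be deleted, the rest colored, and $v$ restored, since its neighbors use fewer than $\Delta$ colors. So $G$ is $\Delta$-regular and connected. Since $\omega(G)\le\Delta$, $G$ is not $K_{\Delta+1}$, and because $\Delta\ge 3$ it is not an odd cycle.

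The key step is to find a vertex $v$ with two nonadjacent neighbors $x,y$ such that $G-\{x,y\}$ is connected. Given such a triple, color $x$ and $y$ with color $1$. Order the remaining vertices $v_1,\dots,v_{n-2}=v$ by decreasing distance from $v$ in $G-\{x,y\}$, for instance by reversing a BFS order from $v$. Now color greedily. Each $v_i$ with $i<n-2$ has a neighbor later in the order (its BFS parent), so at most $\Delta-1$ of its neighbors are already colored. Finally $v$ has $\Delta$ colored neighbors, but $x$ and $y$ share a color, so its neighbors use at most $\Delta-1$ colors and a color remains for $v$. This gives a $\Delta$-coloring, a contradiction.

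The main obstacle is producing $v,x,y$, and I would split on connectivity. Suppose first that $G$ is $3$-connected. Since $G$ is regular, connected, and not complete, some vertex $v$ has two nonadjacent neighbors; otherwise every neighborhood is a clique and $G$ is complete. Then $G-\{x,y\}$ is connected by $3$-connectivity.

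Now suppose $\kappa(G)\le 2$. If $G$ has a cut vertex $u$, take the blocks, or the components of $G-u$. Each end-block $B$ gives the subgraph $H=G-(B-u)$, which is smaller. I would color $H$ and $B$ separately and then permute colors so they agree at $u$; this works because $u$ has degree less than $\Delta$ inside each piece. More cleanly, each piece has a vertex of degree less than $\Delta$, so the greedy BFS argument rooted at that vertex colors it directly, and the colorings are then glued at $u$.

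If $G$ is $2$-connected but not $3$-connected, choose $v$ with $G-v$ not $2$-connected; then $\deg v=\Delta\ge 3$. The graph $G-v$ has at least two end-blocks, and $v$ has a neighbor $x$ and $y$ in the interior of two different end-blocks, since otherwise a cut vertex of $G-v$ would be a cut vertex of $G$. These $x,y$ are nonadjacent because they lie in different end-blocks away from the cut vertices. Moreover $G-\{v,x,y\}$ is connected, because removing a non-cut vertex from each end-block keeps $G-v$ connected, and $v$ still has another neighbor because $\Delta\ge 3$. So $G-\{x,y\}$ is connected. This case analysis, especially verifying that end-block argument, is where care is needed.
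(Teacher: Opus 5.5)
The paper does not prove this statement. Brooks' theorem is quoted from \cite{brooks} and used only as a black box, in the proof of Lemma~\ref{lem-cntrbkc}, to rule out $\Delta(G-v)=8$ together with $\chi(G-v)=9$. There is therefore no proof in the paper to compare against. What you have written is essentially Lov\'asz's well-known short proof, and it is correct.

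Your reformulation, with a fixed $\Delta\ge 3$ and the hypothesis $\Delta(G)\le\Delta$, is a good choice. In the original formulation, deleting a vertex can lower $\Delta(G)$ below $3$, or make $\omega(G-v)\le\Delta(G-v)$ fail. Your version makes the class closed under subgraphs, and that is what lets the minimal-counterexample reduction to a connected $\Delta$-regular graph go through.

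Three points are worth tightening.
\begin{enumerate}
\item Minimality of $|V(G)|$ does not make every proper subgraph $\Delta$-colorable, since deleting an edge does not reduce the vertex count. You only ever use vertex-deleted subgraphs, so either say that or minimize $|V(G)|+|E(G)|$.
\item In the cut-vertex case, the gluing does not depend on degrees. The two pieces share only $u$ and have no edges between $B-u$ and $H-u$, so permuting colors in one piece always suffices. The fact that $u$ has degree less than $\Delta$ in each piece is what justifies your second option: coloring each connected piece greedily in reverse BFS order from $u$, with no appeal to minimality.
\item In the $2$-connected case, spell out why $G-\{v,x,y\}$ is connected. The vertices $x$ and $y$ are non-cut vertices lying in distinct blocks of $G-v$, so each block of $G-v$ loses at most one vertex. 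Each block therefore stays connected and nonempty, since it still contains its cut vertex. No cut vertex is deleted, so the block--cutvertex tree still holds the pieces together.
\end{enumerate}
With these details filled in, the argument is complete.
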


 For graphs with $\omega(G) = \Delta(G)\geq 3$, Brooks' theorem assures $\omega(G) = \chi(G) = \Delta(G)$; thus the bound in  Brooks' theorem is tight. Also, for $3\leq \Delta(G)\leq 8$, there exist graphs with $\omega(G)\leq \Delta(G) - 1$ and $\chi(G) = \Delta(G)$~\cite{clawfree}. Hence, for $3\leq \Delta(G)\leq 8$, Brooks' bound cannot be improved to $\chi(G) \leq \Delta(G) - 1$ by taking the hypothesis $\omega(G)\leq \Delta(G) - 1$. However, the situation may be different for $\Delta(G)\geq 9$. In 1977, Borodin and Kostochka conjectured the following strengthening of the Brooks' bound.

\begin{conj}[Borodin–Kostochka conjecture~\cite{borodin}]\label{Conj_1}
	 For a graph $G$, if $\Delta(G)\geq 9$ and $\omega(G) \leq \Delta(G)-1$, then $\chi(G)\leq \Delta(G) - 1$.
 \end{conj}
 
Since there exist graphs with $\omega(G)\leq \Delta(G) - 2$ and $\chi(G)\geq \Delta(G) - 1$~\cite{clawfree}, Conjecture~\ref{Conj_1} cannot be further strengthened to $\chi(G)\leq \Delta(G) - 2$ if the condition $\omega(G)\leq \Delta(G) - 1$ is replaced by $\omega(G)\leq \Delta(G) - 2$. In the same paper~\cite{borodin} where Borodin and Kostochka proposed Conjecture~\ref{Conj_1}, they proved the conjecture for graphs with $\omega(G)\leq \left\lfloor \tfrac{\Delta(G)-1}{2} \right\rfloor$. Note that Conjecture~\ref{Conj_1} can be restated as: any graph $G$ with $\chi(G) \geq \Delta(G)\geq 9$ contains a clique of size $\Delta(G)$. This reformulation motivated researchers to show the existence of large cliques in the graphs whose chromatic number is very close to its maximum degree. In~\cite{maxdegree29}, Kostochka showed that any graph $G$ with $\chi(G) = \Delta(G) \geq 29$ contains a clique of size at least $\Delta(G) - 28$. In~\cite{graphswithbigcliques}, Cranston and Rabern proved that any graph $G$ with $\chi(G) = \Delta(G) \geq 13$ contains a clique of size at least $\Delta(G) - 3$. Another direction of research towards the resolution of Conjecture~\ref{Conj_1} is to assume the existence of a minimal (in terms of number of vertices) counterexample to the conjecture and study its structure. In~\cite{d1choosable}, Cranston and Rabern did an in-depth study on the structure of a minimal counterexample to the conjecture. In fact, they obtained a list of small graphs that should not appear as induced subgraphs in any such counterexample. We suggest the readers to visit the website (\url{https://landon.github.io/graphdata/borodinkostochka/offline/index.html}) to find the list of such small graphs.

\subsection{Related Work}
\begin{figure}[t]
		\centering
		\includegraphics[width= 0.7 \textwidth]{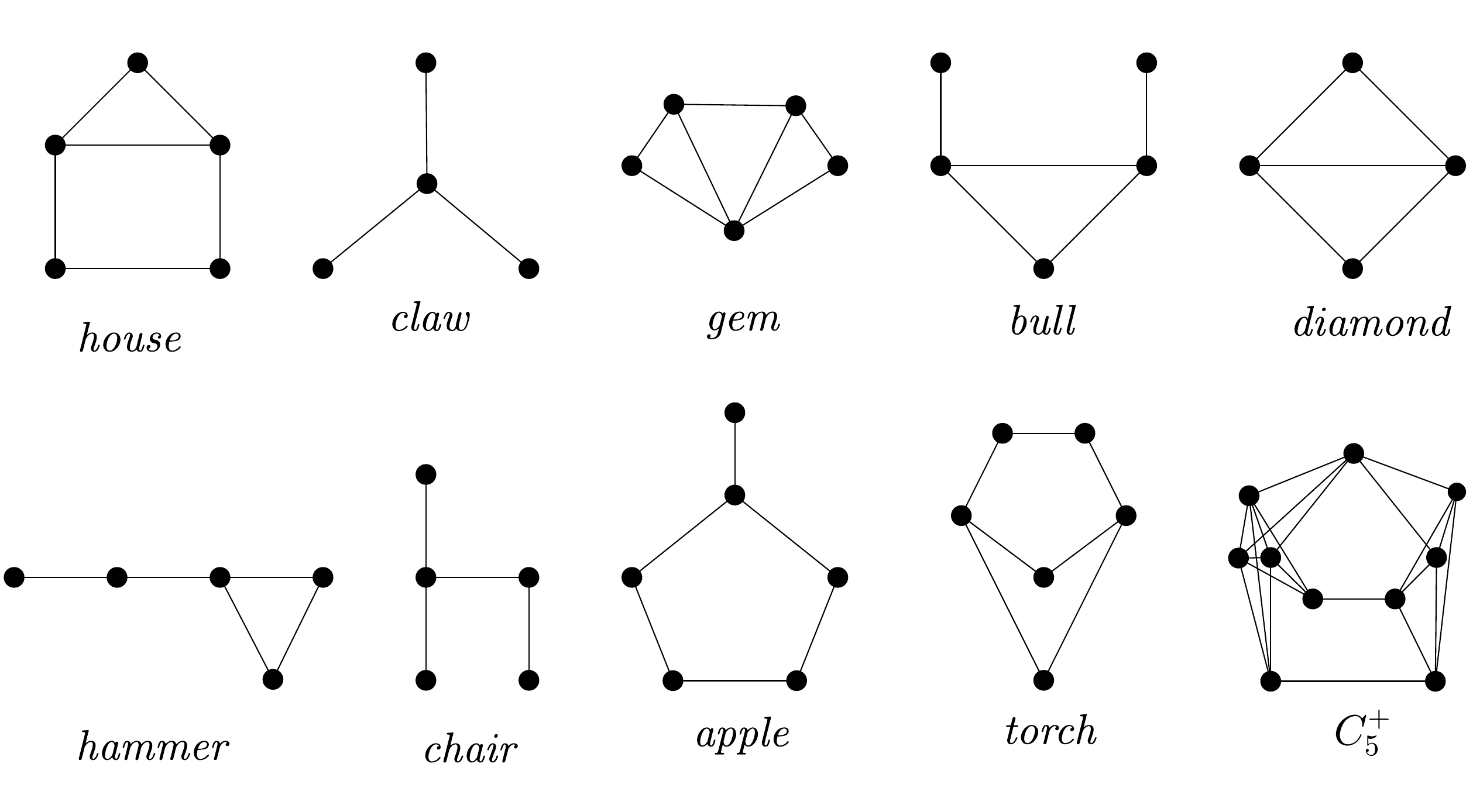}
		\caption {Some special graphs.}
		\label{fig0}
	\end{figure}
Although substantial progress has been made towards the resolution of the Borodin-Kostochka conjecture, it remains open in general. By using some probabilistic methods, Reed~\cite{reed} proved that the Borodin-Kostochka conjecture is true for graphs with $\Delta(G) \geq 10^{14}$. Recently, Dvořák et al.~\cite{3.109correspondencecol} proved the conjecture for graphs with $\Delta(G) \geq 3\cdot 10^9$ in the stronger setting of correspondence coloring. Together, these results provide asymptotic confirmation of the conjecture. A class of graphs is \emph{hereditary} if it is closed under taking induced subgraphs. For a family $\mathcal{H}=\{H_1,H_2,\ldots,H_n\}$ of graphs, a graph $G$ is said to be \emph{$\mathcal{H}$-free} (or \emph{$(H_1,H_2,\ldots,H_n)$-free}) if no induced subgraph of $G$ is isomorphic to any $H_i,i\in [n]$. In particular, for $\mathcal{H}=\{H\}$, we simply write $H$-free instead of $\{H\}$-free. Note that a class of $\mathcal{H}$-free graphs is hereditary. Many well-known classes of graphs such as perfect graphs, chordal graphs, and cographs can be characterized as $\mathcal{H}$-free graphs for a suitable family $\mathcal{H}$ of graphs. So it is natural to attack any graph problem on graphs with some forbidden induced subgraphs. Consequently, the recent trend shows an increasing interest to prove the Borodin-Kostochka conjecture for $\mathcal{H}$-free graphs. Below, we present a list of such $\mathcal{H}$-free graph classes for which the conjecture has been proved.

\begin{table}[ht]
\centering
\caption{Graph classes for which Conjecture~\ref{Conj_1} holds. 
}
\label{tab:conjecture}
\renewcommand{\arraystretch}{1.2}
\begin{tabularx}{\textwidth}{c@{\hspace{9mm}}   l@{\hspace{14mm}}   X}
\toprule
\textbf{Year} & \textbf{Authors} & \textbf{Graph Class} \\
\toprule
2013 & Cranston and Rabern \cite{clawfree} 
     & Claw-free graphs. \\

2021 & Gupta and Pradhan \cite{p5c4} 
     & $(P_5,C_4)$-free graphs. \\

2021 & Dhurandhar \cite{p5-free} 
     & $(P_4\cup K_1)$-free, $P_5$-free, and chair-free graphs. \\

2022 & Cranston et al. \cite{p5gem} 
     & $(P_5,\text{gem})$-free graphs. \\

2024 & Chen et al. \cite{p2up3house} 
     & $(P_2\cup P_3,\text{house})$-free graphs. \\

2024 & Chen et al. \cite{hammer} 
     & Hammer-free graphs. \\

2024 & Chen et al. \cite{oddholefree} 
     & Odd-hole-free graphs. \\

2024 & Lan et al. \cite{p2p3c4} 
     & $(P_2\cup P_3,C_4)$-free graphs. \\

2024 & Gupta and Pradhan \cite{p6} 
     & $(P_6,C_4,\text{bull})$-free and $(P_6,C_4,\text{diamond})$-free graphs. \\

2024 & Chen et al. \cite{K7C5} 
     & $(P_6,C_4,K_7)$-free and $(P_6,C_4,C_5^{+})$-free graphs. \\

2024 & Lan and Lin \cite{k13} 
     & $\overline{K_{1,3}}$-free graphs. \\

2025 & D. Wu and R. Wu \cite{P6appletorch} 
     & $(P_6,\text{apple},\text{torch})$-free graphs. \\

2026 & Q. Wu and J. Yang \cite{bulldiamond} 
     & $(\text{bull},\text{diamond})$-free graphs. \\
\bottomrule
\end{tabularx}
\end{table}

\subsection{Our Contribution}
In this paper, we show that Borodin-Kostochka conjecture is true for $(P_6, C_4)$-free graphs. This extends the results of~\cite{K7C5,p5c4,p6,p2p3c4,P6appletorch}. In particular, we prove the following theorem.

\begin{theorem}[Main Theorem]\label{thm-mainthm}
Let $G$ be a $(P_6, C_4)$-free graph. If $\Delta(G)\geq 9$ and $\omega(G)\leq \Delta(G)-1$, then $\chi(G)\leq \Delta(G) - 1$.
\end{theorem}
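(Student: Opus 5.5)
We argue by minimal counterexample. Let $G$ be a $(P_6,C_4)$-free graph with $\Delta(G)=\Delta\ge 9$, $\omega(G)\le \Delta-1$ and $\chi(G)\ge\Delta$, chosen with $|V(G)|$ minimum. The first step is to establish the standard structure of such a $G$. We may assume $G$ is connected, and we may assume $\chi(G)=\Delta$ by Theorem~\ref{thm-Brooks}. If some vertex $v$ has $d(v)\le \Delta-2$, then $G-v$ is $(\Delta-1)$-colorable. This holds by minimality, once we handle separately the cases where $\Delta(G-v)$ drops: in that case Brooks or a direct argument applies, unless $G-v$ contains a $K_{\Delta-1}$. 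A $(\Delta-1)$-coloring of $G-v$ then extends to $v$, a contradiction. So $\delta(G)\ge \Delta-1$, and $G$ is \emph{vertex-critical} for $\Delta$ colors. We then import the Cranston--Rabern list of forbidden induced subgraphs of a minimal counterexample~\cite{d1choosable}. The most useful of these are $K_{\Delta-1}$ together with a vertex having many neighbours in it, and the various $d_1$-choosable configurations. In particular, every vertex of degree $\Delta-1$ is ``low'', and low vertices induce a graph with restricted structure. We also note that $G$ has no clique cutset and no pair of true twins (comparable neighbourhoods) that could be merged or reduced.

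The second step uses $C_4$-freeness heavily. In a $C_4$-free graph, any two nonadjacent vertices have a clique as common neighbourhood. Also, for a maximum clique $Q$, each vertex outside $Q$ sees a clique portion, and two outside vertices with disjoint neighbourhoods in $Q$ cannot be adjacent without creating a $C_4$. Fix a maximum clique $Q$ with $|Q|=\omega$. Split into two cases: $\omega$ large (say $\omega\ge \Delta-c$ for a small constant, or $\omega\ge 7$), and $\omega$ small.

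For small $\omega$ we use the known $\chi$-bounding results for $(P_6,C_4)$-free graphs. Such graphs satisfy $\chi\le \lceil 5\omega/4\rceil$ or a similar linear bound. With $\Delta\ge 9$ this yields $\chi\le \Delta-1$ whenever $\omega\le 6$, roughly. This recovers and extends the $K_7$-free result of~\cite{K7C5}, which we may invoke directly for $\omega\le 6$.

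For large $\omega\ge 7$ we analyse the structure around $Q$. Partition $V\setminus Q$ by attachment to $Q$, and use $P_6$-freeness to show that the vertices at distance two from $Q$ form few components that are homogeneous to the attachment sets. $C_4$-freeness then forces these attachment sets to be cliques. The degree bound $\Delta$ on vertices of $Q$, which already have $\omega-1$ clique neighbours, leaves each vertex of $Q$ at most $\Delta-\omega+1$ outside neighbours. Combined with $\delta\ge \Delta-1$, this means outside vertices are heavily constrained. The aim is to locate one of the forbidden configurations from the first step, for instance a vertex of $Q$ whose neighbourhood contains a $K_{\Delta-1}$ minus few edges, or a blown-up $C_5$ structure. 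Failing that, we construct an explicit $(\Delta-1)$-coloring by coloring $Q$ first and extending along the distance layers.

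The main obstacle is the large-$\omega$ case in which $G$ looks like a blow-up of $C_5$ or of a Petersen-like $(P_6,C_4)$-free ``ring'' structure, as in the $C_5^+$ case of~\cite{K7C5}. There, neither a forbidden Cranston--Rabern configuration nor a simple greedy extension is immediate. For this case, I would first try to use the known decomposition of $(P_6,C_4)$-free graphs containing a $C_5$ into clique blowups, and color the blowup directly by a counting (Hall-type) argument. That argument would use $|B_i|+|B_{i+1}|\le \omega\le\Delta-1$ and $|B_{i-1}|+|B_i|+|B_{i+1}|-1\le \Delta$.
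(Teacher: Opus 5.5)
\textbf{There is a genuine gap.} Your first step is sound: a minimum counterexample is vertex-critical, so $\delta\ge\Delta-1$ and every degree lies in $\{\Delta-1,\Delta\}$. Disposing of $\omega\le 6$ via the Karthick--Maffray bound $\chi\le\lceil 5\omega/4\rceil$, or via the $K_7$-free result, is also legitimate.

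The gap is that the case $\omega\ge 7$, which is the whole content of the theorem, is never argued. You state an aim (locate a forbidden configuration, ``failing that'' colour along distance layers, ``I would first try'' a Hall-type count), but give no argument that either outcome occurs. Several specific ingredients are missing or wrong:
\begin{itemize}
\item You never reduce to $\Delta=9$; Theorem~\ref{theorem 3} (Catlin/Kostochka) does this for any hereditary class. Without it, your residual case is $7\le\omega\le\Delta-1$ with $\Delta$ unbounded, so no finite counting of the kind you envisage can close it. After the reduction, only $\omega\in\{7,8\}$ with all degrees in $\{8,9\}$ remains.
\item The claimed structure around a maximum clique $Q$ (distance-two vertices forming few components homogeneous to attachment sets) is asserted without justification.
\item The fallback, that $(P_6,C_4)$-free graphs containing a $C_5$ decompose into clique blowups, is false. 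Even with no clique cutset and no universal vertex, the Karthick--Maffray theorem also produces bands, belts and boilers. In the $C_6$-free case, the sets $A_i,B_i,D_i$ around a $C_5$ have non-homogeneous adjacencies. A Hall-type count for $C_5$-blowups, where inequalities like yours can indeed suffice, says nothing about these structures. Even blowups of the Petersen graph need a separate lemma.
\item A minor error: vertex-critical graphs can have true twins, since every constituent clique of a blowup consists of them. Only nonadjacent $u,v$ with $N(u)\subseteq N(v)$ are excluded.
\end{itemize}

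\textbf{What closes the case in the paper} is near-regularity, $|d(u)-d(v)|\le 1$, applied to neighbourhood containments. If $N[u]\cup S\subseteq N[v]$ with $S$ disjoint from $N[u]$ and $|S|\ge 2$, you get a contradiction; with $|S|=1$ you get $d(v)=9$ and $d(u)=8$. The paper applies this systematically to the partition $\mathcal{C}\cup A\cup B\cup D\cup R$ around an induced $C_5$. That $C_5$ exists by the Strong Perfect Graph Theorem and is dominating because there is no clique cutset. The degree argument is combined with the $d_1$-choosable exclusions $K_3\vee P_4$, $K_4\vee S$, $K_2\vee C_5$ and $K_2\vee\textit{kite}$, and the proof splits on whether $G$ contains an induced $C_6$.
\begin{itemize}
\item In the $C_6$-free case this forces $G\in\{G_1,G_2,G_3,G_4\}$. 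These are excluded by $\omega(G_2)>8$, $\Delta(G_4)<9$, and explicit $8$-colourings of $G_1$ and $G_3$.
\item In the $C_6$ case, the Karthick--Maffray structure leaves blowups of $F_3$, graphs containing $F_1$, and blowups of $H_1,\dots,H_4$. Each is pinned down numerically and then excluded by degree counting, by an explicit $8$-colouring, or by reduction to the Petersen-blowup lemma.
\end{itemize}
Your proposal needs an argument of this kind, or a genuine substitute, for $\omega\in\{7,8\}$. As it stands, it does not prove the theorem.
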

Since a $(P_6,C_4)$-free graph $G$ may contain an induced $C_6$, we divide the proof of Theorem~\ref{thm-mainthm} into two major cases: (i) $G$ is $C_6$-free, and (ii) $G$ contains an induced $C_6$. For the first case, we prove the following result.
\begin{theorem}\label{thm-P6C4C6(copy)}
If $G$ is a $(P_6, C_4, C_6)$-free graph with $\Delta(G)\geq 9$ and $\omega(G)\leq \Delta(G)-1$, then $\chi(G)\leq \Delta(G) - 1$.
\end{theorem}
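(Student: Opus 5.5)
We argue by contradiction and take a minimal counterexample. Let $G$ be a $(P_6,C_4,C_6)$-free graph with $\Delta(G)=\Delta\ge 9$, $\omega(G)\le \Delta-1$ and $\chi(G)\ge \Delta$, chosen with $|V(G)|$ as small as possible. The first step is to reduce to a vertex-critical graph. Take a $\Delta$-vertex-critical induced subgraph $H$ of $G$. If $\Delta(H)<\Delta$, then Brooks' Theorem~\ref{thm-Brooks} (or greedy colouring) yields $\chi(H)\le \Delta-1$, unless $H$ is $K_\Delta$; the latter is excluded by $\omega\le\Delta-1$. Hence $\Delta(H)=\Delta$ and $H$ is again a counterexample. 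We may therefore assume that $G$ is $\Delta$-vertex-critical, so $\delta(G)\ge \Delta-1$, and $G$ is connected and has no clique cutset. We also use the standard facts about minimal counterexamples to Conjecture~\ref{Conj_1}:
\begin{enumerate}[label=(\alph*)]
\item low vertices (those of degree $\Delta-1$) induce a graph whose blocks are complete graphs or odd cycles (Gallai);
\item no $K_{\Delta-1}$ has more than a few low vertices;
\item the forbidden small configurations of Cranston and Rabern cannot occur, for instance $K_{\Delta-1}$ together with specific attachments.
\end{enumerate}

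The structural heart of the proof exploits the fact that a $(P_6,C_4,C_6)$-free graph with no clique cutset is very restricted. Since $G$ is $C_4$- and $C_6$-free, the only possible holes are $C_5$. If $G$ has no $C_5$ either, then $G$ is chordal, hence perfect, so $\chi(G)=\omega(G)\le \Delta-1$, a contradiction. So $G$ contains an induced $C_5$, say $C=v_1v_2\cdots v_5$.

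Next we partition $V(G)\setminus C$ by adjacency to $C$. $C_4$-freeness forces every vertex with at least two neighbours on $C$ to see either two consecutive vertices, three consecutive vertices, or all of $C$. $P_6$- and $C_6$-freeness then control the vertices at distance two from $C$. The aim is to show that, after using the absence of clique cutsets, $G$ is a ``blown-up $C_5$'' plus a universal part: $V(G)=A_1\cup\dots\cup A_5\cup Z$, where each $A_i$ is a clique, $A_i$ is complete to $A_{i\pm1}$ and anticomplete to $A_{i\pm2}$, and $Z$ is a clique complete to all $A_i$. Possibly there are a few extra pieces attached through clique-like interfaces, which must be ruled out by criticality. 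I expect this step to be the main obstacle. Vertices at distance $\ge 2$ from $C$ and partially attached vertices create many subcases. I would handle them by first showing that $N(C)$ dominates $G$ (a vertex at distance $3$ gives an induced $P_6$ using $C$), and then by showing that the vertices at distance $2$ are attached via clique separators, which contradicts criticality.

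With this structure established, the colouring is explicit. Write $a_i=|A_i|$ and $z=|Z|$. Then $\omega=z+\max_i(a_i+a_{i+1})$, and a vertex of $A_i$ has degree $z+a_{i-1}+a_i+a_{i+1}-1$. The chromatic number of a clique blow-up of $C_5$ equals $\max\{\max_i(a_i+a_{i+1}),\lceil \sum a_i/2\rceil\}$, and adding $Z$ adds $z$. Thus $\chi\ge\Delta$ with $\omega\le \Delta-1$ would force
\[ z+\left\lceil \tfrac{1}{2}\textstyle\sum_i a_i\right\rceil\ge \Delta\ge z+a_{i-1}+a_i+a_{i+1}-1 \quad\text{for all } i. \]
Averaging over $i$ gives $\tfrac{3}{5}\sum a_i - 1\le \tfrac12\sum a_i+\tfrac12$, so $\sum a_i\le 15$. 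Together with $\Delta\ge 9$ this leaves only finitely many small cases, and each of these is checked directly; they are exactly the small exceptional graphs that appear only for $\Delta\le 8$. This yields the contradiction, and hence $\chi(G)\le\Delta-1$.
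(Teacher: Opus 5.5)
Your reduction to a $\Delta$-vertex-critical $G$ with all degrees in $\{\Delta-1,\Delta\}$ is fine. So is the observation that $G$ contains an induced $C_5$, and so is the closing count for a clique blowup of $C_5$ joined to a clique $Z$: for $z=0$ your inequality already gives $\Delta\le\lceil S/2\rceil\le 8$, and for $z\ge 1$ a vertex of $Z$ has degree $z+S-1$, which is even quicker.

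The gap is the structural step in the middle. It carries essentially all the content of the theorem, and you only announce it. The claim that, once clique cutsets are excluded, $G$ is a clique blowup of $C_5$ joined to a clique is false. Having no clique cutset does make $C$ dominating. But vertices seeing exactly one vertex, or exactly two consecutive vertices, of $C$ survive, and a priori consecutive twin classes need not be complete to each other.

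Here is a concrete example. In $C=v_1\cdots v_5$, blow up $v_1$ into a $6$-clique $Q_1$ and $v_3,v_4$ into $3$-cliques $Q_3,Q_4$. Then add a vertex $a$ complete to $Q_1$, and a $2$-clique $B$ complete to $\{a\}\cup Q_3\cup Q_4$. This is the graph $G_1$ of Theorem~\ref{thm-p6c4c6characterization}. It is $(P_6,C_4,C_6)$-free, has no clique cutset, has all degrees equal to $8$ or $9$, and has $\omega=8=\Delta-1$. Yet it is not of your form: $a$ has one neighbour on $C$, and the vertices of $B$ have two. So ``extra pieces attached through clique-like interfaces, ruled out by criticality'' is not a proof step, and your items (a)--(c) are too vague to supply one.

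What is missing is a genuine analysis of these configurations. The paper first uses Theorem~\ref{theorem 3} to reduce to $\Delta=9$, so all degrees are $8$ or $9$. It then splits the vertices off $C$ into $A_i,B_i,D_i,R$ according to whether they see one, two, three or five vertices of $C$. Two tools are then used repeatedly:
\begin{enumerate}
\item comparing degrees of vertices with nested closed neighbourhoods, since $N[x]\subseteq N[y]$ forces $|N[y]\setminus N[x]|\le 1$;
\item excluding the $d_1$-choosable induced subgraphs of Lemma~\ref{lem-nod1choosable}, namely $K_3\vee P_4$, $K_4\vee 2K_2$, $K_4\vee(K_2\cup 2K_1)$, $K_4\vee 4K_1$ and $K_2\vee C_5$.
\end{enumerate}
This shows $R=\emptyset$ and settles the cases $A\neq\emptyset$, $A=\emptyset\neq B$, and $A=B=\emptyset$ separately.

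The outcome is not one clean structure but four residual graphs $G_1,\dots,G_4$. $G_2$ is excluded by $\omega$ and $G_4$ by $\Delta$. $G_1$ and $G_3$, however, are not excluded by any local criticality argument and must be $8$-coloured explicitly. Your blowup computation only covers the easiest of these situations. That is essentially $G_4$, the blowup of $C_5$ by $3$-cliques, which is exactly where your bound $S\le 15$ is tight.
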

A graph $G$ is \emph{perfect} if $\chi(H)=\omega(H)$ for every induced subgraph $H$ of $G$; otherwise, $G$ is \emph{imperfect}. A \emph{clique-cutset} is a clique $X$ of $G$  such that $G-X$ has more connected components than $G$. To prove Theorem~\ref{thm-P6C4C6(copy)}, we first obtain the following structural result.
\begin{figure}[t]
				\centering
				\includegraphics[width= 0.6\textwidth]{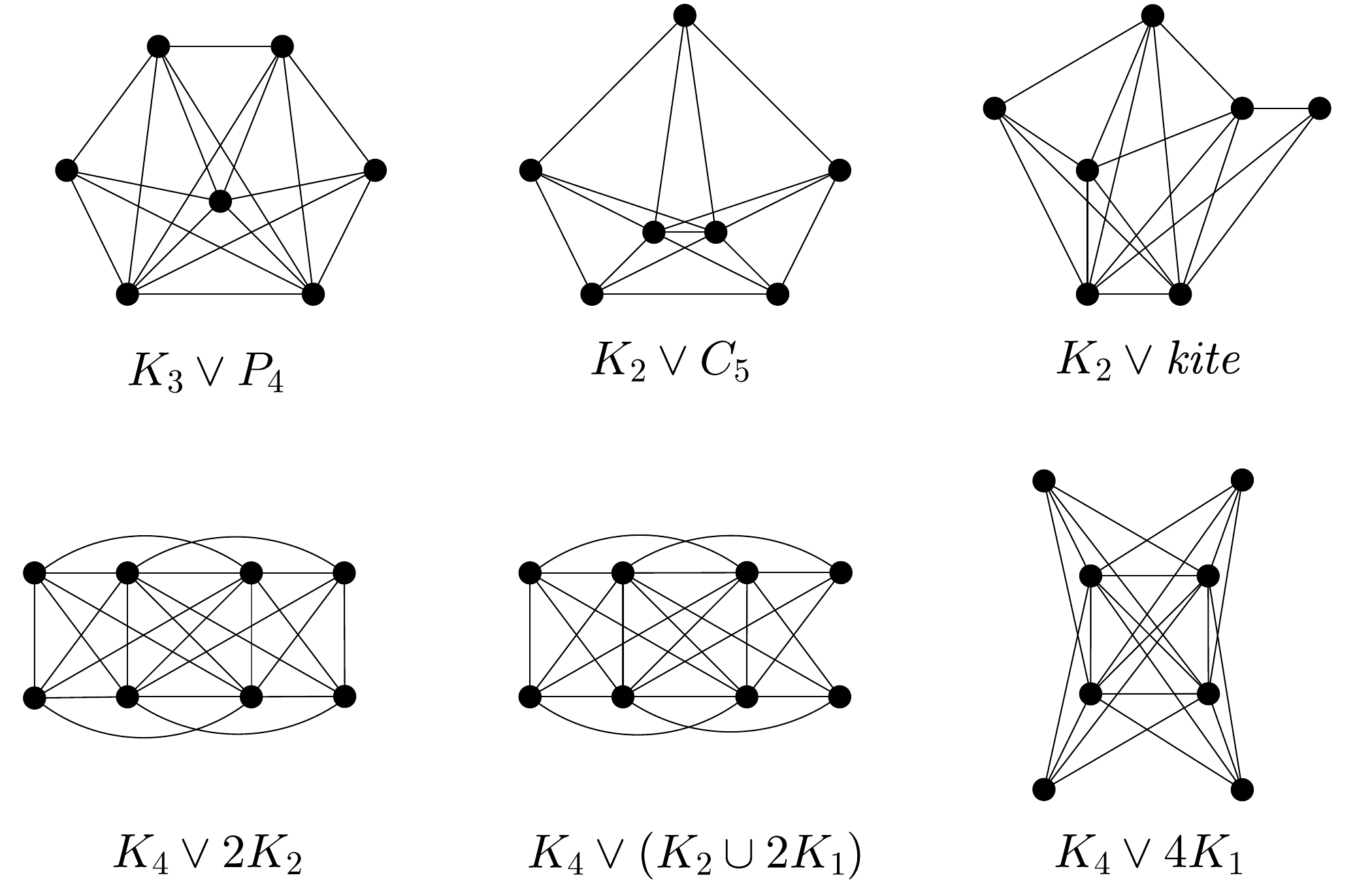}
				\caption {Some forbidden graphs.}
		\label{figforbidden}
	\end{figure}
    \begin{figure}[t]
				\centering
				\includegraphics[width= 0.7\textwidth]{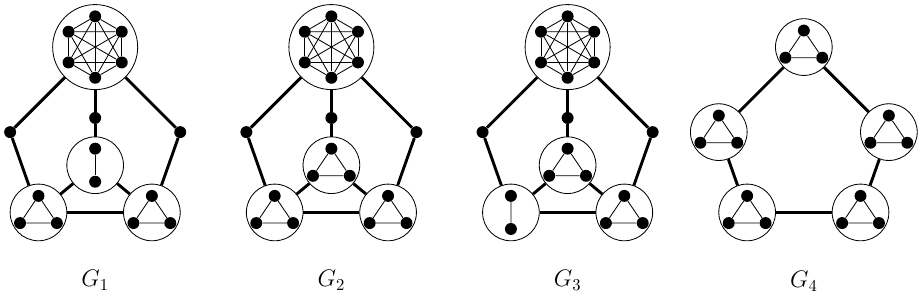}
				\caption {A bold line between two shapes represents that the corresponding vertex sets are complete to each other.}
		\label{figG1G2G3G4}. 
\end{figure}  

\begin{theorem}\label{thm-p6c4c6characterization}
Let $G$ be a $(P_6, C_4, C_6)$-free graph such that $8\leq d(x) \leq 9$ for every $x \in V(G)$ and $G$ does not contain any clique-cutset. Then one of the following holds:
\begin{itemize}
\item $G$ is perfect.
\item $G$ contains $K$ as an induced subgraph, where $K\in \{K_3\vee P_4, K_2\vee C_5, K_4\vee (K_2\cup 2K_1), K_4\vee 2K_2, K_4\vee 4K_1\}$ (See Figure~\ref{figforbidden}).
\item $G$ is isomorphic to one of the graphs $G_1$, $G_2$, $G_3$, or $G_4$ (See Figure~\ref{figG1G2G3G4}).
\end{itemize}
\end{theorem}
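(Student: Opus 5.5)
Assume $G$ is imperfect and contains no induced $K$ from the second item; we show $G$ is one of $G_1,\dots,G_4$.

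\textbf{Step 1: find a $C_5$.} By the Strong Perfect Graph Theorem, an imperfect graph contains an odd hole or an odd antihole. In a $P_6$-free graph every hole has length at most $6$. Since $\overline{C_7}$ contains $C_4$, and every antihole on at least $6$ vertices contains $C_4$, the only option left is an induced $C_5$. Fix such a cycle $C=v_1v_2v_3v_4v_5v_1$.

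\textbf{Step 2: classify vertices outside $C$.} Consider the possible neighbourhoods on $C$ of a vertex $x\notin C$.
\begin{itemize}
\item $C_4$-freeness forbids $x$ being adjacent to exactly two non-consecutive vertices of $C$. It also forbids $x$ being adjacent to exactly $v_i,v_{i+1},v_{i+3}$, or to two consecutive vertices together with the opposite vertex.
\item The admissible traces are therefore: empty, a single $v_i$, a consecutive pair $\{v_i,v_{i+1}\}$, a consecutive triple $\{v_{i-1},v_i,v_{i+1}\}$, or all of $C$.
\item Call the corresponding classes $Z$, $A_i$, $B_i$, $T_i$ and $F$.
\end{itemize}
We then derive adjacencies between these classes using $P_6$- and $C_6$-freeness. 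Typical facts: each $B_i$, $T_i$ and $F$ is a clique; $F$ is complete to the $T_i$; and $Z$ can attach only to $F$. The forbidden long paths have the form $z$--$a_i$--$v_i$--$\dots$, and the forbidden $C_6$ arises from $a_i$, $a_{i+2}$ and a path through $C$.

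\textbf{Step 3: use the hypotheses.} Since $G$ has no clique-cutset, several classes must be empty or tightly constrained. For instance, if $Z\neq\emptyset$, then $F$ (a clique) separates $Z$ from $C$, so $Z=\emptyset$. Similarly, each $A_i$ must have neighbours other than $v_i$, which forces attachments into $B$- or $T$-sets.

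Next, forbidding the five graphs $K_3\vee P_4$, $K_2\vee C_5$, $K_4\vee(K_2\cup2K_1)$, $K_4\vee2K_2$ and $K_4\vee4K_1$ bounds the clique sizes.
\begin{itemize}
\item $|F|\le 1$, since two adjacent vertices of $F$ together with $C$ give $K_2\vee C_5$. In particular $F$ is a clique on at most one vertex.
\item $F\cup T_i$ combined with an induced $P_4$ of $C$ is restricted by $K_3\vee P_4$.
\end{itemize}
The degree condition $8\le d(x)\le 9$ then pins down the sizes: $d(v_i)=2+|A_i|+|B_{i-1}|+|B_i|+|T_{i-1}|+|T_i|+|T_{i+1}|+|F|\in\{8,9\}$. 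Together with bounded clique sizes inside each part, this gives a finite list of size vectors. For each vector we check the forced adjacencies between parts and match the result to $G_1,\dots,G_4$; the blow-up structure of Figure~\ref{figG1G2G3G4} reflects exactly a $C_5$ with cliques substituted at positions.

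\textbf{Main obstacle.} The hardest part is the adjacency analysis between different $A_i$, $B_j$ and $T_k$ sets, especially the edges between $A_i$ and $B_{i+2}$ or $T_{i+2}$. These are not determined by $C_4$-freeness alone and require $P_6$/$C_6$ arguments together with repeated use of the no-clique-cutset condition. My first approach would be to prove that every $A_i$ is empty, by showing $N(A_i)$ would otherwise be a clique-cutset or would create a forbidden $K$. Once $A_i=\emptyset$, the graph is a $C_5$ blown up by cliques plus the sets $T_i$ and $F$, and the degree bookkeeping becomes routine enumeration.
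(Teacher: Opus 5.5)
Your plan has a genuine gap, and it sits in the step you propose as your first approach: proving that every $A_i$ is empty. Under the hypotheses of the theorem this is false. Only $G_4$ has the shape you describe: it is $C_5$ with each vertex replaced by a $K_3$, i.e.\ $A=B=F=\emptyset$ and $|T_i|=2$. The graphs $G_1,G_2,G_3$ arise precisely when $A\neq\emptyset$.

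For example, relative to $v_1\cdots v_5$, $G_1$ has:
\begin{itemize}
\item $A_1=\{a_1\}$, and $|T_1|=5$ with $T_1$ complete to $a_1$;
\item $T_2=T_5=F=\emptyset$;
\item $B_3$ a $2$-clique complete to $\{a_1\}\cup T_3\cup T_4$;
\item $|T_3|=|T_4|=2$, with $T_3$ complete to $T_4$.
\end{itemize}
Equivalently, start from the $7$-vertex graph $Q$ formed by the $5$-cycle $x_1v_2x_3x_4v_5$ plus two vertices $a$ (adjacent to $x_1,b$) and $b$ (adjacent to $a,x_3,x_4$). Then $G_1$ is obtained by substituting cliques of sizes $6,1,3,3,1,1,2$, where $x_1=\{v_1\}\cup T_1$, $x_3=\{v_3\}\cup T_3$, $x_4=\{v_4\}\cup T_4$ and $b=B_3$.

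A direct check shows that $Q$ has no induced $P_6$, $C_4$ or $C_6$, and no clique whose deletion disconnects it. So $G_1$ is $(P_6,C_4,C_6)$-free with no clique-cutset, all its degrees are $8$ or $9$, and it contains none of your five forbidden graphs. Yet two adjacent twins never lie on a common induced $C_5$. Hence every induced $C_5$ of $G_1$ lies over one of the three induced $5$-cycles $x_1v_2x_3x_4v_5$, $x_1abx_3v_2$, $x_1abx_4v_5$ of $Q$. These leave $a$, $v_5$, $v_2$, respectively, with exactly one neighbour on the cycle.

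So no argument can yield $A=\emptyset$, whatever $C_5$ you fix. The case $A\neq\emptyset$ must be analysed head-on, and it produces three of the four exceptional graphs. The paper splits into three cases: $A\neq\emptyset$, giving $G_1$--$G_3$ or a forbidden $K$; $A=\emptyset\neq B$, giving a forbidden $K$; and $A=B=\emptyset$, giving $G_4$ or $K_4\vee 2K_2$.

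The second missing ingredient is a mechanism that forces edges between classes. Sizes do not determine these edges, so ``for each size vector, check the forced adjacencies'' is not yet an argument. The paper's engine is this: since all degrees lie in $\{8,9\}$, $N[x]\subseteq N[y]$ forces $|N[y]\setminus N[x]|\le 1$.
\begin{itemize}
\item Comparing $a\in A_i$ with $v_i$ shows that $a$ has a neighbour in $B_{i+2}$. From there, $C_6$-freeness, the forbidden graphs and further degree comparisons empty every $A_j$ with $j\neq i$ and every $B_j$ with $j\neq i+2$.
\item Comparing $b\in B_i$ with $v_i$ and $v_{i+1}$ (when $A_{i-2}=\emptyset$, and using the absence of clique-cutsets) shows that $b$ is complete to $(B_i\setminus\{b\})\cup T_{i-1}\cup T_i\cup T_{i+1}$ or to $(B_i\setminus\{b\})\cup T_i\cup T_{i+1}\cup T_{i+2}$.
\item Comparing a vertex of $F$ with a suitable $v_j$ gives $F=\emptyset$ outright. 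Your bound $|F|\le 1$ leaves $|F|=1$ open.
\end{itemize}
Carrying this through when $A_1\neq\emptyset$ pins down $|A_1|=1$, $|T_1|=5$, $T_2=T_5=\emptyset$, $2\le|B_3|\le 3$ and $1\le|T_3|,|T_4|\le 2$. These are exactly $G_1,G_2,G_3$.

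Two of your Step~2 facts are also unavailable. First, $B_i$ need not be a clique a priori: two non-adjacent vertices of $B_i$ together with $v_i,v_{i+1}$ induce a diamond, not a $C_4$. Second, nothing local stops a vertex of $Z$ from having a neighbour in some $T_i$, so your separation argument for $Z=\emptyset$ fails. Use instead the known fact that every induced $C_5$ is dominating in a $(P_6,C_4)$-free graph without clique-cutset.
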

Finally, we settle the latter case, that is, when a $(P_6,C_4)$-free graph $G$ contains an induced $C_6$ by using a structural result obtained by Karthik and Maffray in~\cite{kartikandmaffray}. This result together with Theorem~\ref{thm-P6C4C6(copy)} implies Theorem~\ref{thm-mainthm}.

\subsection{Organization of the paper}

The paper is organized as follows: In Section~\ref{sec-2}, we define additional terminology and state some preliminary results. In Subsection~\ref{sec-str}, we discuss the general structure of any $(P_6, C_4)$-free graph around an induced $C_5$. For a better understanding, Section~\ref{sec-3} is divided into several subsections describing the structure of any graph $G$ that satisfies the hypothesis of Theorem~\ref{thm-p6c4c6characterization}. Then in Subsection~\ref{subsec-proofofthm3and4}, we prove Theorem~\ref{thm-P6C4C6(copy)} and Theorem~\ref{thm-p6c4c6characterization}. Section~\ref{sec-4} is dedicated to the proof of Theorem~\ref{thm-mainthm} and is divided into three subsections. In the first two subsections, we study the structure of any minimal counterexample to Theorem~\ref{thm-mainthm}. Then in Subsection~\ref{subsec-proofofthm-2}, we conclude our work by proving Theorem~\ref{thm-mainthm}.

\section{Preliminaries and known results}\label{sec-2}

Let $G$ be a graph. For a vertex $x\in V(G)$, the (\emph{open}) \emph{neighborhood} and \emph{closed neighborhood} of $x$ in $G$ are denoted by $N(x)$ and $N[x]$, respectively. Let $S$ be a subset of $V(G)$. The (\emph{open}) \emph{neighborhood} of $S$ in $G$ is the set $N(S)=\{v\in V(G)\setminus S\mid N(v)\cap S\neq \emptyset\}$ and $N[S]=N(S)\cup S$ is the \emph{closed neighborhood} of $S$ in $G$.  We use $G[S]$ to denote the subgraph of $G$ induced by the vertices of $S$ in $G$. 

For two disjoint sets $A,B\subseteq V(G)$, $[A,B]$ denotes the set of all edges in $E(G)$ with one endpoint in $A$ and the other in $B$. We say that $[A,B]$ is \emph{complete} (or $A$ is \emph{complete} to $B$) if every vertex of $A$ is adjacent to every vertex of $B$ in $G$. We say that $[A,B]$ is \emph{anticomplete} (or $A$ is \emph{anticomplete} to $B$) if $[A,B]=\emptyset$. Note that if any of $A$ and $B$ is an empty set, then $[A,B]$ is both complete and anticomplete. If $A = \{a\}$, then we simply write $a$ is complete (or anticomplete) to $B$ instead of $\{a\}$ is complete (or anticomplete) to $B$.

 An \emph{odd hole} and an \emph{odd antihole} in $G$ are induced subgraphs isomorphic to $C_n$ and $\overline{C_n}$, respectively where $n$ is odd and $n\geq 5$. For two vertex-disjoint graphs $G_1$ and $G_2$, we write $G_1\cup G_2$ to denote the graph with vertex set $V(G_1)\cup V(G_2)$ and edge set $E(G_1)\cup E(G_2)$. The \emph{join} of two vertex-disjoint graphs $G_1$ and $G_2$, denoted by $G_1\vee G_2$, is the graph obtained by adding all possible edges between $V(G_1)$ and $V(G_2)$ in $G_1\cup G_2$.

For a positive integer $k$, a graph $G$ is said to be \emph{$k$-vertex-critical} if $\chi(G)=k$ and for every proper subgraph $H$ of $G$, $\chi(H) < k$. When it is not required to specify $k$, we simply write \emph{vertex-critical} to denote a $k$-vertex-critical graph. In the following lemma, we state two well-known properties of any vertex-critical graph.

 \begin{lemma}[\cite{cbkcvertexcritical}]\label{lem-mindegnocliquectset}
 	Let $G$ be a vertex-critical graph. Then the following hold.
 	\begin{enumerate}
 		\item $G$ does not contain any clique-cutset.
 		\item $\delta(G)\geq \chi(G)-1$.
 		\end{enumerate}
 \end{lemma}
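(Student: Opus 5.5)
Both parts are classical, and I will argue each directly from the definition of vertex-criticality. Let $k=\chi(G)$. I will use only one consequence of criticality: for every vertex $v$, the graph $G-v$ is a proper induced subgraph of $G$, so $\chi(G-v)\le k-1$. The same holds for $G-S$ with $S$ nonempty, since deleting vertices does not increase the chromatic number.

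For part 2, fix a vertex $v$ and a $(k-1)$-coloring $c$ of $G-v$. Suppose $d(v)\le k-2$. Then the neighbors of $v$ use at most $k-2$ of the $k-1$ colors. Some color in $[k-1]$ is therefore missing from $N(v)$, and assigning it to $v$ extends $c$ to a $(k-1)$-coloring of $G$. This contradicts $\chi(G)=k$. Hence $d(v)\ge k-1$ for every $v$, so $\delta(G)\ge \chi(G)-1$.

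For part 1, I first note that a vertex-critical graph is connected. Otherwise each component is a proper subgraph, hence $(k-1)$-colorable, and combining these colorings would $(k-1)$-color $G$.

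Now suppose $X$ is a clique-cutset. Then $G-X$ has at least two components. Let $A$ be the vertex set of one component and $B$ the union of the others. Set $G_1=G[A\cup X]$ and $G_2=G[B\cup X]$. Both are proper induced subgraphs of $G$, since $B$ and $A$ are nonempty, so each has a $(k-1)$-coloring, say $c_1$ and $c_2$. Because $X$ is a clique, both colorings give the vertices of $X$ pairwise distinct colors. Permuting the color names in $c_2$, I arrange that $c_2(x)=c_1(x)$ for all $x\in X$. There are no edges between $A$ and $B$, so the union of $c_1$ and $c_2$ is a proper $(k-1)$-coloring of $G$, again a contradiction.

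The empty clique causes no trouble. If $X=\emptyset$, then $X$ being a cutset would mean $G$ is disconnected, which was already excluded. Nothing here is a real obstacle; the only point needing care is the color permutation, which is valid precisely because $X$ is a clique.
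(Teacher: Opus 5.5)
Your proof is correct. The paper does not prove this lemma itself but cites it from the literature, and your two arguments are exactly the standard proofs of these facts: extending a $(k-1)$-coloring of $G-v$ when $d(v)\le k-2$, and gluing $(k-1)$-colorings of $G[A\cup X]$ and $G[B\cup X]$ after permuting colors so they agree on the clique $X$ (they also go through under the paper's wording of vertex-critical via proper subgraphs).
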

A \emph{list assignment} $L$ of a graph $G$ is a mapping that assigns to each vertex $v \in V(G)$ a set $L(v)$ of colors. The graph $G$ is said to be \emph{$L$-colorable} if there exists a proper coloring $c$ of $G$ such that $c(v)\in L(v)$ for every $v \in V(G)$. For a function $f : V(G)\to \mathbb{N}$, an \emph{$f$-assignment} is a list assignment $L$ of $G$ such that $|L(v)| = f(v)$ for every $v\in V(G)$. A graph $G$ is called \emph{$f$-choosable} if it is $L$-colorable for every $f$-assignment $L$ of $G$. In particular, if $G$ is $f$-choosable with $f(v) = d_G(v) - 1$ for every $v \in V(G)$, the graph $G$ is called \emph{$d_1$-choosable}. The following result implies that any vertex-critical counterexample to the Borodin-Kostochka conjecture cannot contain a $d_1$-choosable subgraph.

\begin{lemma}[\cite{d1choosable}]\label{lem-nod1choosable}
If $G$ is a $\Delta$-vertex-critical graph, then  $G$ cannot contain any non-empty $d_1$-choosable induced subgraph $H$. So such a graph $G$ contains none of the following graphs: (i) $K_3\vee P_4$, (ii) $K_4\vee S$, where $S$ is any spanning subgraph of $C_4$, (iii) $K_2\vee C_5$, (iv) $K_2\vee \textit{kite}$. (see Figure~\ref{figforbidden})
		\end{lemma}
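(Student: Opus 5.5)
The plan has two parts. First, I prove the general statement that a $\Delta$-vertex-critical graph has no nonempty $d_1$-choosable induced subgraph. Second, I check that each of the listed graphs is $d_1$-choosable. Throughout, "$\Delta$-vertex-critical" means vertex-critical with $\chi(G)=\Delta(G)=\Delta$.

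\emph{General part.} Suppose $H$ is a nonempty induced subgraph of $G$ that is $d_1$-choosable. Since $G$ is vertex-critical and $H$ is nonempty, $G-V(H)$ is a proper induced subgraph, so it has a $(\Delta-1)$-coloring $c$. For $v\in V(H)$, let $L(v)$ be the set of colors in $[\Delta-1]$ not used by $c$ on $N(v)\setminus V(H)$. The vertex $v$ has at most $d_G(v)-d_H(v)\le \Delta-d_H(v)$ neighbors outside $H$. Hence
\[
|L(v)|\ \ge\ (\Delta-1)-(\Delta-d_H(v))\ =\ d_H(v)-1 .
\]
Shrink each list to size exactly $d_H(v)-1$. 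By $d_1$-choosability, $H$ has an $L$-coloring, and together with $c$ it gives a proper $(\Delta-1)$-coloring of $G$. This contradicts $\chi(G)=\Delta$.

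\emph{Checking the listed graphs.} For each graph $K\in\{K_3\vee P_4,\ K_4\vee S,\ K_2\vee C_5,\ K_2\vee\textit{kite}\}$, with $S$ a spanning subgraph of $C_4$, I show that $K$ is $L$-colorable whenever $|L(v)|=d_K(v)-1$. I would use one reduction repeatedly.

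\emph{Reduction.} Suppose two nonadjacent vertices $u,w$ (necessarily both in the non-clique side) have a common color $\alpha$. Color both with $\alpha$ and delete $\alpha$ from every other list. Each remaining vertex loses at most one color. A remaining vertex adjacent to both $u$ and $w$ loses one color but also loses two colored neighbors. In particular every clique vertex ends up with list size at least its remaining degree plus one, so it can be colored last greedily. It then remains to color the leftover part of the small side, which is a path or a few vertices. Its lists comfortably exceed its degrees after discounting the clique vertices, because clique vertices can always be colored last.

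\emph{No shared colors.} Otherwise the lists of every nonadjacent pair are disjoint. The union of the lists is then large, and I would find a system of distinct representatives via Hall's theorem, coloring all vertices with distinct colors. Hall's condition should follow because the small side has at least two independent vertices with disjoint lists of size at least $2$ or $3$. This pushes the union of any large subfamily above its cardinality, while small subfamilies are handled by the list sizes themselves. For instance, in $K_4\vee 4K_1$ every list has size at least $3$, and the $K_4$ lists have size $6$.

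I expect the main obstacle to be the case analysis in the reduction step for $K_3\vee P_4$ and $K_2\vee\textit{kite}$. There, an endpoint of degree $4$ has only $3$ colors, and after coloring a nonadjacent pair the residual path may need its own order argument. If this becomes messy, I would instead apply the Alon--Tarsi theorem: find an orientation with out-degrees at most $d_K(v)-2$ whose even and odd Eulerian subgraph counts differ. This is a finite check for these small graphs. Alternatively, for these specific graphs one can simply cite the verification in~\cite{d1choosable}.
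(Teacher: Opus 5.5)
Your first part is correct and is the standard argument behind this lemma: you color $G-V(H)$ with $\Delta-1$ colors, note that each $v\in V(H)$ keeps at least $(\Delta-1)-(\Delta-d_H(v))=d_H(v)-1$ colors, and extend by $d_1$-choosability. You rightly use that $\Delta$ is both $\chi(G)$ and the maximum degree. The paper proves nothing here; it cites \cite{d1choosable} for both the general fact and the list (i)--(iv).

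The gap is in your own verification that the listed graphs are $d_1$-choosable, specifically the Reduction step, which has an off-by-one error. After $u,w$ receive the common color $\alpha$, a clique vertex $c$ has at least $d_K(c)-2$ available colors and exactly $d_K(c)-2$ uncolored neighbors. So its list is at least its remaining degree, not its remaining degree plus one. Coloring $c$ last from a list of size $d_K(c)-1$ needs two savings in $N(c)$, and one merged pair gives only one.

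The failure is real. In $K_3\vee P_4$ with clique $c_1c_2c_3$ and path $p_1p_2p_3p_4$, let $L(c_i)=\{1,2,3,4,\alpha\}$, $L(p_2)=L(p_3)=\{1,2,3,\alpha\}$, $L(p_1)=\{\alpha,5,6\}$, and $L(p_4)=\{\alpha,7,8\}$. Merging $p_1,p_4$ on $\alpha$ leaves $\{c_1,c_2,c_3,p_2,p_3\}$, an induced $K_5$ whose lists all lie in $\{1,2,3,4\}$, so your procedure dead-ends. Yet an $L$-coloring exists: $p_1=p_3=\alpha$, $p_4=7$, $p_2=1$, and $2,3,4$ on the clique. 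So the choice of pair matters, and for each graph the residual needs a real argument (a second repeated color, or a color outside some clique vertex's list), not the claim that clique vertices can always be colored last.

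Your Hall case does go through once checked graph by graph. For the full vertex set you must use a nonadjacent pair whose list sizes sum to at least $|V(K)|$, e.g. sizes $4$ and $3$ in $K_2\vee\textit{kite}$, not a pair of sizes $2$ and $3$. The Alon--Tarsi fallback, however, is only a sufficient condition, so it proves nothing until you exhibit orientations and verify them. As written, (i)--(iv) rest solely on your last resort of citing \cite{d1choosable}, which is also all the paper does.
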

	
For a positive integer $k$, we define $\mathcal{C}_k = \{G\mid \Delta(G) = \chi(G) = k \ \text{and} \ \omega(G) < k\}$. Note that $\mathcal{C}_1=\mathcal{C}_2=\emptyset$ and $\mathcal{C}_k\neq \emptyset$ for $3\le k \le 8$~\cite{clawfree}. Also, note that if $G$ is a counterexample to the Borodin-Kostochka conjecture, then we have $\Delta(G)\geq 9$, $\omega(G)< \Delta(G)$, and $\chi(G) = \Delta(G)$. Clearly, for $k\geq 9$, the set $\mathcal{C}_k$ consists of the counterexamples to the Borodin-Kostochka conjecture. The following result was independently established by Kostochka and Catlin.
	
\begin{theorem}[\cite{catlin,maxdegree29}]\label{theorem 3}
Let $\mathcal{F}$ be a hereditary class of graphs. If $\mathcal{C}_k\cap \mathcal{F} = \emptyset$ for $k\geq 5$, then $\mathcal{C}_{k+1}\cap \mathcal{F} = \emptyset$.
\end{theorem}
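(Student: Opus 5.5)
The plan is to argue by contradiction. We take a graph in $\mathcal{C}_{k+1}\cap\mathcal{F}$, pass to a critical induced subgraph, and delete a suitable independent set so that what remains lies in $\mathcal{C}_k\cap\mathcal{F}$.

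\textbf{Reduction to a critical graph.} Suppose $G\in\mathcal{C}_{k+1}\cap\mathcal{F}$ with $k\ge 5$. Let $H$ be a $(k+1)$-vertex-critical induced subgraph of $G$; since $\mathcal{F}$ is hereditary, $H\in\mathcal{F}$. Then $\chi(H)=k+1$, $\omega(H)\le\omega(G)\le k$ and $\Delta(H)\le k+1$. By Lemma~\ref{lem-mindegnocliquectset}, $\delta(H)\ge k$. If $\Delta(H)\le k$, Brooks' Theorem (Theorem~\ref{thm-Brooks}, applicable since $k\ge 3$) would give $\chi(H)\le k$, which is impossible. Hence $\Delta(H)=k+1$ and $H\in\mathcal{C}_{k+1}$.

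\textbf{The key step.} We want an independent set $I$ of $H$ that meets every vertex of degree $k+1$ and every clique of size $k$ in $H$. Granting such an $I$, set $H'=H-I\in\mathcal{F}$. Then:
\begin{itemize}
\item $\Delta(H')\le k$, since $I$ meets every vertex of degree $k+1$ in $H$;
\item $\omega(H')\le k-1$, since $I$ meets every $k$-clique;
\item $\chi(H')\ge \chi(H)-1=k$, since $I$ is independent.
\end{itemize}
If $\Delta(H')\le k-1$, Brooks' Theorem (with $k-1\ge 4\ge 3$ and $\omega(H')\le k-1$) would give $\chi(H')\le k-1$, a contradiction. So $\Delta(H')=k$. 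Since $\omega(H')\le k-1<k$, Brooks' Theorem also gives $\chi(H')\le k$, so $\chi(H')=k$. Thus $H'\in\mathcal{C}_k\cap\mathcal{F}$, contradicting the hypothesis.

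\textbf{Constructing $I$.} I would use a Lov\'asz-type partition argument. Among all partitions $V(H)=V_1\cup V_2$, choose one minimizing $e(V_1)+e(V_2)$ (weighting $e(V_1)$ heavily if needed), with the aim that $V_1$ is independent. By the standard Lov\'asz argument with $d_1=0$, $d_2=k$ and $\Delta=k+1=d_1+d_2+1$, every vertex has at most $d_i$ neighbours in its own part $V_i$ in an optimal partition. So $I=V_1$ is independent, and every vertex of $V_2$ has at most $k$ neighbours in $V_2$. This gives the degree condition.

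For the clique condition, suppose a $k$-clique $Q$ lies entirely in $V_2$. Every $v\in Q$ has degree $k$ or $k+1$ in $H$ and already has $k-1$ neighbours inside $Q$, so it has at most two neighbours outside $Q$.

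\textbf{Main obstacle.} The hardest step will be the local exchange that repairs such a $Q$. The move is to put some $v\in Q$ into $V_1$ and push its (at most two) $V_1$-neighbours back into $V_2$, while keeping the edge count non-increasing and not creating a new violation. To control the interactions, I first record the structure of overlapping $k$-cliques. Degree counting shows that two distinct $k$-cliques sharing $t$ vertices force a shared vertex of degree at least $2k-t-1$. So with $\Delta=k+1$ and $k\ge 5$ they must share at least $k-2$ vertices, and $\omega\le k$ then restricts the possible configurations severely.

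I would then refine the extremal choice: minimize the number of $k$-cliques contained in $V_2$ as a secondary criterion after the Lov\'asz potential. The goal is to show that an uncovered $k$-clique always yields a strictly better partition. The hypothesis $k\ge 5$ is what makes the counting of outside-neighbours of $Q$ work; this is exactly the delicate part of the Catlin/Kostochka argument, and the rest is the bookkeeping above.
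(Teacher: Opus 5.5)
The paper gives no proof of this statement; it is quoted from Catlin and Kostochka. Your outer reduction is the usual one and is correct: pass to a $(k+1)$-critical induced subgraph $H$, delete an independent set $I$ with $\Delta(H-I)\le k$ and $\omega(H-I)\le k-1$, and apply Brooks' theorem twice. The degree condition needs no Lov\'asz partition. With $d_1=0$ the usual potential $d_2e(V_1)+d_1e(V_2)$ degenerates anyway. Any independent set extends to a maximal one, and a maximal independent set $I$ automatically gives $\Delta(H-I)\le k$. So the whole theorem reduces to one lemma: \emph{a graph with $\Delta\le k+1$, $\omega\le k$ and $k\ge 5$ has an independent set meeting every $k$-clique.}

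That lemma is the gap, and it is the entire content of the theorem. You explicitly defer it as ``the delicate part of the Catlin/Kostochka argument'', and the exchange you sketch does not establish it. When $v\in Q$ enters $I$, its (at most two) $I$-neighbours, which lie outside $Q$, must leave. Each of them may be the only $I$-vertex of another $k$-clique, so your secondary count of $k$-cliques inside $V_2$ need not decrease. Re-covering those cliques forces further swaps, and nothing in the proposal controls this cascade or shows it terminates. A global argument is needed. You already have the right first ingredient, namely that intersecting $k$-cliques share at least $k-2$ vertices. One way to continue:
\begin{itemize}
\item For $k\ge5$, if $Q_1\cap Q_2\neq\emptyset\neq Q_2\cap Q_3$, then $|Q_1\cap Q_2\cap Q_3|\ge k-4\ge 1$. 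Hence ``intersecting'' is an equivalence relation on $k$-cliques.
\item One checks that each class has union $U$ with $|U|\le k+2$. It also has a core contained in every clique of the class, of size $k$, $k-1$, or at least $k-2$ according as $|U|=k$, $k+1$, $k+2$.
\item Each core vertex is adjacent to all of $U$, so it has at most $k+2-|U|$ neighbours outside $U$. In particular, cores of classes with $|U|=k+2$ have no outside neighbours at all.
\item Picking one vertex per core independently is then an independent-transversal problem with parts of size at least $k-1\ge 4$ and cross-degree at most $2$. Haxell's theorem (parts of size at least $2d$ in a graph of maximum degree $d$) settles it.
\item Extending this transversal to a maximal independent set gives the required $I$.
\end{itemize}
Without this step, or Catlin's original argument in its place, the proposal does not prove the statement.
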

We use the following result about the counterexamples to the Borodin-Kostochka conjecture in a hereditary graph class frequently in our work. This result has appeared in several papers implicitly~\cite{p5gem,imprvmntofbrooks,p5c4} and for the sake of completeness, we provide a brief proof of it.
	
	\begin{lemma}\label{lem-cntrbkc}
		For a hereditary graph class $\mathcal{F}$, if $\mathcal{F}\cap \mathcal{C}_k\neq \emptyset$ for some $k\geq 9$, then there exists an imperfect vertex-critical graph $G\in \mathcal{F}\cap \mathcal{C}_9$ with $|d(u)-d(v)|\leq 1$ for any $u,v\in V(G)$. Moreover, if $\mathcal{F}$ is the class of $(P_6,C_4)$-free graphs, then $G$ contains an induced $C_5$.
	\end{lemma}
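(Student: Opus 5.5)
We first reduce from $\mathcal{C}_k$ to $\mathcal{C}_9$ using Theorem~\ref{theorem 3}, then extract a vertex-critical subgraph and check its degrees and imperfection.

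\textbf{Reduction to $\mathcal{C}_9$.} Suppose $\mathcal{F}\cap\mathcal{C}_k\neq\emptyset$ for some $k\ge 9$, and let $k_0$ be the least such $k$. If $k_0>9$, then $\mathcal{C}_{k_0-1}\cap\mathcal{F}=\emptyset$ with $k_0-1\ge 5$. Theorem~\ref{theorem 3} then gives $\mathcal{C}_{k_0}\cap\mathcal{F}=\emptyset$, a contradiction. Hence $k_0=9$, and we may fix $H\in\mathcal{F}\cap\mathcal{C}_9$.

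\textbf{Extracting a vertex-critical subgraph.} Let $G$ be a minimal induced subgraph of $H$ with $\chi(G)=9$; it is $9$-vertex-critical. Since $\mathcal{F}$ is hereditary, $G\in\mathcal{F}$. We also have $\Delta(G)\le\Delta(H)=9$ and $\omega(G)\le\omega(H)<9$.

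\textbf{Degree condition.} By Lemma~\ref{lem-mindegnocliquectset}(2), $\delta(G)\ge\chi(G)-1=8$. Brooks' Theorem~\ref{thm-Brooks} rules out $\Delta(G)\le 8$: if $\Delta(G)\le 8$, then either $\Delta(G)\ge 3$ and $\omega(G)\le\Delta(G)$, giving $\chi(G)\le\Delta(G)\le 8$, or $\Delta(G)<3$ and trivially $\chi(G)\le 3$. Either way $\chi(G)<9$, a contradiction. So $\Delta(G)=9$, which gives $G\in\mathcal{C}_9$ and $8\le d(v)\le 9$ for every $v$, i.e. $|d(u)-d(v)|\le 1$.

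\textbf{Imperfection.} Since $\chi(G)=9>\omega(G)$, $G$ is imperfect.

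\textbf{The $(P_6,C_4)$-free case.} By the Strong Perfect Graph Theorem, $G$ contains an odd hole or an odd antihole.
\begin{itemize}
\item An odd hole $C_{2m+1}$ with $2m+1\ge 7$ contains an induced $P_6$, so any odd hole in $G$ is a $C_5$.
\item The antihole $\overline{C_5}$ is $C_5$.
\item For odd $n\ge 7$, $\overline{C_n}$ contains an induced $C_4$: with cycle vertices $v_1,\dots,v_n$, the vertices $v_1,v_2,v_4,v_5$ induce $2K_2$ in $C_n$, and the complement of $2K_2$ is $C_4$.
\end{itemize}
Hence $G$ contains an induced $C_5$.

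The only step needing care is applying Theorem~\ref{theorem 3} by minimality. Everything else is routine bookkeeping.
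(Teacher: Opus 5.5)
Your proof is correct and uses the same ingredients as the paper's: Theorem~\ref{theorem 3}, Lemma~\ref{lem-mindegnocliquectset}, Brooks' theorem, and the Strong Perfect Graph Theorem. The only difference is the order of the minimality step: the paper takes a minimum-order member of $\mathcal{F}\cap\mathcal{C}_9$ and uses Brooks to show it is vertex-critical, whereas you take a vertex-critical induced subgraph of an arbitrary member and use $\delta(G)\ge 8$ plus Brooks to show it still has $\Delta(G)=9$. Both routes work, and you also make explicit two points the paper leaves implicit: the least-$k$ induction behind the appeal to Theorem~\ref{theorem 3}, and the $2K_2$ witness showing $C_4\subseteq\overline{C_n}$.
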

	
	\begin{proof}
	Let $\mathcal{F}$ be a hereditary graph class such that $\mathcal{F}\cap\mathcal{C}_k\neq\emptyset$ for some $k\geq 9$. By Theorem~\ref{theorem 3}, $\mathcal{F}\cap\mathcal{C}_9\neq\emptyset$. Let $G$ be a graph in $\mathcal{F}\cap \mathcal{C}_9$ with minimum number of vertices. Clearly, $\omega(G)< 9$ and $\chi(G)=\Delta(G)=9$. First, we show that $G$ is vertex-critical. For the sake of contradiction, assume that $\chi(G-{v}) = 9$ for some vertex $v\in V(G)$. If $\Delta(G - v) = 9$, then since $\omega(G-v)\le \omega(G)< 9$, we have $G-v\in \mathcal{F}\cap \mathcal{C}_9$ contradicting the fact that $G$ has minimum number of vertices. So $\Delta(G - v) < 9$. Now since $\chi(G-v)=9$ and $\chi(G-v) \leq \Delta(G-v) + 1$, we have $\Delta(G-v)=8$. Then by Brooks' theorem, $\omega(G-v) > 8$, a contradiction. Hence, $G$ is a vertex-critical graph. Also, $G$ is an imperfect graph since $\chi(G)>\omega(G)$. By Lemma~\ref{lem-mindegnocliquectset}, for every vertex $v\in V(G)$, $8=\chi(G)-1 \leq d(v)\leq \Delta(G)= 9$. This also implies that $|d(u)-d(v)|\leq 1$ for any two vertices $u,v\in V(G)$. Now assume that $\mathcal{F}$ is the class of $(P_6,C_4)$-free graphs. Then by the Strong Perfect Graph Theorem~\cite{strngperfectgraph}, $G$ contains an odd hole or an odd antihole. Now since $G$ is $(P_6, C_4)$-free, it does not contain any odd hole or odd antihole of length at least $7$. So $G$ must contain an induced $C_5$.
\end{proof}

The \emph{blowup} of a graph $G$ is a graph obtained by replacing each vertex of $G$ with a clique (possibly empty) in which two cliques are complete (resp. anticomplete) to each other if the corresponding vertices in $G$ are adjacent (resp. non-adjacent). 

\begin{lemma}[\cite{oncliqueseparators}]\label{lem-C5C6aredominating}
Let $G$ be a $(P_6,C_4)$-free graph that has no clique-cutset. Then the following statements hold.
\begin{enumerate}
\item Every induced $C_5$ in $G$ is dominating.
\item If $G$ contains an induced $C_6$ which is not dominating, then $G$ is the join of a complete graph and a blowup of the Petersen graph.
\end{enumerate}
\end{lemma}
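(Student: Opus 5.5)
The plan is to analyse how vertices attach to a short hole and then use the absence of a clique-cutset to force domination; since this result is quoted from~\cite{oncliqueseparators}, this is the route I would follow to reprove it. For part 1, let $C=c_1c_2c_3c_4c_5$ be an induced $C_5$, with indices taken modulo $5$, and let $A=N(C)$.

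The first step classifies the vertices of $A$. Let $x\in A$. If $x$ is adjacent to $c_i$ and $c_{i+2}$ but not to $c_{i+1}$, then $xc_ic_{i+1}c_{i+2}$ is an induced $C_4$. So $N(x)\cap V(C)$ is a set of cyclically consecutive vertices. If it has exactly four vertices, say $c_i,\dots,c_{i+3}$, then $xc_ic_{i+4}c_{i+3}$ is an induced $C_4$. Hence every $x\in A$ is of one of the following types:
\begin{itemize}
\item type~1: $N(x)\cap V(C)=\{c_i\}$;
\item type~2: $N(x)\cap V(C)=\{c_i,c_{i+1}\}$;
\item type~3: $N(x)\cap V(C)=\{c_{i-1},c_i,c_{i+1}\}$;
\item type~5: $x$ is complete to $C$.
\end{itemize}

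Now suppose $R=V(G)\setminus N[C]\neq\emptyset$. Let $D$ be a component of $G[R]$ and put $S=N(D)\subseteq A$. Every $s\in S$ has a neighbour $d\in D$. If $s$ is of type~1 at $c_i$, then $d\,s\,c_i\,c_{i+1}\,c_{i+2}\,c_{i+3}$ is an induced $P_6$. If $s$ is of type~2 at $c_i,c_{i+1}$, then $d\,s\,c_{i+1}\,c_{i+2}\,c_{i+3}\,c_{i+4}$ is an induced $P_6$. Therefore every vertex of $S$ is of type~3 or type~5.

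Any two vertices of $S$ have a common neighbour on $C$, because two $3$-intervals of a $5$-cycle always intersect. Consequently, if $d\in D$ had two nonadjacent neighbours $s,t\in S$ with common neighbour $c\in C$, then $s\,d\,t\,c$ would be an induced $C_4$. So $N(d)\cap S$ is a clique for every $d\in D$.

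Since $S$ separates $D$ from $C$ and $G$ has no clique-cutset, $S$ is not a clique. Choose nonadjacent $s,t\in S$ and a shortest path $P=s\,d_1\cdots d_k\,t$ with all interior vertices in $D$; by the previous paragraph $k\ge 2$. Let $c$ be a common neighbour of $s,t$ on $C$. Then $s\,P\,t\,c$ is an induced cycle of length $k+3\ge 5$.
\begin{itemize}
\item If $k\ge 3$, this cycle has length at least $6$, so either it contains an induced $P_6$ or, when $k=3$, it is a hole of length $6$. In the latter case I would replace $c$ by a vertex $c'$ of $C$ adjacent to exactly one of $s,t$; this exists unless both are of type~5, and if both are type~5 one uses $c$ and $c_{i+2}$ to extend.
\item If $k=2$, the induced $C_5$ $s\,d_1\,d_2\,t\,c$ must be extended using a vertex of $C$ adjacent to $s$ but not to $t$, or the reverse. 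Such a vertex exists when $s,t$ are type~3 vertices with different centres. This produces a $P_6$ of the form $d_2\,d_1\,s\,c_j\,c_{j+1}\,\cdots$.
\end{itemize}
The degenerate situations are when $s$ and $t$ have the same neighbourhood on $C$ (for instance both type~5, or both type~3 with the same centre). There I would choose the pair $\{s,t\}$ and the path $P$ more cleverly, minimising $k$ over all nonadjacent pairs in $S$, and use the $C_5$ itself to route a $P_6$ through $c_{i+2}c_{i+3}$. This case bookkeeping is the step I expect to be the main obstacle of part~1.

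For part 2, let $H=h_1\cdots h_6$ be an induced $C_6$ and let $z\notin N[H]$. First I would classify the neighbourhoods on $H$ of vertices in $N(H)$ exactly as above: $C_4$-freeness forbids adjacency to $h_i,h_{i+2}$ without $h_{i+1}$, among other patterns, and this leaves only a few types. $P_6$-freeness then restricts which types can have neighbours outside $N[H]$; I expect these to be the vertices adjacent to exactly the two opposite vertices $h_i,h_{i+3}$. Taking the component of $z$ and again using the absence of a clique-cutset, I would show that the vertices of $N(H)$ and $V(G)\setminus N[H]$ group into cliques playing the roles of the four non-$C_6$ vertices of the Petersen graph. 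Here one uses that the Petersen graph is $H$ together with three vertices attached to the opposite pairs $\{h_i,h_{i+3}\}$ and one vertex adjacent to those three.

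The next step is to prove that each of these ten classes is a clique and that each pair of classes is complete or anticomplete according to Petersen adjacency. Any deviation should produce a $C_4$, a $P_6$, or a clique-cutset. Finally, the vertices complete to everything form a clique joined to the rest, which gives the join of a complete graph with a Petersen blowup. Verifying that no further vertices exist, and pinning down the homogeneity of each blob, is the long case analysis I expect to be the main obstacle of part~2.
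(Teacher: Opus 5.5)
\textbf{Part~2 is a genuine gap.} The paper does not prove this lemma; it quotes it from \cite{oncliqueseparators}, so your proposal has to stand on its own. For part~2 it gives a description of the conclusion, not an argument. The attachment types to $H=h_1\cdots h_6$ are left as ``among other patterns''. Every structural step is covered only by ``any deviation should produce a $C_4$, a $P_6$, or a clique-cutset''. These steps are: the ten classes are cliques, their adjacencies follow the Petersen graph, $V(G)\setminus N[H]$ is a single clique, no other vertices occur, and vertices complete to $H$ are universal. None of them is actually argued.

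Your one concrete prediction is also incomplete. If $x\in N(H)$ has a neighbour outside $N[H]$, then $C_4$- and $P_6$-freeness leave three possibilities for $N(x)\cap V(H)$: an opposite pair $\{h_i,h_{i+3}\}$, all of $V(H)$, or four consecutive vertices $h_i,\dots,h_{i+3}$. The last type does not occur in the claimed final structure, and no local check removes it. To exclude it you would need three steps:
\begin{enumerate}
\item Show that such an $x$, and every opposite-pair vertex of $N(Z)$, is complete to each component $Z$ of $G-N[H]$ that it meets. This uses $P_6$'s such as $d'\,d\,x\,h_{i+3}\,h_{i+4}\,h_{i+5}$.
\item Deduce that $x$ is adjacent to every $y\in N(Z)$ with trace $\{h_{i+1},h_{i+4}\}$, so that $y\,h_{i+4}\,h_{i+3}\,x$ is an induced $C_4$. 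The same holds symmetrically for trace $\{h_{i+2},h_{i+5}\}$.
\item Conclude that $N(Z)$ is then a clique, contradicting the hypothesis.
\end{enumerate}
Arguments of this kind are needed at every step of part~2, and none is present.

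\textbf{Part~1 is essentially right, but you misplace the difficulty.} Your claim for $k=3$ is false: a vertex of $C$ adjacent to exactly one of $s,t$ can fail to exist not only when both are of type~5, but also when both are of type~3 with the same centre. Yet no case distinction on centres is needed. The $P_6$ you wrote, $d_2\,d_1\,s\,c_{i+1}\,c_{i+2}\,c_{i+3}$ for $s$ of type~3 centred at $c_i$, does not involve $t$ at all.

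Apply it to an edge $d_1d_2$ of the connected set $D$ with $d_1\in N(s)$ and $d_2\notin N(s)$. This shows directly that every type-3 vertex of $S$ is complete to $D$. Now take nonadjacent $s,t\in S$. If both are of type~5, then $s\,c_1\,t\,c_3$ is an induced $C_4$. Otherwise one of them, say $s$, is complete to $D$. Then a neighbour $d\in D$ of $t$ and a common neighbour $c\in C$ of $s$ and $t$ give the induced $C_4$ $s\,d\,t\,c$. Hence $S$ is a clique, with no shortest paths or minimisation required.

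You should also assume $G$ is connected, or that $\emptyset$ counts as a clique-cutset of a disconnected graph. With the paper's definition (``more components than $G$''), $C_5\cup K_1$ has no clique-cutset but has a non-dominating $C_5$.
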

		
\begin{lemma}[\cite{p6}]\label{lem-blowupptrsen}
If $G$ is a blowup of the Petersen graph, then $G$ satisfies the Borodin–Kostochka conjecture.
	\end{lemma}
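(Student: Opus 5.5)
Since a blowup of the Petersen graph $P$ is again a blowup of $P$ after deleting any vertex (deleting a vertex only shrinks one of the cliques), the class $\mathcal{F}$ of blowups of the Petersen graph is hereditary. I will argue by contradiction via Lemma~\ref{lem-cntrbkc}. If $\mathcal{F}\cap\mathcal{C}_k\neq\emptyset$ for some $k\ge 9$, there is a vertex-critical $G\in\mathcal{F}\cap\mathcal{C}_9$ with $8\le d(x)\le 9$ for all $x$, $\omega(G)\le 8$ and $\chi(G)=9$. It then suffices to produce an $8$-coloring of $G$.

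\textbf{Translating the hypotheses into weights.} Write $G$ as $P$ with each vertex $v$ replaced by a clique $Q_v$ of size $n_v\ge 0$. For $x\in Q_v$ we have $d(x)=n_v-1+s_v$, where $s_v=\sum_{u\in N_P(v)} n_u$. So $n_v+s_v\in\{9,10\}$ whenever $n_v>0$. Because $P$ is triangle-free, $\omega(G)=\max_{uv\in E(P)}(n_u+n_v)\le 8$. A proper coloring of $G$ corresponds to covering the weight function $n$ by a multiset of independent sets of $P$. Hence $\chi(G)\le 8$ is equivalent to the weighted chromatic number of $(P,n)$ being at most $8$.

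\textbf{Bounding the weights.} Summing $n_v+s_v\le 10$ over the vertices with $n_v>0$ gives $N=\sum_v n_v\le 25$ once the support is large. I would first handle a degenerate support separately: if some $Q_v$ is empty, then $G$ is a blowup of a proper induced subgraph of $P$. Such a subgraph is either bipartite-like or has small structure, and I expect it to be directly colorable or to contain a clique-cutset, contradicting Lemma~\ref{lem-mindegnocliquectset}.

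Next I would use Lemma~\ref{lem-nod1choosable} to forbid large cliques next to nonempty neighbours. If $n_v\ge 4$ and the neighbouring cliques together contain an induced $K_2\cup 2K_1$, $2K_2$ or $4K_1$, we obtain a forbidden $K_4\vee S$. Similarly, $n_v\ge 2$ together with an induced $C_5$ through neighbours would give a forbidden $K_2\vee C_5$. I expect this to force $n_v\le 3$ for most $v$, or otherwise very restricted local configurations.

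\textbf{Constructing the coloring.} Finally, I would build an explicit $8$-coloring. $P$ has exactly five independent sets of size $4$, and each vertex lies in exactly two of them. Using each of them once or twice, and then patching with independent sets of size $3$ and $2$, gives a covering of $n$. With $N\le 25$ and $\alpha(P)=4$, eight colour classes have capacity $32$, so counting alone does not obstruct the covering. The main obstacle is turning this count into an actual covering. The unequal weights force a case analysis on the multiset $\{n_v\}$, using the constraints $n_u+n_v\le 8$ on edges and $n_v+s_v\in\{9,10\}$.

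For this case analysis I would first try a greedy argument. Pick an independent $4$-set containing a vertex of maximum remaining weight, decrement its weights, and check that the invariants (edge weights at most the number of remaining colours, and local degree conditions) are preserved. When this fails, I would fall back to the small residual cases and verify directly that one of them yields a $d_1$-choosable induced subgraph.
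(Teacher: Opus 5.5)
You have set up the reduction correctly. Blowups of the Petersen graph $P$ (cliques possibly empty) form a hereditary class, and Lemma~\ref{lem-cntrbkc} gives a vertex-critical $G\in\mathcal{C}_9$ in it. Showing $\chi(G)\le 8$ is then equivalent to covering the weights $n_v$ by eight independent sets of $P$, subject to $n_v+s_v\in\{9,10\}$ on the support and $n_u+n_v\le 8$ on edges.

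The gap is the decisive step: you never show that every such weight vector is covered by eight independent sets or leads to a contradiction.
\begin{itemize}
\item The count $32\ge N$ proves nothing.
\item Your greedy invariant ``edge weights at most the number of remaining colours'' does not guarantee that the colouring can be completed. Every independent set of $P$ meets a $5$-cycle $C$ of $P$ in at most two vertices, so $\lceil w(C)/2\rceil$ colours are needed on $C$. For example, weight $2$ on every vertex of a $5$-cycle satisfies the invariant with $4$ colours left but needs $5$.
\item The degree condition is not preserved in any fixed form: deleting a maximum independent set lowers $n_v+s_v$ by $1$ on the set and by $2$ off it.
\item The fallback to $d_1$-choosable subgraphs cannot rescue this. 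In a blowup of the triangle-free graph $P$, the common neighbourhood of a clique $K$ with $|K|\ge 2$ is either a clique, or $Q_v\setminus K$ joined to at most three pairwise anticomplete cliques. Either way it is $P_4$-free and has independence number at most $3$.
\end{itemize}
Consequently $K_3\vee P_4$, $K_2\vee C_5$, $K_2\vee\textit{kite}$ and $K_4\vee 4K_1$ never occur, so your $K_2\vee C_5$ remark is vacuous. The only members of Lemma~\ref{lem-nod1choosable} that can occur are $K_4\vee 2K_2$ and $K_4\vee(K_2\cup 2K_1)$, and each forces some $n_v\ge 4$. Weight vectors with all $n_v\le 3$ satisfy all your constraints; for example, take $n=3$ on a maximum independent set and $n=2$ elsewhere. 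For such vectors the contradiction must come entirely from an explicit $8$-colouring, and the proposal does not supply one.

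The degenerate-support case is also not handled. Suppose exactly one clique $Q_{v_0}$ is empty, so $G$ is a blowup of $P-v_0$.
\begin{itemize}
\item $P-v_0$ is not bipartite: it contains induced $C_5$'s.
\item It has no clique cutset. The only $3$-vertex cuts of $P$ are vertex neighbourhoods, which are independent, so removing $v_0$ together with both ends of an edge never disconnects $P$.
\item Hence Lemma~\ref{lem-mindegnocliquectset} gives nothing here, and this case needs the same colouring analysis as the full-support case.
\item Your bound $N\le 25$ relied on full support. With $Q_{v_0}=\emptyset$ the degree sum only gives $4N-s_{v_0}\le 90$.
\end{itemize}

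What is missing is a complete case analysis of the admissible weight vectors that exhibits a cover by eight independent sets. One convenient setting is the Kneser model $K(5,2)$. There the maximal independent sets are the five stars and ten triangles of $K_5$, and the degree condition reads $n_{ij}+w(\text{triangle on }[5]\setminus\{i,j\})\in\{9,10\}$. The paper itself gives no proof to compare against: it quotes this lemma from \cite{p6}.
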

			
\subsection{Structural framework around an induced $C_5$}\label{sec-str}
 
Let $G$ be a $(P_6,C_4)$-free graph such that $G$ has no clique-cutset and $G$ has an induced $C_5$, say $\mathcal{C} = \{v_1, v_2, v_3, v_4, v_5\}$ with the edge set $\{v_1v_2, v_2v_3, v_3v_4, v_4v_5, v_5v_1 \}$. By Lemma~\ref{lem-C5C6aredominating}, $\mathcal{C}$ is dominating. For $i\in [5]$, we define the following sets of vertices and use modulo $5$ arithmetic for the indices.
 
 \begin{equation}
 	\begin{aligned}
 		A_i &= \{x \in V(G) \mid N(x) \cap \mathcal{C} = \{v_i\} \}, \\
 		B_i &= \{x \in V(G) \mid N(x) \cap \mathcal{C} = \{v_i, v_{i+1}\} \}, \\
 		D_i &= \{x \in V(G) \mid N(x) \cap \mathcal{C} = \{v_{i-1}, v_i, v_{i+1}\} \}, \\
 		R   &= \{x \in V(G) \mid N(x) \cap \mathcal{C} = \mathcal{C} \}.
 	\end{aligned}
 	\tag{$\mathcal{P}$}\label{partition}
 \end{equation}
 
 Let $A = \cup_{i=1}^5 A_i$, $B =\cup_{i=1}^5 B_i$, and $D = \cup_{i=1}^5 D_i$. Note that for an induced $P_3$ $uvw$ in $G[\mathcal{C}]$ and a vertex $x\in V(G)\setminus \mathcal{C}$, if $x$ is adjacent to $u$ and $w$, then it is also adjacent to $v$; otherwise, $\{x,u,v,w\}$ induces a $C_4$ in $G$. So, $N(x)\cap \mathcal{C}$ induces a path on at most 3 vertices or a $C_5$. Then, since $\mathcal{C}$ is dominating in $G$,   every $x\in V(G)\setminus\mathcal{C}$ belongs to one of the sets $A$, $B$, $D$, and $R$. Hence, $V(G)$ is partitioned as $\mathcal{C}\cup A\cup B\cup D\cup R$.

For all $i\mod 5$, we have the following structural properties of $G$. These properties have already been proved in~\cite{kartikandmaffray}  except $(\mathbb{O}\ref{O4})$ with different set notations. So here we only give a short proof of $(\mathbb{O}\ref{O4})$.
 \begin{enumerate}
 \obsitem \label{O1} $D_i$ is a clique and $D_i$ is anticomplete to $D_{i+2}\cup D_{i-2}$.

 \obsitem \label{O2} $A_i$ is complete to $A_{i-2}\cup A_{i+2}$.

\obsitem \label{O3} $A_i$ is complete to $D_i$ and anticomplete to $D_{i-2}\cup D_{i+2}\cup A_{i+1}\cup A_{i-1}\cup B_i\cup B_{i-1}$.

 \obsitem \label{O4} $B_i$ is complete to $B_{i+1}\cup B_{i-1}$. Also, $B_i = \emptyset$ or $A_{i-1}\cup A_{i+2} = \emptyset$.
  \vspace{-0.5\baselineskip} 
 
  \begin{proof}
  \renewcommand{\qedsymbol}{}
The first assertion was proved in~\cite{kartikandmaffray}. For the second assertion, let $b_i\in B_i$. Suppose, to the contrary, that $A_{i-1}\cup A_{i+2}\neq \emptyset$. By symmetry, let $a_{i+2}\in A_{i+2}$. If $b_ia_{i+2}\in E(G)$, then $\{b_i, v_{i+1}, v_{i+2}, a_{i+2}\}$ induces a $C_4$ in $G$. Conversely, if $b_ia_{i+2}\notin E(G)$, then $\{b_i, v_i, v_{i-1}, v_{i-2}, v_{i+2}, a_{i+2}\}$ induces a $P_6$ in $G$. So $(\mathbb{O}\ref{O4})$ holds.
 \end{proof}		
 		
\obsitem \label{O5} $B_i$ is anticomplete to $D_{i-2}\cup B_{i-2}\cup B_{i+2}$.
 	
 \obsitem \label{O6} Every vertex of $B_i$ is anticomplete to $D_{i-1}$ or $D_{i+2}$.

\obsitem \label{O7} $R$ is a clique and is complete to $D$.
\end{enumerate}
\vspace{-0.3cm}
Additionally, if $G$ is $C_6$-free, then the following results hold. 
\vspace{-0.2cm}
\begin{enumerate}	
\obsitem \label{O2b} $A_i = \emptyset$ or  $A_{i+2}\cup A_{i-2} = \emptyset$.

 \obsitem \label{O4b} $B_i=\emptyset$ or $B_{i+1}\cup B_{i-1}=\emptyset$.
\end{enumerate}

\section {The Borodin-Kostochka conjecture on $(P_6, C_4, C_6)$-free graphs}\label{sec-3}

In this section, we give the proofs of Theorem~\ref{thm-P6C4C6(copy)} and Theorem~\ref{thm-p6c4c6characterization}. 
To prove the structural result stated in Theorem~\ref{thm-p6c4c6characterization}, we consider a graph $G$ satisfying the hypothesis of Theorem~\ref{thm-p6c4c6characterization} and prove a series of lemmas describing the structure of $G$.

Let $G$ be a $(P_6, C_4, C_6)$-free graph such that $8\leq d(x)\leq 9$ for every $x\in V(G)$. Furthermore, let $G$ be an imperfect graph and let $G$ do not contain any clique-cutset. Then by the Strong Perfect Graph Theorem~\cite{strngperfectgraph}, $G$ contains an odd hole or odd antihole. Since every odd hole of length at least $7$ contains an induced $P_6$ and every odd antihole of length at least $7$ contains an induced $C_4$, $G$ contains an induced $C_5$. By Lemma~\ref{lem-C5C6aredominating}, every induced $C_5$ in $G$ is dominating. Let $\mathcal{C} = \{v_1, v_2, v_3, v_4, v_5\}$ induce a $C_5$ in $G$ with the edge set $\{v_1v_2, v_2v_3, v_3v_4, v_4v_5, v_5v_1 \}$. We partition the vertex set $V(G)$ as $\mathcal{C} \cup A\cup B\cup D\cup R$, where the sets are as defined in~(\ref{partition}). Note that $G$ satisfies $(\mathbb{O}\ref{O1})-(\mathbb{O}\ref{O4b})$. Now we prove the following results. 

	\begin{lemma}\label{lem-Rmpty}
	The set $R$ is empty.
	\end{lemma}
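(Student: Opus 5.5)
The plan is to use the standing assumption, implicit in the setting of Theorem~\ref{thm-p6c4c6characterization}, that $G$ contains none of the graphs $K\in\{K_3\vee P_4, K_2\vee C_5, K_4\vee (K_2\cup 2K_1), K_4\vee 2K_2, K_4\vee 4K_1\}$. If $G$ contains one of them, the second alternative of that theorem already holds. I do not see how to prove the lemma without this assumption.

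\textbf{Step 1: $|R|\le 1$.} By $(\mathbb{O}\ref{O7})$, $R$ is a clique. If $r,r'\in R$ are distinct, they are adjacent, and $\{r,r'\}\cup\mathcal{C}$ induces $K_2\vee C_5$, which is excluded. So suppose $R=\{r\}$ and aim for a contradiction. Put $M=V(G)\setminus N[r]$, the set of non-neighbours of $r$; note that $M\subseteq A\cup B$, since $r$ is complete to $\mathcal{C}\cup D$.

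\textbf{Step 2: $M=\emptyset$.} Suppose not, and let $M'$ be the vertex set of a component of $G[M]$. Then $N(M')\subseteq N(r)$. I will show that $N(M')$ is a clique; it then separates $M'$ from $r$ and is a clique-cutset, contradicting the hypothesis.

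Take nonadjacent $u,w\in N(M')$ and a shortest path $P$ from $u$ to $w$ whose interior lies in $M'$. Together with $r$, $P$ forms a hole of length at least $4$. $C_4$ and $C_6$ are excluded, and any hole of length at least $7$ contains an induced $P_6$. So the hole is a $C_5$ of the form $r u m_1 m_2 w$ with $m_1,m_2\in M'$.

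The obstacle is ruling out this configuration. Here is what I would try first. By Lemma~\ref{lem-C5C6aredominating}, every vertex of $\mathcal{C}$ has a neighbour on $C'=\{r,u,m_1,m_2,w\}$, and each $v_j$ is adjacent to $r$. Then check how $m_1,m_2$, which lie in $A\cup B$, attach to $\mathcal{C}$, using $(\mathbb{O}\ref{O2})$–$(\mathbb{O}\ref{O4b})$ together with $C_4$-freeness. I expect that either some $v_j$ sees both $u$ and $w$ but neither $m_1$ nor $m_2$, producing a $C_4$ $v_j u m_1 \cdots$ or a $C_6$; or that two consecutive vertices of $\mathcal{C}$ are complete to $C'$, producing $K_2\vee C_5$. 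If this direct approach stalls, I would instead recompute the partition~(\ref{partition}) with respect to $C'$: there $r$ and the $v_j$ play the role of vertices in $D$ or $R$, and the anticompleteness relations $(\mathbb{O}\ref{O1})$ and $(\mathbb{O}\ref{O5})$ should force a forbidden adjacency.

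\textbf{Step 3: $r$ universal gives a contradiction.} Now $N[r]=V(G)$ and $d(r)\le 9$, so $|V(G)|\le 10$. Every other vertex has degree at least $8$, so in $G-r$ it has degree at least $7$ among at most $9$ vertices; hence it has at most $1$ non-neighbour in $G-r$. Thus $\overline{G-r}$ has maximum degree at most $1$, i.e. it is a matching. Since $G\supseteq\mathcal{C}$ induces a $C_5$, $\overline{G-r}$ contains the complement of a $C_5$, which is again a $C_5$ and not a matching; this is a contradiction. Therefore $R=\emptyset$.
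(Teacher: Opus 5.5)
Your Step~2 has a genuine gap, and it is where the real work lies. You reduce to ruling out an induced $C_5$ of the form $r\,u\,m_1\,m_2\,w$ with $m_1,m_2\in M$, and then only say what you ``expect''. Worse, the routes you propose cannot succeed. They use only structural facts: $P_6$-, $C_4$-, $C_6$-freeness, no clique-cutset, $(\mathbb{O}\ref{O2})$--$(\mathbb{O}\ref{O4b})$, and exclusion of the listed graphs $K$. These facts do not exclude the configuration.

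Take the $C_5$ $v_1\cdots v_5$, a vertex $r$ complete to it, and vertices $a,b$ with $N(a)=\{v_1,b\}$ and $N(b)=\{v_3,v_4,a\}$. This 8-vertex graph has the following properties:
\begin{itemize}
\item it is $(P_6,C_4,C_6)$-free and has no clique-cutset;
\item $\omega=3$, so it contains none of the graphs $K$;
\item $r\in R$, $M=\{a,b\}$, and $N(M)=\{v_1,v_3,v_4\}$ is not a clique;
\item $r\,v_1\,a\,b\,v_3$ is exactly the induced $C_5$ you need to exclude.
\end{itemize}
Your first expected dichotomy also fails here: $v_2$ sees $u=v_1$ and $w=v_3$ but neither $a$ nor $b$, and this gives the $C_5$ $v_2\,v_1\,a\,b\,v_3$, not a $C_4$ or $C_6$. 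The only hypothesis that excludes such examples is the degree condition $8\le d\le 9$, and you do not use it until Step~3.

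The missing idea is to compare $d(r)$ with $d(v_j)$ directly. By $(\mathbb{O}\ref{O7})$, $N[r]\supseteq \mathcal{C}\cup D\cup R$, while $N[v_j]\subseteq \{v_{j-1},v_j,v_{j+1}\}\cup A_j\cup B_{j-1}\cup B_j\cup D\cup R$. So if $A_j\cup B_{j-1}\cup B_j=\emptyset$ for some $j$, then $N[v_j]\subseteq N[r]\setminus\{v_{j+2},v_{j-2}\}$. This gives $d(r)\ge d(v_j)+2$, contradicting $8\le d\le 9$.

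Such a $j$ always exists:
\begin{itemize}
\item If $B_i\neq\emptyset$, then $(\mathbb{O}\ref{O4})$ and $(\mathbb{O}\ref{O4b})$ give $A_{i-1}\cup A_{i+2}\cup B_{i-1}\cup B_{i+1}=\emptyset$. They also make one of the consecutive sets $B_{i+2},B_{i-2}$ empty. Hence $j=i+2$ or $j=i-1$ works.
\item If $B=\emptyset$, then $(\mathbb{O}\ref{O2b})$ forces some $A_j=\emptyset$.
\end{itemize}

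This is the paper's proof. It needs neither the exclusion of the graphs $K$ nor the clique-cutset hypothesis, and it makes your Steps~1 and~3 (correct as written) unnecessary. Your extra $K$-freeness assumption also weakens the lemma relative to how it is used. It is invoked unconditionally later, e.g.\ in Lemma~\ref{lem-prprtiesfrB}, which assumes no $K$-freeness, and again in Lemma~\ref{lem-prprtiesofA}(a), whose argument does not assume $K_2\vee C_5$-freeness.
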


\begin{proof} 
For the sake of contradiction, assume that $R$ is non-empty and $x\in R$. By $(\mathbb{O}\ref{O7})$, we have $D\cup R\cup \mathcal{C} \subseteq N[x]$.
For any $i\in [5]$, if $B_i\neq \emptyset$, then by $(\mathbb{O}\ref{O4})$ and $(\mathbb{O}\ref{O4b})$, we have $|d(x)-d(v_{i-1})|\geq 2$ or $|d(x)-d(v_{i+2})|\geq 2$, a contradiction. Then we have $B=\emptyset$. Then by $(\mathbb{O}\ref{O2b})$, for some $i\in [5]$, we have $v_i\in \mathcal{C}$ such that $|d(x)-d(v_i)|\geq 2$, a contradiction. This completes the proof of Lemma~\ref{lem-Rmpty}.
\end{proof}

\subsection{Properties of $B$-vertices}\label{subsec-prprtiesofB}

In this subsection, we study the properties of the vertices in the subset $B$ of $V(G)$.

\begin{lemma}\label{lem-prprtiesfrB} Fix $i\in [5]$. If $B_i\neq \emptyset$ and $A_{i-2}=\emptyset$, then the following hold.
	\begin{enumerate}
\item Every vertex of $B_i$ has a neighbor in $D_{i-1}$ or $D_{i+2}$ but not in both the sets.

            \vspace{0.1cm}
\item If $b_i\in B_i$ has a neighbor in $D_{i-1}$, then 
    \begin{itemize}
    	 \item $A_i = \emptyset$,
        \item $b_i$ is complete to $(B_i\setminus\{b_i\}) \cup D_{i-1} \cup D_i \cup D_{i+1}$, 
        \item $D_{i-1}$ is complete to $D_i \cup D_{i-2}$. 
         \end{itemize}
         
  \item If $b_i\in B_i$ has a neighbor in $D_{i+2}$, then 
       \begin{itemize}
       	 \item $A_{i+1} = \emptyset$,
        \item $b_i$ is complete to  $(B_i\setminus \{b_i\}) \cup D_{i} \cup D_{i+1} \cup D_{i+2}$, 
        \item $D_{i+2}$ is complete to $D_{i+1} \cup D_{i-2}$. 
       \end{itemize}
       
\item $|B_i|\leq 2$.	
	\end{enumerate}
\end{lemma}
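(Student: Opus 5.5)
The plan is to pin down $N(b_i)$ completely using the observations, then get contradictions from three sources: the absence of a clique-cutset, induced $C_4/C_6/P_6$, and the degree window $8\le d\le 9$.

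\emph{Step 0: locating $N(b_i)$.} Fix $b_i\in B_i$. By $(\mathbb{O}\ref{O3})$, $A_i$ and $A_{i+1}$ are anticomplete to $B_i$. By $(\mathbb{O}\ref{O4})$, $A_{i-1}\cup A_{i+2}=\emptyset$, and $A_{i-2}=\emptyset$ by hypothesis, so $b_i$ has no neighbour in $A$. By $(\mathbb{O}\ref{O4b})$, $B_{i\pm1}=\emptyset$. By $(\mathbb{O}\ref{O5})$, $b_i$ has no neighbour in $B_{i\pm2}\cup D_{i-2}$. Also $R=\emptyset$ by Lemma~\ref{lem-Rmpty}. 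Hence
\[
N(b_i)\subseteq \{v_i,v_{i+1}\}\cup (B_i\setminus\{b_i\})\cup D_{i-1}\cup D_i\cup D_{i+1}\cup D_{i+2}.
\]
I would record the following adjacency facts.
\begin{itemize}
\item $B_i$ is a clique: two non-adjacent $b,b'\in B_i$ would give the induced $C_4$ $b\,v_i\,b'\,v_{i+1}$.
\item $D_i$ and $D_{i+1}$ are cliques by $(\mathbb{O}\ref{O1})$.
\item Any $d\in N(b_i)\cap D_i$ and $d'\in N(b_i)\cap D_{i+1}$ are adjacent. Otherwise $b_i\,d\,v_{i-1}\,v_{i-2}\,v_{i+2}\,d'$ is an induced $C_6$ (checking chords via the definitions of $D_i$, $D_{i+1}$, $B_i$).
\item $v_i$ and $v_{i+1}$ are complete to $B_i\cup D_i\cup D_{i+1}$.
\end{itemize}

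\emph{Part 1.} If $b_i$ had no neighbour in $D_{i-1}\cup D_{i+2}$, the facts above show $N(b_i)$ is a clique. It does not contain $v_{i+3}$, so it is a clique-cutset separating $b_i$ from $v_{i+3}$, a contradiction. Having neighbours in both $D_{i-1}$ and $D_{i+2}$ is excluded directly by $(\mathbb{O}\ref{O6})$.

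\emph{Parts 2 and 3.} These are symmetric under the reflection $j\mapsto 2i+1-j$, which swaps $D_{i-1}\leftrightarrow D_{i+2}$, $A_i\leftrightarrow A_{i+1}$ and $D_i\leftrightarrow D_{i+1}$, and fixes $B_i$ and the hypothesis $A_{i-2}=\emptyset$. So I only treat a neighbour $d\in D_{i-1}$ of $b_i$.
\begin{itemize}
\item For $A_i=\emptyset$: take $a\in A_i$ and exploit the short paths $a\,v_i\,b_i\,d$ and $a\,v_i\,d\,v_{i-2}$. Depending on whether $ad\in E(G)$, I expect either an induced $C_4$ (e.g. through $a,d,b_i,v_{i+1}$ combined with $v_i$) or an induced $P_6$ extending $a\,v_i\,b_i$ via $v_{i+1}\,v_{i+2}\,v_{i+3}$.
\item For completeness of $b_i$ to $D_{i-1}\cup D_i\cup D_{i+1}\cup(B_i\setminus\{b_i\})$: for a non-neighbour $x$ in one of these sets, build an induced $C_4$ or $C_6$ through $d$, $b_i$, $x$ and $\mathcal{C}$. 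For example, $x\in D_{i-1}$ non-adjacent to $b_i$ gives $b_i\,d\,x$ plus $v_{i-2}$, $v_{i+2}$, $v_{i+1}$. I expect the cases $x\in D_{i+1}$ and $x\in B_i$ to also need $(\mathbb{O}\ref{O6})$ applied to the other $B_i$-vertex.
\item For $D_{i-1}$ complete to $D_i\cup D_{i-2}$: a non-adjacent pair $d'\in D_{i-1}$, $y\in D_i\cup D_{i-2}$ together with $b_i$ (now complete to $D_{i-1}\cup D_i$) and cycle vertices should give an induced $C_4$ or $C_6$. If that fails, I fall back on degrees: compare $d(v_i)$ or $d(v_{i-1})$ with $d(b_i)$, since their neighbourhoods are now almost determined and must differ by at most one.
\end{itemize}

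\emph{Part 4.} Suppose $|B_i|\ge3$. By Part 1, two of these vertices, say $b,b'$, have neighbours on the same side, say $D_{i-1}$. By Part 2, $b,b'$ and $v_i$ then all have neighbourhoods inside the set $\{v_{i-1},v_i,v_{i+1}\}\cup B_i\cup D_{i-1}\cup D_i\cup D_{i+1}$, and this set is nearly a clique. I would count degrees to force a large clique containing $B_i\cup D_{i-1}\cup D_i\cup\{v_i\}$. I then aim for a contradiction either with the degree of $v_i$ being at most $9$, or with a forbidden structure such as a clique-cutset of the form $N(b)\setminus\{\cdots\}$.

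I expect Part 4, together with the completeness of $D_{i-1}$ to $D_{i-2}$ in Part 2, to be the main obstacle, since these rely on a delicate degree count rather than a single forbidden induced subgraph.
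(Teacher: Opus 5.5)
Your proposal has a genuine gap: it tries to get from forbidden induced subgraphs several facts that only follow from the degree window $8\le d(x)\le 9$.

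\textbf{What fails.}
\begin{enumerate}
\item Your proof that $B_i$ is a clique is wrong. Since $v_iv_{i+1}\in E(G)$, the set $\{b,v_i,b',v_{i+1}\}$ induces a diamond, not a $C_4$. In fact $\mathcal{C}$ plus two non-adjacent vertices of $B_i$ is $(P_6,C_4,C_6)$-free.
\item Even granting that $B_i$ is a clique, you never show that $N(b_i)\cap B_i$ is complete to $N(b_i)\cap(D_i\cup D_{i+1})$. So the claim in Part~1 that $N(b_i)$ is a clique is unsupported.
\item The induced $C_4$, $P_6$ or $C_6$ you expect in Part~2 do not exist. Take $\mathcal{C}$ together with $a\in A_i$, $b_i$, and a neighbour $d\in D_{i-1}$ of $b_i$. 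This graph is already $(P_6,C_4,C_6)$-free, whether or not $ad$ is an edge. Hence $A_i=\emptyset$ cannot be derived this way.
\item Your sample cycle $b_i\,d\,x\,v_{i-2}\,v_{i+2}\,v_{i+1}$ has the chord $dv_{i-2}$, because $d,x\in D_{i-1}$ have the same neighbours on $\mathcal{C}$.
\item Part~4 is only a plan, not an argument.
\end{enumerate}

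\textbf{The missing idea} is a closed-neighbourhood comparison with $|d(u)-d(v)|\le 1$. Suppose $b_i$ has no neighbour in $D_{i+2}$; this is automatic by $(\mathbb{O}6)$ once $b_i$ has a neighbour in $D_{i-1}$. Then $N[b_i]\subseteq\{v_i,v_{i+1}\}\cup B_i\cup D_{i-1}\cup D_i\cup D_{i+1}$. On the other hand, $N[v_i]\supseteq\{v_{i-1},v_i,v_{i+1}\}\cup A_i\cup B_i\cup D_{i-1}\cup D_i\cup D_{i+1}$. Hence $d(v_i)-d(b_i)\ge 1+|A_i|+m$, where $m$ counts the non-neighbours of $b_i$ in $(B_i\setminus\{b_i\})\cup D_{i-1}\cup D_i\cup D_{i+1}$. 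Since $|d(v_i)-d(b_i)|\le 1$, this gives $A_i=\emptyset$ and the full completeness of $b_i$ at once. After that, your $C_4$/$C_6$ argument for $D_{i-1}$ being complete to $D_i\cup D_{i-2}$ is fine.

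\textbf{Repairing Part~1.} Apply the same comparison with $v_i$ and with $v_{i+1}$. It forces $D_{i-1}=D_{i+2}=\emptyset$. Then every vertex of $B_i$ is in the same position, so $B_i$ is a clique complete to $D_i\cup D_{i+1}$. Your clique-cutset argument now goes through; the paper uses the cutset $\{v_i,v_{i+1}\}\cup D_i\cup D_{i+1}$.

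\textbf{Repairing Part~4.}
\begin{enumerate}
\item First show that $B_{i-2}$ is complete to $D_{i-1}$. A non-edge $b_{i-2}d'$ would give the induced $P_6$ $b_{i-2}\,v_{i-1}\,d'\,b_i\,v_{i+1}\,v_{i+2}$.
\item Then, using $A_{i-1}=B_{i-1}=R=\emptyset$ and Part~2, every $x\in D_{i-1}$ satisfies $N[v_{i-1}]\cup(N(D_{i-1})\cap B_i)\subseteq N[x]$.
\item So at most one vertex of $B_i$ has a neighbour in $D_{i-1}$, and symmetrically at most one has a neighbour in $D_{i+2}$. Part~1 then gives $|B_i|\le 2$.
\end{enumerate}
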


\begin{proof}
We only prove the lemma for $i = 1$ since the other cases are symmetric. Let $B_1\neq\emptyset$ and $A_4 = \emptyset$. Then by $(\mathbb{O}\ref{O4})$ and $(\mathbb{O}\ref{O4b})$, we have $B_2 \cup B_5 \cup A_3\cup A_5 = \emptyset$. By Lemma~\ref{lem-Rmpty}, we have $R = \emptyset$. So $V(G) = \mathcal{C}\cup A_1\cup A_2\cup B_1\cup B_3\cup B_4\cup D$. Note that by $(\mathbb{O}\ref{O1})$ and $(\mathbb{O}\ref{O5})$, $D_i$ is a clique and $D_i$ is anticomplete to $D_{i-2}\cup D_{i+2}\cup B_{i+2}$ for every $i\in[5]$.  

\smallskip
(a)  Let $b_1\in B_1$.  By $(\mathbb{O}\ref{O6})$, $b_1$ cannot have neighbors in both $D_5$ and $D_3$. Suppose, that $b_1$ has no neighbors in $D_5\cup D_3$. Then by $(\mathbb{O}\ref{O3})$ and $(\mathbb{O}\ref{O5})$, we have $N[b_1]\subseteq \{v_1, v_2 \}\cup B_1\cup D_1\cup D_2$. Clearly, $N[b_1]\cup A_1\cup D_5\cup \{v_5\}\subseteq N[v_1]$. Then since $|d(v_1) - d(b_1)|\leq 1$, we have $A_1\cup D_5 = \emptyset$ and $N[b_1] = \{v_1, v_2 \}\cup B_1\cup D_1\cup D_2$. Similarly, by using the fact that $|d(v_2) - d(b_1)|\leq 1$, we have $A_2\cup D_3 = \emptyset$. Since $D_3 = D_5 = \emptyset$, for every vertex $x\in B_1$, we have $N[x]\subseteq \{v_1, v_2\} \cup B_1\cup D_1\cup D_2$. Also, since $|d(v_1) - d(x)|\leq 1$, we have $N[x] = \{v_1, v_2 \} \cup B_1\cup D_1\cup D_2$. This implies that $B_1$ is a clique and $[B_1, D_1\cup D_2]$ is complete. Also, $[D_1, D_2]$ is complete; otherwise, $G[D_1\cup B_1\cup D_2\cup \{v_3, v_4, v_5\}]$ contains an induced $C_6$ in $G$. Then $D_1\cup D_2\cup \{v_1, v_2\}$ is a clique-cutset in $G$, a contradiction. So (a) holds.

\smallskip
(b) Suppose that $b_1\in B_1$ has a neighbor in $D_5$. By $(\mathbb{O}\ref{O3})$, $(\mathbb{O}\ref{O5})$, and $(\mathbb{O}\ref{O6})$, we have $N[b_1]\subseteq \{v_1, v_2 \} \cup B_1\cup D_1\cup D_2\cup D_5$. Clearly, $N[b_1]\cup A_1\cup \{v_5\} \subseteq N[v_1]$. Then, since $|d(v_1)- d(b_1)|\leq 1$, we have $A_1 = \emptyset$ and $N[b_1] = \{v_1, v_2 \} \cup B_1\cup D_1 \cup D_2 \cup D_5$ implying that $b_1$ is complete to $(B_1\setminus\{b_1\})\cup D_1\cup D_2 \cup D_5$. Now let $d_1\in D_1$, $d_5\in D_5$, and $d_4\in D_4$ be arbitrary. If $d_5d_1\notin E(G)$, then $\{d_5, b_1, d_1, v_5\}$ induces a $C_4$ in $G$; so $[D_5, D_1]$ is complete.  Also, if $d_5d_4\notin E(G)$, then $\{d_5, b_1, v_2, v_3, d_4, v_5\}$ induces a $C_6$ in $G$; so $[D_5, D_4]$ is complete. Hence (b) holds. 

\smallskip
(c) The proof is similar to that of (b).

\smallskip
(d) By (a), every vertex of $B_1$ has a neighbor in $D_5$ or $D_3$ but not in both the sets. Let $b_1\in B_1$ such that $b_1$ has a neighbor in $D_5$. By (b), $b_1$ is complete to  $D_5$, $A_1 = \emptyset$, and $[D_5, D_1\cup D_4]$ is complete. Let $b_4\in B_4$ and $d_5\in D_5$ such that $b_4d_5\notin E(G)$. By $(\mathbb{O}\ref{O5})$, we have $b_4b_1\notin E(G)$. Then $\{b_4, v_5, d_5, b_1, v_2, v_3\}$ induces a $P_6$ in $G$; so $[B_4, D_5]$ is complete. Also, since $D_5$ is a clique, for every $x\in D_5$, we have $N[v_5]\cup \{b_1\}\subseteq N[x]$. Then, since $|d(x) - d(v_5)|\leq 1$ and $b_1$ is complete to the set $D_5$, we have $N(D_5)\cap B_1 = \{b_1\}$. This implies that if $[B_1, D_5]\neq \emptyset$, then exactly one vertex of $B_1$ has neighbors in $D_5$. So $|N(D_5)\cap B_1|\leq 1$. Similarly, if there is some vertex $b_1^\prime \in B_1$ such that $b_1^\prime$ has a neighbor in $D_3$, then by using (c) and the same argument as above, we can show that $N(D_3)\cap B_1 = \{b_1^\prime\}$. This implies that if $[B_1, D_3]\neq \emptyset$, then exactly one vertex of $B_1$ has neighbors in $D_3$. So $|N(D_3)\cap B_1|\leq 1$. Since every vertex of $B_1$ must have a neighbor in either $D_5$ or $D_3$, it follows that $|B_1|\leq 2$. So (d) holds. 
	\end{proof}

\subsection{Properties of $A$-vertices}\label{subsec-prprtiesofA}

In this subsection, we study the properties of vertices in the subset $A$ of $V(G)$.

\begin{lemma}\label{lem-prprtiesofA}
Fix $i\in [5]$.	If $A_i\neq \emptyset$, then the following hold.
	\begin{enumerate}
		\item Every vertex of $A_i$ has a neighbor in $B_{i+2}$. Moreover, if $B_{i-1}\cup B_i\neq \emptyset$, then $G$ contains $K$ as an induced subgraph, where $K\in \{K_3\vee P_4, K_4\vee (K_2\vee 2K_1)\}$.
		\vspace{0.1cm}
		\item Let $a_i\in A_i$ and $b_{i+2}\in N(a_i)\cap B_{i+2}$. Then no vertex $x\in D_{i-1}\cup D_{i+1}$ is adjacent to exactly one of $a_i$ and $b_{i+2}$. Moreover, if $a_i$ has at least two neighbors in $D_{i-1}\cup D_{i+1}$, then $G$ contains an induced $K_2\vee C_5$.
		\vspace{0.1cm}
		\item $D_i$ is complete to $D_{i+1}\cup D_{i-1}$.
	\end{enumerate}
\end{lemma}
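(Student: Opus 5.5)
We work with $i=1$; the other cases are symmetric. Assume $A_1\neq\emptyset$ and fix $a_1\in A_1$. By $(\mathbb{O}\ref{O2b})$ we have $A_3\cup A_4=\emptyset$, and by Lemma~\ref{lem-Rmpty}, $R=\emptyset$. By $(\mathbb{O}\ref{O3})$, $a_1$ is complete to $D_1$ and anticomplete to $D_3\cup D_4\cup A_2\cup A_5\cup B_1\cup B_5$. Hence
\[
N(a_1)\subseteq\{v_1\}\cup A_1\cup B_2\cup B_3\cup B_4\cup D_1\cup D_2\cup D_5 .
\]
Two further restrictions hold.
\begin{itemize}
\item If $a_1$ is adjacent to some $b_2\in B_2$, then $\{a_1,b_2,v_2,v_1\}$ induces a $C_4$, because $a_1v_2\notin E(G)$ and $b_2v_1\notin E(G)$. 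So $a_1$ is anticomplete to $B_2$, and symmetrically to $B_4$.
\item By $(\mathbb{O}\ref{O4})$, $A_1\neq\emptyset$ forces $B_2=B_4=\emptyset$ anyway.
\end{itemize}
So $N(a_1)\subseteq\{v_1\}\cup A_1\cup B_3\cup D_1\cup D_2\cup D_5$.

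\textbf{Part (a).} The main tool is the degree condition $|d(a_1)-d(v_1)|\le 1$. We have $N[v_1]\supseteq\{v_1,v_2,v_5\}\cup A_1\cup B_1\cup B_5\cup D_1\cup D_2\cup D_5$. Suppose $a_1$ has no neighbor in $B_3$. Then $N[a_1]\subseteq N[v_1]\setminus\{v_2,v_5\}$, which gives $d(a_1)\le d(v_1)-2$, a contradiction. So $a_1$ has a neighbor in $B_3$.

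For the second assertion, suppose $B_5\cup B_1\neq\emptyset$. The same count now yields the stronger bound
\[
d(v_1)\ge d(a_1)+2+|B_1\cup B_5|-|N(a_1)\cap B_3|.
\]
By $(\mathbb{O}\ref{O4b})$ only one of $B_1,B_5$ is nonempty, say $B_1$. We then want to apply Lemma~\ref{lem-prprtiesfrB}: its hypothesis requires $A_4=\emptyset$, which we already have. That lemma makes a vertex $b_1\in B_1$ complete to large cliques among $D_5\cup D_1\cup D_2$ or $D_1\cup D_2\cup D_3$. The degree balance between $a_1$, $v_1$ and $b_1$ should then force the $D$-sets to be large and mutually complete. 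From this we assemble $K_3\vee P_4$, taking the $P_4$ along $a_1$ or $v_5,v_1,v_2$ with a complete triangle drawn from $D$-vertices; alternatively we assemble $K_4\vee(K_2\cup 2K_1)$. This case analysis is the main obstacle. I would first pin down exact neighborhoods via the degree equalities, and then search for the join structure.

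\textbf{Part (b).} Let $x\in D_5$. We check the possible adjacencies.
\begin{itemize}
\item Suppose $x\sim a_1$ and $x\not\sim b_3$. Then $b_3,v_3,v_4,x,a_1$ together with the edge $a_1b_3$ give an induced $C_5$ or longer. Combined with $v_2$ (or $v_4$), this should yield a $P_6$ or $C_4$/$C_6$; concretely, $a_1\,x\,v_4\,v_3\,b_3$ closes into a $C_5$, and adding $v_1$ or $v_2$ produces a forbidden configuration.
\item Suppose $x\sim b_3$ and $x\not\sim a_1$. Then $\{x,b_3,a_1,v_1\}$ induces a $C_4$, since $xv_1\in E(G)$ and $b_3v_1\notin E(G)$.
\end{itemize}
The case $x\in D_2$ is symmetric. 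For the second assertion, suppose $a_1$ has two neighbors $x,y\in D_5\cup D_2$. By the first assertion they are also neighbors of $b_3$. We then show $x\sim y$: either both lie in the same clique $D_j$, or we have $D_5$ versus $D_2$, which is a pair $D_j,D_{j+2}$ and would be anticomplete by $(\mathbb{O}\ref{O1})$. In that last situation, $x\,a_1\,y$ together with $v_3,v_4$ would give a $C_6$ or $C_4$, so that case must be excluded. This leaves $x,y$ both in $D_5$ or both in $D_2$, say $D_5$. Then $\{x,y\}$ is complete to the $5$-cycle $a_1\,b_3\,v_4\,v_5\,v_1$, giving $K_2\vee C_5$.

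\textbf{Part (c).} Let $d_1\in D_1$ and $d_2\in D_2$ be nonadjacent. If $a_1$ has a neighbor $b_3\in B_3$, then $d_1$ and $b_3$ are nonadjacent by $(\mathbb{O}\ref{O5})$. If $b_3\sim d_2$, then $\{a_1,d_1,v_2,d_2,b_3\}$ forms a cycle that contains a $C_4$ or an induced $C_5$/$C_6$. Otherwise, consider the path $d_2\,v_3\,b_3\,a_1\,d_1$ together with $v_5$, which should give an induced $P_6$ or $C_6$. Both outcomes are contradictions, so $D_1$ is complete to $D_2$, and symmetrically to $D_5$.
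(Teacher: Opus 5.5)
\textbf{Main gap: the second assertion of (a).} You give a plan there ("should then force\dots", "the main obstacle"), not an argument. The route you hint at, a $P_4$ through $a_1$ or $v_5v_1v_2$ driven by the degree balance of $a_1$ and $v_1$, is not what closes the case. Your first assertion of (a), the $C_4$ $\{x,b_3,a_1,v_1\}$ in (b), and the $K_2\vee C_5$ on $\{x,y\}$ joined to $a_1b_3v_4v_5v_1$ are all correct.

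The missing idea is to use $A_1\neq\emptyset$ through Lemma~\ref{lem-prprtiesfrB}, whose hypothesis $A_4=\emptyset$ holds. The argument runs as follows.
\begin{enumerate}
\item By Lemma~\ref{lem-prprtiesfrB}(b), a vertex $b_1\in B_1$ with a neighbor in $D_5$ forces $A_1=\emptyset$; equivalently, $N[b_1]\cup A_1\cup\{v_5\}\subseteq N[v_1]$. Hence $[B_1,D_5]=\emptyset$.
\item By Lemma~\ref{lem-prprtiesfrB}(a),(c), every $b_1\in B_1$ is then complete to $(B_1\setminus\{b_1\})\cup D_1\cup D_2\cup D_3$, and $D_3$ is complete to $D_2\cup D_4$.
\item Now $N[v_2]=N[b_1]\cup\{v_3\}$, using $A_2=B_2=\emptyset$, so $d(v_2)=9$.
\item $[B_3,D_3]$ is complete, since otherwise $b_3v_3d_3b_1v_1v_5$ is an induced $P_6$. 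Thus $N[v_3]\cup B_1\subseteq N[d_3]$, which gives $|B_1|=1$.
\item If $|D_2|\ge 2$, then two vertices of $D_2$ together with $v_2$, joined to the $P_4$ $v_1b_1d_3v_3$, form a $K_3\vee P_4$.
\item If $|D_1|\ge 3$, then $D_1\cup\{v_1,b_1,v_2,a_1,v_5\}$ contains a $K_4\vee(K_2\cup 2K_1)$.
\item Therefore $9=d(v_2)=2+|D_1|+|D_2|+|D_3|+|B_1|$ forces $|D_3|\ge 3$. Then $D_3$ joined to the $P_4$ $b_1v_2v_3v_4$ is a $K_3\vee P_4$.
\end{enumerate}
None of this appears in your proposal.

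\textbf{Wrong certificates in (b) and (c).} Several of the cycles you cite are induced $C_5$'s, which are not forbidden.
\begin{enumerate}
\item In (b), take $x\in D_5$ with $x\sim a_1$ and $x\not\sim b_3$. Your cycle $a_1xv_4v_3b_3$ has the chord $b_3v_4$. The contradiction is simply that $\{x,a_1,b_3,v_4\}$ induces a $C_4$.
\item In the second assertion of (b), with $x\in D_5$ and $y\in D_2$, the cycle $xa_1yv_3v_4$ is an induced $C_5$ and excludes nothing. Use instead $\{x,b_3,y,v_1\}$, which is an induced $C_4$; you had already noted $x,y\sim b_3$.
\item In (c), subcase $b_3\sim d_2$: the cycle $a_1d_1v_2d_2b_3$ is an induced $C_5$ when $a_1\not\sim d_2$. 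You need (b) to get $a_1\sim d_2$, after which $\{a_1,d_1,v_2,d_2\}$ is a $C_4$.
\item In (c), other subcase: if $a_1\sim d_2$, your six vertices do not form a $C_6$. The obstruction is the $C_4$ $\{d_2,a_1,b_3,v_3\}$.
\end{enumerate}
The paper avoids $B_3$ entirely in (c): $\{a_1,d_1,v_5,v_4,v_3,d_2\}$ induces a $P_6$ or a $C_6$ according to whether $a_1d_2\in E(G)$.
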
 

\begin{proof} 
	We prove this lemma for $i = 1$ since other cases are symmetric. Let $A_1\neq \emptyset$. Then by $(\mathbb{O}\ref{O4})$ and $(\mathbb{O}\ref{O2b})$, we have $B_2\cup B_4\cup A_3\cup A_4 = \emptyset$. By Lemma~\ref{lem-Rmpty}, we have $R = \emptyset$. Then $V(G) = \mathcal{C}\cup A_1\cup A_2\cup A_5\cup B_1\cup B_3\cup B_5\cup D$. Note that by $(\mathbb{O}\ref{O1})$ and $(\mathbb{O}\ref{O5})$, $D_i$ is a clique and $D_i$ is anticomplete to $D_{i-2}\cup D_{i+2}\cup B_{i+2}$ for every $i\in[5]$.
	
	\smallskip
	(a) Let $a_1\in A_1$. For the sake of contradiction, assume that $a_1$ has no neighbors in $B_3$. Then by $(\mathbb{O}\ref{O3})$, we have $N[a_1]\subseteq \{v_1\} \cup A_1\cup D_1 \cup D_2 \cup D_5$. Clearly, $N[a_1]\cup \{v_5, v_2\}\subseteq N[v_1]$. Then $|d(v_1) - d(a_1)|\geq 2$, a contradiction. So $a_1$ has a neighbor in $B_3$. 
    
Now since $A_1\neq \emptyset$, we have $B_3\neq \emptyset$. Then by $(\mathbb{O}\ref{O4})$, we have $A_2\cup A_5 = \emptyset$. For the sake of contradiction, assume that $B_1\cup B_5 \neq\emptyset$ and $G$ contains neither $K_3\vee P_4$ nor $K_4\vee (K_2\vee 2K_1)$ as an induced subgraph. By $(\mathbb{O}\ref{O4b})$, we have $B_1 = \emptyset$ or $B_5= \emptyset$. Without loss of generality, let $B_1\neq \emptyset$. Then $B_5 = \emptyset$ and we have $V(G) = A_1\cup B_1 \cup B_3\cup \mathcal{C} \cup D$. Let $b_1\in B_1$ be arbitrary. By $(\mathbb{O}\ref{O3})$ and $(\mathbb{O}\ref{O5})$, we have $N[b_1]\subseteq \{v_1, v_2\}\cup D_1\cup B_1\cup D_2\cup D_3\cup D_5$. By Lemma~\ref{lem-prprtiesfrB}(a), $b_1$ has a neighbor in $D_5$ or $D_3$ but not in both the sets.

 First assume that $b_1$ has a neighbor in $D_5$. Then $N(b_1)\cap D_3 = \emptyset$. By Lemma~\ref{lem-prprtiesfrB}(b), $b_1$ is complete to $D_1\cup (B_1\setminus\{b_1\})\cup D_2\cup D_5$. Then $N[b_1] = \{v_1, v_2\} \cup D_1\cup B_1\cup D_2\cup D_5$. Clearly, $N[b_1]\cup A_1\cup \{v_5\}\subseteq N[v_1]$. This implies that $|d(v_1) - d(b_1)|\geq 2$, a contradiction. So we have $[B_1, D_5] = \emptyset$. Now every vertex of $B_1$ must have a neighbor in $D_3$. Then by Lemma~\ref{lem-prprtiesfrB}(c), every vertex $b\in B_1$ is complete to $D_1\cup (B_1\setminus\{b\})\cup D_2\cup D_3$ and $[D_3, D_2 \cup D_4]$ is complete. Then for any $b_1\in B_1$, we have $N[b_1] = \{v_1, v_2\} \cup D_1\cup B_1\cup D_2\cup D_3$. Clearly, $N[b_1]\cup \{v_3\} = N[v_2]$. This implies that $d(v_2) > d(b_1)$, and hence $d(v_2) = 9$. Also, $[D_3, B_3]$ is complete; otherwise, $G[B_3\cup B_1\cup D_3\cup \{v_3, v_1, v_5\}]$ contains an induced $P_6$ in $G$. Now let $d_3\in D_3$. By $(\mathbb{O}\ref{O1})$ and the fact that $[D_3, D_2\cup D_4\cup B_1\cup B_3]$ is complete, we have $N[v_3]\cup B_1\subseteq N[d_3]$. Then, since $|d(d_3) - d(v_3)|\leq 1$, we have  $|B_1| = 1$. If $|D_2|\geq 2$, then $G[D_2\cup D_3\cup B_1\cup \{v_2, v_3, v_1\}]$ contains an induced $K_3\vee P_4$ in $G$, a contradiction. So $|D_2|\leq 1$. By $(\mathbb{O}\ref{O3})$, we have $[A_1, D_1]$ is complete. If $|D_1|\geq 3$, then $G[D_1\cup B_1\cup A_1\cup \{v_1, v_2, v_5\}]$ contains an induced $K_4\vee (K_2 \cup 2K_1)$, a contradiction. So $|D_1|\leq 2$. Then $d(v_2) = 9 = 2 + |D_1| + |D_2| + |D_3| + |B_1| \leq 2 + 2 + 1 + |D_3| + 1$. This implies that $|D_3|\geq 3$. Then $G[D_3\cup B_1\cup \{v_2, v_3, v_4\}]$ contains an induced $K_3\vee P_4$ in $G$, a contradiction. So (a) holds.

        \smallskip
	(b) Let $a_1\in A_1$ and $b_3\in B_3\cap N(a_1)$. For the sake of contradiction, assume that $d_5\in D_5$ is a neighbor of exactly one of $a_1$ and $b_3$. Then $\{d_5, a_1, b_3, v_4\}$ or $\{d_5, b_3, a_1, v_1\}$ induces a $C_4$ in $G$; so $a_1$ and $b_3$ have same set of neighbors in $D_5$. Similarly, we can show that $a_1$ and $b_3$ have same set of neighbors in $D_2$. Now suppose that $d_5\in D_5$ and $d_2 \in D_2$ such that $d_5a_1, d_2a_1\in E(G)$. Then we have $d_5b_3, d_2b_3\in E(G)$. Then $\{d_5, b_3, d_2, v_1\}$ induces a $C_4$ in $G$; so $a_1$ is anticomplete to $D_5$ or $D_2$. Let $|N(a_1)\cap D_5|\geq 2$. Then $G[D_5\cup \{a_1, v_1, v_5, v_4, b_3\}]$ contains an induced $K_2\vee C_5$ in $G$. Similarly, if $|N(a_1)\cap D_2|\geq 2$, then $G$ contains an induced $K_2\vee C_5$.

\smallskip
	(c) Suppose, for the sake of contradiction, that $d_1\in D_1$ and $d_2\in D_2$ such that $d_1d_2 \notin E(G)$. By $(\mathbb{O}\ref{O3})$, we have $[A_1, D_1]$ is complete. For $a_1\in A_1$, the set $\{a_1, d_1, v_5, v_4, v_3, d_2\}$ induces a $C_6$ or $P_6$ in $G$ depending on whether $d_2$ is adjacent to $a_1$; so $D_1$ is complete to $D_2$. Similarly, $D_1$ is complete to $D_5$.
\end{proof}

\subsection{Analysis when $A=\emptyset$ and $B\neq \emptyset$}\label{subsec-AemptyBnonempty}
In this subsection, we analyze the structure of $G$ when $A=\emptyset$ and $B\neq \emptyset$. We show that this configuration forces the existence of specific induced subgraphs in $G$.

\begin{lemma}\label{lem-rsltonBwhnAempty}
	Let $A = \emptyset$ and $B_i\neq \emptyset$ for some $i\in [5]$. If $B_{i+2}\cup B_{i-2}\neq \emptyset$, then $G$ contains $K_3\vee P_4$ as an induced subgraph.
\end{lemma}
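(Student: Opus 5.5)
We take $i=1$ by symmetry and, again by symmetry, assume $B_3\neq\emptyset$. Since $B_1\neq\emptyset$ and $B_3\neq\emptyset$, $(\mathbb{O}\ref{O4b})$ gives $B_2\cup B_4\cup B_5=\emptyset$. Lemma~\ref{lem-Rmpty} and $A=\emptyset$ then give $V(G)=\mathcal{C}\cup B_1\cup B_3\cup D$. By $(\mathbb{O}\ref{O5})$, $B_1$ is anticomplete to $B_3$. Because $A=\emptyset$, Lemma~\ref{lem-prprtiesfrB} applies to both $B_1$ and $B_3$. So every $b_1\in B_1$ has neighbors in exactly one of $D_5$ and $D_3$, and every $b_3\in B_3$ has neighbors in exactly one of $D_2$ and $D_5$. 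Each such choice comes with the completeness relations of Lemma~\ref{lem-prprtiesfrB}(b),(c), and Lemma~\ref{lem-prprtiesfrB}(d) gives $|B_1|,|B_3|\le 2$.

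The target configuration is a clique $Q$ of size $3$ complete to an induced $P_4$. The prototype is the one from the proof of Lemma~\ref{lem-prprtiesofA}(a). If $b_1$ is complete to $D_3$, then $b_1v_2v_3v_4$ is an induced $P_4$ complete to $D_3$, so $|D_3|\ge3$ yields $K_3\vee P_4$. Symmetric versions are $v_4v_5v_1b_1$ against $D_5$ when $b_1$ sees $D_5$, and the analogous paths through $b_3$ against $D_2$ or $D_5$. Failing that, we would take $Q$ inside a union such as $D_2\cup D_3$ (a clique when these are complete to each other) together with a $P_4$ like $b_1v_2v_3v_4$ or one using $b_3$. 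So the plan is to show that some relevant $D_j$, or some union $D_j\cup D_{j\pm1}$ complete to a suitable $P_4$, has at least $3$ vertices.

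We get these sizes by degree counting, as in the earlier lemmas, splitting into the four cases according to whether $b_1$ sees $D_3$ or $D_5$ and whether $b_3$ sees $D_2$ or $D_5$. For example, if $b_1$ sees $D_3$, then $N[b_1]=\{v_1,v_2\}\cup B_1\cup D_1\cup D_2\cup D_3$ and $N[v_2]=N[b_1]\cup\{v_3\}$. This forces $d(v_2)=9$, so $|D_1|+|D_2|+|D_3|+|B_1|=7$. A $P_6$ argument with $b_3,v_3,d_3,b_1,v_1,v_5$ shows $D_3$ is complete to $B_3$. Comparing $N[d_3]\supseteq N[v_3]\cup B_1$ with $d(v_3)$ bounds $|B_1|$ and $|B_3|$. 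Then $|D_1|$ and $|D_2|$ are bounded via the forbidden $K_3\vee P_4$ with $P_4$'s such as $b_3v_3v_2v_1$ or $v_5v_1v_2v_3$. Here we use $D_2$ complete to $D_1$ or $D_3$, which comes from Lemma~\ref{lem-prprtiesfrB}(b),(c) or from $C_6$-freeness. Pushing the remaining mass onto $D_3$ forces $|D_3|\ge3$. The case where $b_1$ or $b_3$ sees $D_5$ is handled the same way with $v_1$, $v_4$ or $v_5$ as the comparison vertex. Lemma~\ref{lem-prprtiesfrB}(b) makes $D_5$ complete to $D_4\cup D_1$, which again pins the degrees.

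The main obstacle is the mixed cases, for instance $b_1$ seeing $D_5$ and $b_3$ seeing $D_2$. There no single vertex of $\mathcal{C}$ has its closed neighborhood almost equal to that of a $B$-vertex, so the degree equations spread over several $D_j$'s. I would first try adding the equations $d(b_1)\ge 8$ and $d(b_3)\ge 8$ to the exact neighborhood descriptions and upper bounds like $d(v_1),d(v_3)\le 9$. The aim is to show some $D_j$ with the right completeness has size at least $3$, or that two complete $D_j$'s together have size at least $3$ and are complete to a common induced $P_4$. If that fails, the fallback is a clique-cutset contradiction, as in Lemma~\ref{lem-prprtiesfrB}(a), which rules out the degenerate subcase.
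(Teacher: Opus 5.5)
Your setup is right: the reduction to $V(G)=\mathcal{C}\cup B_1\cup B_3\cup D$ and the use of Lemma~\ref{lem-prprtiesfrB} for both $B_1$ and $B_3$ match the paper. What follows, however, is a plan rather than a proof. The case you single out as the main obstacle is left open as a list of things to try, and it is exactly the case that carries the argument.

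Your diagnosis of why it is hard is also wrong. If $b_1$ has a neighbor in $D_5$, then Lemma~\ref{lem-prprtiesfrB}(a),(b), $(\mathbb{O}\ref{O5})$ and $A_1=B_5=\emptyset$ give exactly $N[v_1]=N[b_1]\cup\{v_5\}$. Hence $d(v_1)=9$, just as in your worked example at $v_2$.

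Several concrete steps fail as well:
\begin{itemize}
\item No vertex is complete to $v_5v_1v_2v_3$, since it would have to lie in $R=\emptyset$. So that $P_4$ is useless.
\item Your step forcing $|D_3|\ge 3$ is not justified. When $b_1$ sees $D_3$ and $b_3$ sees $D_5$, the equation $|D_1|+|D_2|+|D_3|+|B_1|=7$ with $|D_2|\le 1$ (from $\{v_2\}\cup D_2$ against $v_1b_1d_3v_3$) is compatible with $|D_3|\le 2$ and the excess sitting in $D_1$. There no $K_3\vee P_4$ is available, so that subcase has to be closed at $v_4$ instead.
\item Splitting on a single $b_1$ ignores that the two possible vertices of $B_1$ may see different sides.
\end{itemize}

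The missing idea is to eliminate configurations by comparing degrees of $D$-vertices, not only of cycle vertices.
\begin{enumerate}
\item If both $B_1$ and $B_3$ have neighbors in $D_5$, then, since $[D_5,D_1\cup D_4]$ is complete, every $y\in D_5$ satisfies $N[y]\supseteq N[v_5]\cup\{b_1,b_3\}$. This contradicts $|d(y)-d(v_5)|\le 1$.
\item If $b_1$ sees $D_3$ and $b_3$ sees $D_2$, then every $y\in D_2$ satisfies $N[y]\supseteq N[b_1]\cup\{v_3,b_3\}$, again a contradiction.
\end{enumerate}
Combined with the reflection of $\mathcal{C}$ fixing $v_5$ (which swaps $B_1$ with $B_3$ and $D_2$ with $D_3$), this leaves only the case where every vertex of $B_1$ sees $D_5$ and every vertex of $B_3$ sees $D_2$. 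In that case:
\begin{itemize}
\item $N[d_5]=N[v_5]\cup B_1$ gives $|B_1|=1$.
\item $N[v_1]=N[b_1]\cup\{v_5\}$ gives $d(v_1)=9$.
\item The forbidden $K_3\vee P_4$'s formed by $D_5$ with $b_1v_1v_5v_4$, by $D_2$ with $v_1v_2v_3b_3$, and by $\{v_1\}\cup D_1$ with $v_2b_1d_5v_5$ give $|D_5|\le 2$, $|D_2|\le 2$ and $|D_1|\le 1$.
\end{itemize}
Then $d(v_1)=2+|D_1|+|B_1|+|D_2|+|D_5|\le 8$, a contradiction. This is the paper's argument, and none of these key comparisons appears in your proposal.
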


\begin{proof} 
	Let $A = \emptyset$ and $B_i\neq \emptyset$ for some $i\in [5]$. Without loss of generality, let $i = 1$, that is $B_1 \neq \emptyset$. Then by $(\mathbb{O}\ref{O4b})$, $B_2\cup B_5 = \emptyset$. Also, by $(\mathbb{O}\ref{O4b})$, one of the sets $B_3$ and $B_4$ must be empty. Without loss of generality, let $B_3\neq \emptyset$ and $B_4 = \emptyset$. Suppose, for the sake of contradiction, that $G$ does not contain $K_3\vee P_4$ as an induced subgraph. By Lemma~\ref{lem-Rmpty}, we have $R = \emptyset$. So $V(G) = \mathcal{C}\cup B_1\cup B_3\cup D$. By $(\mathbb{O}\ref{O5})$, for any $b_1\in B_1$, $N(b_1)\subseteq \{v_1, v_2\}\cup D_1\cup (B_1\setminus\{b_1\})\cup D_2\cup  D_3\cup D_5$ and for any $b_3\in B_3$, $N(b_3) \subseteq \{v_3, v_4\} \cup D_2 \cup D_3\cup (B_3\setminus\{b_3\})\cup D_4\cup D_5$. By Lemma~\ref{lem-prprtiesfrB}(a)-(c), every $b\in B_1$ has a neighbor in $D_5$ or $D_3$ but not in both the sets, and is complete to $D_1\cup (B_1\setminus\{b\})\cup  D_2\cup D_5$ or $D_1\cup (B_1\setminus\{b\})\cup D_2\cup D_3$. This implies that $B_1$ is a clique and $[B_1, D_1 \cup D_2]$ is complete. Similarly, by Lemma~\ref{lem-prprtiesfrB}(a)-(c), we have $B_3$ is a clique and $[B_3, D_3\cup D_4]$ is complete. Note that by $(\mathbb{O}\ref{O1})$ and $(\mathbb{O}\ref{O5})$, $D_i$ is a clique and $D_i$ is anticomplete to $D_{i-2}\cup D_{i+2}\cup B_{i+2}$ for every $i\in[5]$.
	
	\begin{claim}\label{claim-1} $[B_1, D_5] = \emptyset$ or $[B_3, D_5] = \emptyset$.
	\end{claim}
	
	\begin{claimproof}
		For the sake of contradiction, assume that $[B_1, D_5]\neq \emptyset$ and $[B_3, D_5]\neq \emptyset$. Then there exist vertices $b_1\in B_1$ and $b_3\in B_3$ such that each has a neighbor in $D_5$. By Lemma~\ref{lem-prprtiesfrB}(b)-(c), $[\{b_1,b_3\}, D_5]$ and $[D_5, D_1\cup D_4]$ are complete. Then for any $y\in D_5$, $N[v_5]\cup \{b_1, b_3\}\subseteq N[y]$ implying that $|d(y) - d(v_5)|\geq 2$, a contradiction. This proves Claim~\ref{claim-1}.
	\end{claimproof}
	
	\begin{claim}\label{claim-2}
		$[B_1, D_3] = \emptyset$ or $[B_3, D_2] = \emptyset$.
	\end{claim}

	\begin{claimproof}
		For the sake of contradiction, assume that $[B_1, D_3] \neq \emptyset$ and $[B_3, D_2] \neq \emptyset$. Then there exist vertices $b_1 \in B_1$ and $b_3 \in B_3$ with $|N(b_1)\cap D_3|\geq 1$ and $|N(b_3)\cap D_2|\geq 1$. By Lemma~\ref{lem-prprtiesfrB} (b)-(c), $b_1$ is complete to $D_3$ and $b_3$ is complete to $D_2$. Also, by $(\mathbb{O}\ref{O6})$, $b_1$ and $b_3$ are anticomplete to $D_5$. Now we have $N[b_1] = \{v_1, v_2\}\cup D_1\cup B_1\cup D_2\cup  D_3$ and  $N[b_3] = \{v_3, v_4\}\cup D_2 \cup D_3\cup B_3\cup D_4$. Again, by Lemma~\ref{lem-prprtiesfrB}(b)-(c), $[D_3, D_2\cup D_4]$ and $[D_2, D_1 \cup D_3]$ are complete. Then by $(\mathbb{O}\ref{O1})$, for any $y\in D_2$, $N[y] = \{v_1, v_2, v_3, b_3\}\cup D_1\cup B_1\cup D_2\cup D_3$. Clearly, $N[b_1]\cup \{v_3, b_3\}\subseteq N[y]$ implying that $|d(y) - d(b_1)|\geq 2$, a contradiction. This proves Claim~\ref{claim-2}.
	\end{claimproof}
	
	Recall that by Lemma~\ref{lem-prprtiesfrB}(a), every vertex of $B_1$ has a neighbor in $D_5$ or $D_3$ and every vertex of $B_3$ has a neighbor in $D_5$ or $D_2$ but not in both the sets. Without loss of generality, we may assume that $b_1\in B_1$ such that $b_1$ has a neighbor in $D_5$. Then by Claim~\ref{claim-1}, we have $[B_3, D_5] = \emptyset$. Then every vertex of $B_3$ has a neighbor in $D_2$. Moreover, by Lemma~\ref{lem-prprtiesfrB}(b), $[B_3, D_2]$ and $[D_2, D_1 \cup D_3]$ are complete. By Claim~\ref{claim-2}, we have $[B_1, D_3] = \emptyset$ and every vertex of $B_1$ has a neighbor in $D_5$. Moreover, by Lemma~\ref{lem-prprtiesfrB}(b), $[B_1, D_5]$ and $[D_5, D_1\cup D_4]$ are complete. Now let $d_5\in D_5$. By $(\mathbb{O}\ref{O1})$ and the fact that $[D_5, D_1\cup B_1\cup D_4]$ is complete, we have $N[v_5]\cup B_1 = N[d_5]$. Then, since $|d(d_5) - d(v_5)|\leq 1$, we have $|B_1|= 1$. We have $N[v_1] = N[b_1]\cup \{v_5\}$. Clearly, $d(v_1) > d(b_1)$ implying that $d(v_1) = 9$. Next, we have $|D_5|\leq 2$, $|D_2|\leq 2$, and $|D_1|\leq 1$; otherwise, $G[D_5\cup \{b_1, v_1, v_5, v_4\}]$, $G[D_2\cup \{v_1, v_2, v_3, b_3\}]$, or $G[D_1\cup D_5\cup \{v_1, v_2, b_1, v_5\}]$ contains an induced $K_3\vee P_4$, a contradiction. 
	Then $d(v_1) = 2 + |D_1| + |B_1| + |D_2| + |D_5|\leq 2 + 1 + 1 + 2 + 2 = 8$, a contradiction. This completes the proof of Lemma~\ref{lem-rsltonBwhnAempty}.
\end{proof}		

\begin{lemma} \label{lem-AmptyimpliesBmpty}
If $A = \emptyset$ and $B\neq \emptyset$, then $G$ contains $K$ as an induced subgraph, where $K\in \{K_3\vee P_4, K_4\vee 2K_2\}$.
	\end{lemma}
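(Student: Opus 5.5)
By $(\mathbb{O}\ref{O4b})$ and Lemma~\ref{lem-rsltonBwhnAempty}, I would first reduce to the case where exactly one $B_i$ is non-empty; by symmetry take $B_1\neq\emptyset$ and $B_2\cup B_3\cup B_4\cup B_5=\emptyset$. Otherwise $G$ already contains $K_3\vee P_4$ and we are done. Suppose, for contradiction, that $G$ contains neither $K_3\vee P_4$ nor $K_4\vee 2K_2$ as an induced subgraph. By Lemma~\ref{lem-Rmpty}, $V(G)=\mathcal{C}\cup B_1\cup D$. Since $A=\emptyset$, Lemma~\ref{lem-prprtiesfrB} applies to $B_1$. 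So every vertex of $B_1$ has neighbors in exactly one of $D_5$ and $D_3$, and $|B_1|\le 2$. The map $v_1\leftrightarrow v_2$, $v_5\leftrightarrow v_3$ interchanges $D_5$ and $D_3$, so by this reflection we may assume some $b_1\in B_1$ has a neighbor in $D_5$.

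Next I would extract the forced structure from Lemma~\ref{lem-prprtiesfrB}(b).
\begin{itemize}
\item $b_1$ is complete to $(B_1\setminus\{b_1\})\cup D_1\cup D_2\cup D_5$, and $[D_5,D_1\cup D_4]$ is complete.
\item Since $N[v_1]=N[b_1]\cup\{v_5\}$, the degree condition gives $d(v_1)=9$ and $d(b_1)=8$.
\item For $d_5\in D_5$ we have $N[v_5]\cup\{b_1\}\subseteq N[d_5]$, so $N(D_5)\cap B_1=\{b_1\}$. If $|B_1|=2$, the other vertex $b_1'$ has neighbors in $D_3$. By part (c), $b_1'$ is complete to $D_1\cup D_2\cup D_3$ and $[D_3,D_2\cup D_4]$ is complete.
\end{itemize}

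Then I would bound the sizes of the $D_j$ by the forbidden subgraphs, as in the proof of Lemma~\ref{lem-rsltonBwhnAempty}.
\begin{itemize}
\item $|D_5|\le 2$: the path $b_1v_1v_5v_4$ is an induced $P_4$ complete to the clique $D_5$, so $|D_5|\ge 3$ gives $K_3\vee P_4$.
\item $|D_1|\le 1$: otherwise $G[D_1\cup D_5\cup\{v_1,v_2,b_1,v_5\}]$ contains $K_3\vee P_4$.
\item If $B_1=\{b_1,b_1'\}$, the symmetric argument at $v_2$ gives $|D_3|\le 2$ and $d(v_2)=9$.
\item $D_2$ needs its own bound. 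When $|B_1|=2$, the set $\{v_1,v_2,b_1,b_1'\}$ together with $D_1\cup D_2$ is highly joined. Two vertices of $D_2$ with $v_1v_2$, or a pair from $D_5$ with a pair from $D_3$, should give the $2K_2$ or $K_4$ part of a $K_4\vee 2K_2$. When $|B_1|=1$, I would use the path $v_1v_2v_3d$ with $d$ a suitable $D_3$-vertex, or the degree of a $D_2$-vertex compared with $v_2$.
\end{itemize}
Plugging these bounds into $9=d(v_1)=2+|D_1|+|D_2|+|D_5|+|B_1|$ should force some set to be larger than allowed, giving the contradiction.

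The main obstacle is the two-vertex case $B_1=\{b_1,b_1'\}$. Here the degree equation at $v_1$ alone leaves room, and one must combine the conditions at $v_1$ and $v_2$ with degree comparisons for vertices of $D_1\cup D_2$ (for instance against $b_1$, which has degree $8$). One must also check which adjacencies between $D_1$, $D_2$, $D_3$, $D_5$ create an induced $K_4\vee 2K_2$. This is exactly where the second forbidden graph enters. I would first try to show that $|D_2|\ge 2$ together with $b_1,b_1',v_1,v_2$ yields it, or else that $D_2$ is tiny and the count at $v_2$ fails.
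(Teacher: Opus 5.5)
\textbf{Verdict: there is a genuine gap.} Your setup is correct and matches the paper's opening. You reduce to $B_1\neq\emptyset$ with the other $B_j$ empty, take $b_1$ with its neighbours in $D_5$, and obtain $d(v_1)=9$, $N(D_5)\cap B_1=\{b_1\}$, $|D_1|\le 1$ and $|D_5|\le 2$. But the proposal stops exactly where the contradiction must be produced (``should give'', ``should force''). The two concrete routes you sketch for that step do not work, and you have placed the difficulty in the wrong case.

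\textbf{The case $B_1=\{b_1,b_1'\}$.} You hope for a $K_4\vee 2K_2$ built from two vertices of $D_2$ together with $b_1,b_1',v_1,v_2$. These six vertices induce a clique. The common neighbourhood of $\{b_1,b_1',v_1,v_2\}$ is $D_1\cup D_2$, where $|D_1|\le 1$ and $D_2$ is a clique, so no $2K_2$ is available. The missing idea is a $K_3\vee P_4$:
\begin{itemize}
\item Take $d_3\in N(b_1')\cap D_3$. Then $v_1b_1'd_3v_3$ is an induced $P_4$.
\item $\{v_2\}\cup D_2$ is complete to this path, because $b_1'$ is complete to $D_2$ and $[D_3,D_2]$ is complete.
\item Hence $|D_2|\le 1$, so $d(v_1)\le 2+2+1+1+2=8$, contradicting $d(v_1)=9$.
\end{itemize}
This is how the paper gets $B_1=\{b_1\}$, and it is the easy case.

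\textbf{The case $B_1=\{b_1\}$.} This is where the real work is. The count at $v_1$ gives $|D_2|\ge 3$, so no upper bound on $|D_2|$ from counting will finish. You need a $K_4\vee 2K_2$, which requires a vertex of $D_3$ complete to three vertices of $D_2$ (or of $D_4$). Your suggested path $v_1v_2v_3d$ with $d\in D_3$ is not induced, since $d\sim v_2$. The paper's route has three steps:
\begin{itemize}
\item $d(v_5)\ge 8$ gives $|D_4|\ge 3$.
\item A degree count shows $D_3\neq\emptyset$.
\item Since $N[d_3]\subseteq N[v_3]$ for $d_3\in D_3$, the vertex $d_3$ misses at most one vertex of $D_2\cup D_4$, so it is complete to $D_2$ or to $D_4$.
\end{itemize}
Then $\{v_2\}\cup D_2$ with the edges $v_1b_1$ and $v_3d_3$, or $\{v_4\}\cup D_4$ with the edges $v_3d_3$ and $v_5d_5$ for some $d_5\in D_5$, is an induced $K_4\vee 2K_2$.

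Your other hint, comparing a $D_2$-vertex with $v_2$, can also close this case, but only after an exact count. From $d(v_1)=9$, $d(v_5)\ge 8$, $d(v_3)\le 9$ and $d(v_2)\ge 8$ one gets $|D_2|=|D_4|=3$, $|D_1|=|D_3|=1$ and $d(v_2)=8$. Then $N[d_2]=N[v_2]$ for every $d_2\in D_2$, so $d_3$ is complete to $D_2$ and the first $K_4\vee 2K_2$ appears. None of this counting is in your proposal, so as written the argument is incomplete in both cases.
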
	
		
\begin{proof} 
Let $A = \emptyset$ and $B\neq \emptyset$. Suppose, for the sake of contradiction, that $G$ contains neither $K_3\vee P_4$ nor $K_4\vee 2K_2$ as an induced subgraph. Since $B\neq \emptyset$, we have $B_i\neq \emptyset$ for some $i\in [5]$. Without loss of generality, let $i = 1$, that is $B_1\neq \emptyset$. Then by $(\mathbb{O}\ref{O4b})$, $B_2\cup B_5 = \emptyset$. By Lemma~\ref{lem-rsltonBwhnAempty}, $B_3\cup B_4 = \emptyset$; otherwise, $G$ contains an induced $K_3\vee P_4$, a contradiction. By Lemma~\ref{lem-Rmpty}, we have $R = \emptyset$. So $V(G) = B_1\cup \mathcal{C}\cup D$. By Lemma~\ref{lem-prprtiesfrB}(a), every vertex of $B_1$ has a neighbor in $D_5\cup D_3$ but not in both the sets. Without loss of generality, assume that $b_1\in B_1$ such that $b_1$ has a neighbor in $D_5$. Then $N(b_1)\cap D_3 = \emptyset$. Also, by Lemma~\ref{lem-prprtiesfrB}(b), $b_1$ is complete to $(B_1\setminus \{b_1\})\cup D_1\cup D_2\cup D_5$ and $[D_5, D_1\cup D_4]$ is complete. Then by $(\mathbb{O}\ref{O5})$, we have $N[b_1] = \{v_1, v_2\}\cup D_1\cup B_1\cup D_2\cup D_5$. Note that by $(\mathbb{O}\ref{O1})$ and $(\mathbb{O}\ref{O5})$, $D_i$ is a clique and $D_i$ is anticomplete to $D_{i-2}\cup D_{i+2}\cup B_{i+2}$ for every $i\in[5]$.
			
\begin{claim}\label{claim-3}
 $|D_1|\leq 1$, $|D_5|\leq 2$ and $B_1=\{b_1\}$.
\end{claim}
			
\begin{claimproof}
We have $b_1\in B_1$ has a neighbor in $D_5$, moreover, $b_1$ is complete to $D_5$. If $|D_1|\geq 2$, then $G[D_1\cup D_5\cup \{v_1, v_2, b_1, v_5\}]$ contains an induced $K_3\vee P_4$ in $G$, a contradiction. So we have $|D_1|\leq 1$. If $|D_5|\geq 3$, then $G[D_5\cup \{b_1, v_1, v_5, v_4\}]$ contains an induced $K_3\vee P_4$ in $G$, a contradiction. So $|D_5|\leq 2.$ Next, for the sake of contradiction, assume that there exists $b_1^\prime\in B_1\setminus \{b_1\}$. Then by Lemma~\ref{lem-prprtiesfrB}(d), we have $B_1 = \{b_1, b_1'\}$. By Lemma~\ref{lem-prprtiesfrB}(a), $b_1^\prime$ has a neighbor in exactly one of the sets $D_5$ or $D_3$. Since $b_1$ is complete to $D_5$ and $[D_5, D_1\cup D_4]$ is complete, we have $N(d_5)\cap B_1 = \{b_1\}$ for every $d_5\in D_5$; otherwise, $|d(d_5)- d(v_5)|\geq 2$, a contradiction. Then $b_1^\prime$ must have a neighbor in $D_3$. Moreover, by using Lemma~\ref{lem-prprtiesfrB}(c), $b_1^\prime$ is complete to $D_1\cup D_2\cup D_3$ and $[D_3, D_2\cup D_4]$ is also complete. Now $|D_2|\leq 1$; otherwise, $G[D_2\cup D_3\cup \{v_1, v_2, b_1', v_3\}]$ contains an induced $K_3 \vee P_4$ in $G$, a contradiction. Then $d(v_1)= 2 + |B_1|+ |D_1| + |D_2| + |D_5|\leq 2 + 2 + 1 + 1 + 2= 8$. But since $N[b_1]\cup \{v_5\} = N[v_1]$, we have $d(v_1)= 9$. This is a contradiction. Hence $B_1 = \{b_1\}$. 
\end{claimproof}
			
\begin{claim}\label{claim-4}
$D_3\neq \emptyset$.
\end{claim}
			
\begin{claimproof} 
For the sake of contradiction, let $D_3 = \emptyset$. By Claim~\ref{claim-3}, we have $B_1 = \{b_1\}$. Also, $b_1$ is complete to $D_1\cup D_2\cup D_5$ and $[D_5, D_1 \cup D_4]$ is also complete. Using $(\mathbb{O}\ref{O1})$, we have $N[d_5] = N[v_5]\cup \{b_1\}$ for every $d_5\in D_5$. Then $d(v_5) < d(d_5)$ for every $d_5 \in D_5$. This implies that $d(d_5) = 9$ and $d(v_5) = 8$. Also, we have $N[v_1] = N[b_1]\cup \{v_5\}$. Then $d(b_1) < d(v_1)$. This implies that $d(v_1) = 9$ and $d(b_1) = 8$. By Claim~\ref{claim-3}, we have $|D_5|\leq 2$. Since $d(v_4) = 2 + |D_3| + |D_4| + |D_5| = 2 + 0 + |D_4| + |D_5| \geq 8$, we have $|D_4| + |D_5|\geq 6$. Also, since $d(v_5) = 8 = 2 + |D_1| + |D_4| + |D_5|\geq 2 + |D_1| + 6$, we have $|D_4| + |D_5| = 6$ and $|D_1| = 0$. Now since $|D_5|\leq 2$, we have $|D_4|\geq 4$. Since $d(v_1) = 9 = 2 + |D_1| + |B_1| + |D_2| + |D_5| \leq 2 + 0 + 1 + |D_2| + 2$, we have $|D_2|\geq 4$. Then $d(v_3) = 2 + |D_2| + |D_3| + |D_4|\geq 2 + 4 + 0 + 4 = 10$, a contradiction.	  
\end{claimproof}
			
Using Claim~\ref{claim-3}, we have $B_1 = \{b_1\}$. Since $[B_1, D_5]$ is complete and $|N(b_1)\cap D_5|\geq 1$, by Claim~\ref{claim-3}, we have $1\leq |D_5|\leq 2$. Moreover, by Claim~\ref{claim-3}, we have $|D_1|\leq 1$. Note that $N[b_1]\cup \{v_5\} = N[v_1]$. Clearly, $d(v_1) > d(b_1)$ implying that $d(v_1) = 9$. Then $d(v_1) = 9 = 2 + |D_1| + |B_1| + |D_2| + |D_5|\leq 2 + 1 + 1 + |D_2| + 2$. This implies that $|D_2|\geq 3$. Also, note that $N[v_5]\cup B_1 = N[d_5]$ for every $d_5\in D_5$. Clearly, $d(v_5) < d(d_5)$ for every $d_5\in D_5$. So $d(v_5) = 8$. Then $d(v_5) = 8 = 2 + |D_1| + |D_4| + |D_5|\leq 2 + 1 + |D_4| + 2$. This implies that $|D_4|\geq 3$. By Claim~\ref{claim-4}, we have $|D_3|\geq 1$. Let $d_3\in D_3$. By $(\mathbb{O}\ref{O1})$, we have $N[d_3]\subseteq N[v_3]$. Then $d_3$ is non-adjacent to at most one vertex of $D_2\cup D_4$; otherwise, $|d(v_3) - d(d_3)|\geq 2$, a contradiction. So $d_3$ is complete to $D_2$ or $D_4$. Then $G[D_2\cup \{d_3, v_3, v_2, b_1, v_1\}]$ or $G[D_4\cup D_5\cup \{d_3, v_3, v_4, v_5\}]$ contains an induced $K_4\vee 2K_2$ in $G$, a contradiction. So $G$ contains $K$ as an induced subgraph, where $K\in \{K_4\vee 2K_2, K_3\vee P_4\}$. This completes the proof of Lemma~\ref{lem-AmptyimpliesBmpty}.
\end{proof}	 

\subsection{Analysis when $A= B= \emptyset$}\label{subsec-ABempty}

In this subsection, we analyze the structure of $G$ when $A=\emptyset$ and $B= \emptyset$. We show that due to this configuration, specific induced subgraphs appear in $G$ or $G$ is isomorphic to $G_4$.

\begin{lemma}\label{thm-P6C4C6} 
If $A = B = \emptyset$, then $G$ contains an induced $K_4\vee 2K_2$ or $G$ is isomorphic to $G_4$.
\end{lemma}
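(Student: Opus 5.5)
When $A=B=\emptyset$ and $R=\emptyset$ (Lemma~\ref{lem-Rmpty}), we have $V(G)=\mathcal{C}\cup D$, so the proof reduces to counting. Write $n_i=|D_i|$. By $(\mathbb{O}\ref{O1})$, each $D_i$ is a clique, anticomplete to $D_{i\pm 2}$. Hence $d(v_i)=2+n_{i-1}+n_i+n_{i+1}$. A vertex $d\in D_i$ has
\[
N[d]\subseteq \{v_{i-1},v_i,v_{i+1}\}\cup D_{i-1}\cup D_i\cup D_{i+1}=N[v_i]\cup\{v_{i\pm1}\}\setminus\{v_{i\pm1}\}.
\]
Concretely, $N[d]=\{v_{i-1},v_i,v_{i+1}\}\cup D_i\cup (N(d)\cap(D_{i-1}\cup D_{i+1}))$ and $N[v_i]=\{v_{i-1},v_i,v_{i+1}\}\cup D_{i-1}\cup D_i\cup D_{i+1}$. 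So $N[d]\subseteq N[v_i]$, and the degree condition $|d(d)-d(v_i)|\le 1$ says that $d$ misses at most one vertex of $D_{i-1}\cup D_{i+1}$.

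The plan has the following steps.

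\textbf{Step 1: bound each $n_i$ from the degrees.} Every $d(v_i)\in\{8,9\}$, so $n_{i-1}+n_i+n_{i+1}\in\{6,7\}$ for all $i$.

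\textbf{Step 2: find a copy of $K_4\vee 2K_2$.} Whenever some $d_i\in D_i$ is complete to $D_{i+1}$ with $n_{i+1}\ge 2$, and $D_{i+1}$ is complete to at least two vertices of $D_{i+2}$, consider $G[\{d_i,v_{i+1},v_{i+2}\}\cup D_{i+1}\cup\ldots]$. More cleanly, take two vertices $x,y\in D_i$ complete to two vertices $z,w\in D_{i+1}$. Then $\{x,y,z,w\}$ is a $K_4$. Adjoin the edges $v_{i-1}v_i$... but $v_{i-1}$ is not adjacent to $D_{i+1}$, so instead I would use the pattern already employed in Lemma~\ref{lem-AmptyimpliesBmpty}. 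There the $K_4$ consists of $\{v_i\}\cup D_i$-type vertices together with part of an adjacent $D$, and the $2K_2$ is $\{v_{i-1},x\}\,\{v_{i+1},y\}$ with $x\in D_{i-1}$ and $y\in D_{i+1}$.

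So I would look for a vertex $v_i$ together with three vertices of $D_i$, or $v_i$ with two vertices of $D_i$ and one further common neighbour. These form a $K_4$ complete to $v_{i-1}$, $v_{i+1}$, some $x\in D_{i-1}$ and some $y\in D_{i+1}$, where $x$ and $y$ are nonadjacent (they are anticomplete by $(\mathbb{O}\ref{O1})$, since $D_{i-1}$ and $D_{i+1}$ are at distance 2). Also $v_{i-1}$ is nonadjacent to $y$ and $v_{i+1}$ is nonadjacent to $x$, so $\{v_{i-1},x\}$ and $\{v_{i+1},y\}$ form a $2K_2$. This requires $n_i\ge 3$, $n_{i\pm1}\ge 1$, and the three $D_i$-vertices being complete to $x$ and $y$. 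By the "misses at most one" property, if $n_{i-1},n_{i+1}\ge 2$ we can choose $x,y$ suitably. With $n_i\ge 4$ and each vertex missing at most one, a pigeonhole argument selects three vertices of $D_i$ and an $x,y$ pair that are all adjacent.

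\textbf{Step 3: the remaining case.} When the configuration of Step 2 fails, Step 1 forces the sizes $(n_1,\dots,n_5)$ into a short list, e.g.\ all $n_i$ equal to $2$ or a specific pattern of $2$s and $3$s. In that case the adjacency between consecutive $D_i$ is forced by the degree bounds on the $d$-vertices and on $v_i$, and is compared with the picture of $G_4$ in Figure~\ref{figG1G2G3G4}.

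\textbf{Main obstacle.} I expect the hardest part to be Step 3: enumerating the size vectors that avoid $K_4\vee 2K_2$ and pinning down the missing edges between consecutive $D_i$ exactly. The sum constraints give $\sum_i n_i\in[15,17.5]$ (since $3\sum n_i=\sum_i(d(v_i)-2)\in[30,35]$), which is where I would start the case split. I would also use the absence of clique-cutsets to exclude $n_i=0$ (a zero would make $\{v_{i-1},v_{i+1}\}$-type sets cut off parts), and the $C_6$/$P_6$-freeness to force missing edges between $D_i$ and $D_{i+1}$ to form a matching.
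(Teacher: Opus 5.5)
Your setup matches the paper's: for $d\in D_i$ you have $N[d]\subseteq N[v_i]$, so $d$ misses at most one vertex of $D_{i-1}\cup D_{i+1}$. Your target configuration is also the paper's: $\{v_i\}\cup D_i$ with $|D_i|=3$, joined to $\{v_{i-1},x\},\{v_{i+1},y\}$. However, the selection in Step~2 does not follow from ``misses at most one''. Take $D_i=\{a,b,c\}$ and $D_{i-1}=\{x_1,x_2\}$. The degree condition allows $a$ to miss only $x_1$ and $b$ to miss only $x_2$, and then no vertex of $D_{i-1}$ is complete to $D_i$. What rules this out is $C_4$-freeness, since $\{a,x_2,x_1,b\}$ would induce a $C_4$. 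The paper proves a separate claim: at most one vertex of $D_i$ is not complete to $D_{i-1}\cup D_{i+1}$. It uses exactly this $C_4$, together with the induced $C_6$ on $\{d_1',d_1,d_2,v_3,v_4,d_5\}$ for misses on opposite sides. Your remark that the missing edges form a matching is automatic from the degree condition and too weak for the selection. Also, the case $n_i\ge 4$ never occurs.

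The second gap is that Step~3 is not carried out, and its one quantitative input is wrong. Each $n_j$ occurs in three of the sums $n_{i-1}+n_i+n_{i+1}$, so $3\sum_i n_i\in[30,35]$ and $\sum_i n_i\in\{10,11\}$, not $[15,17.5]$. Your range would exclude $G_4$ itself. You also need $1\le n_i\le 3$. This comes from degree counting, not clique-cutsets: for instance, $n_1=0$ forces $n_2,n_5\ge 5$ and hence $d(v_1)\ge 12$. Most importantly, you must treat a size-$3$ clique $D_i$ with a neighbouring $D_{i\pm1}$ of size $1$. There the configuration around $D_i$ can fail and must be found elsewhere; in the paper, $|D_1|=3$ and $|D_5|=1$ lead to $|D_3|=2$, $|D_4|=3$, with the $K_4\vee 2K_2$ at $D_4$.

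With the corrected constraints your enumeration does close. The size vectors are all $2$'s, one $3$ and four $2$'s, or $(3,1,3,2,2)$ up to symmetry. If all are $2$, every $d(v_i)=8$, so $N[d]=N[v_i]$ for each $d\in D_i$ and $G\cong G_4$. In the one-$3$ case the $C_4$ observation directly gives $x$ and $y$. In the $(3,1,3,2,2)$ case, the lone vertex of the size-$1$ clique misses at most one vertex, so it is complete to one of its two size-$3$ neighbours, and the configuration sits at that neighbour.
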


\begin{proof}
Let $A = B = \emptyset$. By Lemma~\ref{lem-Rmpty}, we have $R = \emptyset$. So $V(G) = \mathcal{C}\cup D$. Suppose that $G$ does not contain $K_4\vee 2K_2$ as an induced subgraph; otherwise, we are done. By $(\mathbb{O}\ref{O1})$, for each $i\in[5]$, $D_i$ is a clique and $N[d_i]\subseteq \{v_{i-1}, v_i, v_{i+1}\}\cup D_{i-1} \cup D_i\cup D_{i+1}$ for every $d_i\in D_i$. Note that $N[d_i]\subseteq N[v_i]$ for each $i\in [5]$. Then, since $|d(v_i) - d(d_i)|\leq 1$, every $d_i\in D_i$ is non-adjacent to at most one vertex in $D_{i-1}\cup D_{i+1}$ for each $i\in[5]$. Then every vertex of $D_i$ is complete to $D_{i-1}$ or $D_{i+1}$.

\begin{claim}\label{claim-thm4claim1}
For each $i\in [5]$, at most one vertex of $D_i$ is not complete to $D_{i-1}\cup D_{i+1}$.
\end{claim}

\begin{claimproof}
We prove this for $i=1$ since the remaining cases follow by symmetry. Suppose, for the sake of contradiction, that $d_1, d_1'\in D_1$ such that $d_1$ and $d_1'$ are not complete to $D_2\cup D_5$. Without loss of generality, assume that $d_1$ is not complete to $D_5$. Then there exists $d_5\in D_5$ such that $d_1d_5\notin E(G)$ and $d_1$ is complete to $D_2\cup (D_5\setminus \{d_5\})$. Assume that $d_1'$ is also not complete to $D_5$. Then there exists $d_5'\in D_5\setminus \{d_5\}$ such that $d_1'd_5'\notin E(G)$ and $d_1'd_5\in E(G)$. Then $\{d_1, d_1', d_5, d_5'\}$ induces a $C_4$ in $G$; so $d_1'$ is complete to $D_5$. Now let $d_2\in D_2$ such that $d_1'd_2\notin E(G)$. Then $\{d_1', d_1, d_2, v_3, v_4, d_5\}$ induces a $C_6$ in $G$; so $d_1'$ is complete to $D_2$. Then $d_1'$ is complete to $D_2\cup D_5$, a contradiction.  
\end{claimproof}

\begin{claim}\label{claim-10}
For each $i\in [5]$, $1\leq |D_i|\leq 3$.
\end{claim}
	
\begin{claimproof}
We prove this for $i=1$, that is, $1\leq |D_1|\leq 3$ since the remaining cases follow by symmetry. Let $D_1 = \emptyset$. Then, since $d(v_2)\geq 8$, we have $|D_2\cup D_3|\geq 6$. Then we have $|D_4|\leq 1$ since $d(v_3)\leq 9$. Also, $|D_4\cup D_5|\geq 6$ since $d(v_5)\geq 8$. Then, since $d(v_4)\leq 9$, we have $|D_3|\leq 1$. Then $|D_2|\geq 5$ and $|D_5|\geq 5$. Then $d(v_1) = 2 + |D_1| + |D_2| + |D_5|\geq 2 + 0 + 5 + 5 = 12$, a contradiction. So $|D_1|\geq 1$. Hence, $|D_i|\geq 1$ for each $i\in[5]$. Now let $|D_1|\geq 4$. Then, since $d(v_2)\leq 9$, we have $|D_2| + |D_3|\leq 3$. Also, since $d(v_5)\leq 9$, $|D_5| + |D_4|\leq 3$. Since $d(v_3)\geq 8$, we have $|D_4|\geq 3$. This implies that $|D_4| = 3$ and $|D_5| = 0$, a contradiction. Hence, $|D_i|\leq 3$ for each $i\in[5]$. This proves Claim~\ref{claim-10}.
\end{claimproof} 
	
\begin{claim}\label{claim-cmpltbuoy}
If $d(v_i)= 8$ for each $i\in [5]$, then $G$ is isomorphic to $G_4$.
\end{claim}

\begin{claimproof}
For each $i\in [5]$, let $d(v_i)= 8$. We know that $N[x]\subseteq N[v_i]$ for every $x\in D_i$, $i\in[5]$. Then $N[x] = N[v_i] = \{v_i, v_{i-1}, v_{i+1}\}\cup D_i\cup D_{i-1}\cup D_{i+1}$ for every $x\in D_i$, $i\in[5]$; otherwise $d(x)\leq 7$, a contradiction. Hence $d(v) = 8$ for every $v\in V(G)$ and $[D_i, D_{i-1}\cup D_{i+1}]$ is complete. By Claim~\ref{claim-10}, we have $1\leq |D_i|\leq 3$ for each $i\in [5]$. If $|D_i|=3$ for any $i\in [5]$, then $G[D_{i-1}\cup D_i\cup  D_{i+1}\cup \{v_{i-1}, v_i, v_{i+1}\}]$ contains an induced $K_4\vee 2K_2$, a contradiction.  So $|D_i|\neq 3$ for any $i\in [5]$. Also, for each $i\in [5]$, since $d(v_i)=8$, we have $|D_i| = 2$ and $G$ is isomorphic to $G_4$.
\end{claimproof}

Now we return to the proof of Lemma~\ref{thm-P6C4C6}.  By Claim~\ref{claim-cmpltbuoy}, we may assume that there exist $v_i\in \mathcal{C}$ for some $i\in[5]$ such that $d(v_i) = 9$; otherwise, we are done. Then by Claim~\ref{claim-10}, we have $|D_j|=3$ for some $j\in[5]$. Without loss of generality, let $|D_1|=3$. If there are $x\in D_2$ and $y\in D_5$ such that $x$ and $y$ are complete to $D_1$, then $G[D_1\cup \{v_1, v_2, x, v_5, y\}]$ contains an induced $K_4\vee 2K_2$ in $G$, a contradiction. So by Claims~\ref{claim-thm4claim1}-\ref{claim-10}, we have $|D_2| = 1$ or $|D_5|=1$. By symmetry, assume that $|D_5|=1$. Then, since $d(v_1)\geq 8$, we have $|D_2|\geq 2$. By Claim~\ref{claim-thm4claim1}, there is a vertex $y\in D_2$ such that $y$ is complete to $D_1$. Then $D_5$ is not complete to $D_1$. Hence $D_5$ is complete to $D_4$. Let $d_1\in D_1$ and $d_2\in D_2$. Then, since $|d(d_1)-d(d_2)|\leq 1$, we have $|D_3|\leq 2$. Also, since $d(v_4)= 2 + |D_3| + |D_4|+ |D_5| = 2 + |D_3| + |D_4|+ 1\geq 8$, we have $|D_3| + |D_4|\geq 5$. Then by Claim~\ref{claim-10} and the fact that $|D_3|\leq 2$, we have $|D_3| = 2$ and $|D_4|=3$. Then by Claim~\ref{claim-thm4claim1}, $G[D_3\cup D_4\cup D_5\cup \{v_3, v_4, v_5\}]$ contains an induced $K_4\vee 2K_2$, again a contradiction. This completes the proof of Lemma~\ref{thm-P6C4C6}.
\end{proof}

\subsection{Analysis when $A\neq \emptyset$}\label{subsec-Anonempty}

In this subsection, we analyze the structure of $G$ when $A\neq \emptyset$. We show that due to this configuration, specific induced subgraphs appear in $G$ or $G$ is isomorphic to one of the graphs $G_1$, $G_2$, or $G_3$.

\begin{lemma}\label{lem-Aempty}
If $A\neq \emptyset$, then $G$ contains $K$ as an induced subgraph for some $K\in \{K_3\vee P_4, K_4\vee (K_2\cup 2K_1), K_4\vee 4K_1, K_2\vee C_5\}$ or $G$ is isomorphic to one of the graphs $G_1,G_2,$ or $G_3$.
\end{lemma}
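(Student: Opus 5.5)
Assume $G$ contains none of $K_3\vee P_4$, $K_4\vee(K_2\cup 2K_1)$, $K_4\vee 4K_1$, $K_2\vee C_5$ as an induced subgraph; we show that $G\cong G_1$, $G_2$ or $G_3$. By symmetry let $A_1\neq\emptyset$. As in the proof of Lemma~\ref{lem-prprtiesofA}, $(\mathbb{O}\ref{O4})$, $(\mathbb{O}\ref{O2b})$ and Lemma~\ref{lem-Rmpty} give $B_2\cup B_4\cup A_3\cup A_4\cup R=\emptyset$. Lemma~\ref{lem-prprtiesofA}(a) gives $B_3\neq\emptyset$; then $(\mathbb{O}\ref{O4})$ gives $A_2\cup A_5=\emptyset$. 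Since the forbidden subgraphs are excluded, Lemma~\ref{lem-prprtiesofA}(a) also gives $B_1\cup B_5=\emptyset$. Hence
\[
V(G)=\mathcal{C}\cup A_1\cup B_3\cup D .
\]
Lemma~\ref{lem-prprtiesofA}(c) makes $D_1$ complete to $D_2\cup D_5$, and $(\mathbb{O}\ref{O3})$ makes $A_1$ complete to $D_1$. Every $a_1\in A_1$ has a neighbor in $B_3$. By Lemma~\ref{lem-prprtiesofA}(b), $a_1$ has at most one neighbor in $D_2\cup D_5$, and that neighbor is shared with its $B_3$-neighbors.

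Next I would pin down $B_3$. Since $A_{1}=A_{3-2}\neq\emptyset$, Lemma~\ref{lem-prprtiesfrB} does not apply directly to $B_3$, so I would redo its degree arguments. Take $b_3\in B_3$. Its neighbors lie in $\{v_3,v_4\}\cup B_3\cup A_1\cup D_2\cup D_3\cup D_4\cup D_5$, and by $(\mathbb{O}\ref{O6})$ it is anticomplete to $D_2$ or to $D_5$. I would then compare $d(b_3)$ with $d(v_3)$ and $d(v_4)$ using $|d(u)-d(v)|\le1$. The goal is that $b_3$ is complete to $D_3\cup D_4$ and that $B_3$ is a clique. Along the way I expect $C_4$/$P_6$/$C_6$ arguments to show $[A_1,B_3]$ is complete, or at least that $A_1$ and $B_3$ are small. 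Then I would compare $d(a_1)$ with $d(v_1)$. We have $N[v_1]=\{v_1,v_2,v_5\}\cup A_1\cup D_1\cup D_2\cup D_5$, while
\[
N[a_1]\subseteq\{v_1\}\cup A_1\cup D_1\cup (N(a_1)\cap B_3)\cup\{\text{at most one vertex of }D_2\cup D_5\}.
\]
This forces $|N(a_1)\cap B_3|\ge |D_2|+|D_5|$ roughly. It also forces $A_1$ to be a clique.

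The core is a degree-counting case analysis on $(|A_1|,|B_3|,|D_1|,\dots,|D_5|)$, using $8\le d\le 9$ at $v_1,\dots,v_5$, at $a_1$ and $b_3$, and at the $D$-vertices. The excluded subgraphs supply size bounds in the style of Lemma~\ref{lem-rsltonBwhnAempty} and Lemma~\ref{lem-AmptyimpliesBmpty}:
\begin{itemize}
\item $|D_1|\ge 3$ together with $A_1$ and some $b_3$ yields $K_4\vee(K_2\cup 2K_1)$ or $K_4\vee 4K_1$ around $v_1$;
\item large $D_3$ or $D_4$ together with $b_3$ yields $K_3\vee P_4$;
\item two neighbors of $a_1$ in $D_2\cup D_5$ yield $K_2\vee C_5$, by Lemma~\ref{lem-prprtiesofA}(b).
\end{itemize}
With these bounds I would split on whether $N(a_1)\cap(D_2\cup D_5)$ is empty or a single vertex, and then on which of $D_2,D_5$ the vertices of $B_3$ see. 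In each branch the degree equations should either give a contradiction or fix all set sizes and adjacencies, which should match $G_1$, $G_2$ or $G_3$ from Figure~\ref{figG1G2G3G4}.

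The main obstacle is this final enumeration. Several adjacencies are a priori undetermined: $D_2$–$D_3$, $D_3$–$D_4$, $D_4$–$D_5$, $B_3$–$D_2/D_5$ and $A_1$–$B_3$. I would first use the ``closed neighborhood containment'' trick, $N[d_i]\subseteq N[v_i]\cup(\text{few extra vertices})$, to make each $D_i$ nearly complete to its neighbors, as in Claim~\ref{claim-thm4claim1}. Only then would I do the numerics, so that the surviving configurations are few enough to identify with $G_1,G_2,G_3$ directly.
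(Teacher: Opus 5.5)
Your reduction to $V(G)=\mathcal{C}\cup A_1\cup B_3\cup D$ matches the paper's first step. Beyond that you give a plan rather than a proof, and several of the tools you name would not work.

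Comparing $d(b_3)$ with $d(v_3)$ or $d(v_4)$ cannot show that $b_3$ is complete to $D_3\cup D_4$. The neighbours of $b_3$ in $A_1$ lie outside $N[v_3]\cup N[v_4]$, so the comparison only bounds how many vertices of $N[v_3]$ (or $N[v_4]$) $b_3$ misses by roughly $|N(b_3)\cap A_1|$. The paper instead uses that $\{b_3,v_4,d_3,v_2,v_1,a_1\}$ induces a $C_6$ or $P_6$ whenever $b_3d_3\notin E(G)$ (Claim~\ref{claim-5}).

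The branch where $a_1$ has a neighbour $d_5\in D_5$ (or in $D_2$) should not be enumerated; it is impossible. Lemma~\ref{lem-prprtiesofA}(b),(c) together with the $P_6$ $a_1'a_1d_5v_4v_3v_2$ give $N[a_1]\cup\{v_4,v_5\}\subseteq N[d_5]$, contradicting the degree condition (Claim~\ref{claim-6}). Counting then gives $|D_2|,|D_5|\le 1$ and $|D_1|\ge 3$. Finally, an induced $K_4\vee(K_2\cup 2K_1)$ on $D_5\cup D_1\cup\{v_5,v_1,a_1,v_2\}$ forces $D_2=D_5=\emptyset$.

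Your first bullet is wrong as stated. By $(\mathbb{O}\ref{O5})$, $b_3$ is anticomplete to $\{v_1\}\cup D_1$, and $G_1,G_2,G_3$ themselves have $|D_1|=5$ with $A_1,B_3\neq\emptyset$. The correct configuration joins $\{v_1\}\cup D_1'$, with $|D_1'|=3$, to $\{v_2,v_5,a_1,a_1'\}$ for two vertices $a_1,a_1'\in A_1$; this forces $|A_1|=1$. Comparing $d(a_1)$ with $d(v_1)$ alone does not make $A_1$ a clique.

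The decisive gap is that you never use the hypothesis that $G$ has no clique-cutset. Without it the lemma is false, so degree counting, $C_4/P_6/C_6$ arguments and the four excluded graphs cannot finish on their own. Here is the example:
\begin{itemize}
\item Take $\mathcal{C}$, $A_1=\{a_1\}$, a $5$-clique $D_1$ complete to $a_1$, $D_2=D_5=\emptyset$, and $2$-cliques $D_3,D_4$.
\item Let $B_3=\{b,b',b''\}$, with $\{v_3,v_4\}\cup D_3\cup D_4\cup B_3$ a clique.
\item Make $a_1$ adjacent to $b'$ and $b''$ but not to $b$; this is $G_2$ minus the edge $a_1b$.
\end{itemize}
Every degree is $8$ or $9$. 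The graph is a blowup of an $8$-vertex graph that one checks is $(P_6,C_4,C_6)$-free. It contains none of $K_3\vee P_4$, $K_4\vee(K_2\cup 2K_1)$, $K_4\vee 4K_1$, $K_2\vee C_5$. It is not $G_1$, $G_2$ or $G_3$: it has $18$ vertices, eight of them of degree $8$. The only thing excluding it is that $N(b)$ is a clique-cutset.

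This is exactly how the paper shows $[A_1,B_3]$ is complete, and so your hoped-for $C_4/P_6/C_6$ argument for that step cannot exist. Once $|A_1|=1$, Claim~\ref{claim-5} holds and $B_3$ is a clique (Claim~\ref{claim-9}), a vertex $b\in B_3\setminus N(a_1)$ has $N(b)\subseteq\{v_3,v_4\}\cup D_3\cup D_4\cup B_3$. That set is a clique separating $b$ from $v_1$. Only with $D_2=D_5=\emptyset$, $|A_1|=1$ and $[A_1,B_3]$ complete does the final count ($|D_1|=5$, $2\le |B_3|\le 3$, $1\le |D_3|,|D_4|\le 2$) identify $G_1$, $G_2$, $G_3$.
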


\begin{proof} 
Let $A\neq \emptyset$. Then $A_i\neq \emptyset$ for some $i\in [5]$. Without loss of generality, let $i=1$, that is $A_1\neq \emptyset$. Then by $(\mathbb{O}\ref{O4})$ and $(\mathbb{O}\ref{O2b})$, we have $B_2\cup B_4\cup A_3\cup A_4 = \emptyset$. By $(\mathbb{O}\ref{O3})$, $[A_1, D_1]$ is complete. We assume that $G$ does not contain any graph in $\{K_3\vee P_4, K_4\vee (K_2\cup 2K_1), K_4\vee 4K_1, K_2\vee C_5\}$ as an induced subgraph; otherwise, we are done. By Lemma~\ref{lem-prprtiesofA}(a), we have $B_1\cup B_5 = \emptyset$ and every vertex of $A_1$ has a neighbor in $B_3$ implying that $B_3\neq \emptyset$. Then by  $(\mathbb{O}\ref{O4})$, we have $A_2\cup A_5 = \emptyset$. Also, by Lemma~\ref{lem-Rmpty}, we have $R = \emptyset$. So $V(G) = \mathcal{C}\cup A_1\cup B_3\cup D$. Note that by $(\mathbb{O}\ref{O1})$ and $(\mathbb{O}\ref{O5})$, $D_i$ is a clique and $D_i$ is anticomplete to $D_{i-2}\cup D_{i+2}\cup B_{i+2}$ for every $i\in[5]$.
	
\begin{claim}\label{claim-5}
$[B_3, D_3\cup D_4]$ and $[D_3, D_4]$ are complete.
	\end{claim}
	
\begin{claimproof}
For the sake of contradiction, assume that $b_3\in B_3$, $d_3 \in D_3$, and $b_3d_3\notin E(G)$. By $(\mathbb{O}\ref{O3})$, we have $[A_1,D_3]=\emptyset$. Then for $a_1\in A_1$, $\{b_3, v_4, d_3, v_2, v_1, a_1\}$ induces a $C_6$ or a $P_6$ in $G$ depending on whether $b_3$ is adjacent to $a_1$; so $[B_3, D_3]$ is complete. Similarly, $[B_3, D_4]$ is complete. Now since $B_3\neq \emptyset$, there exists a vertex $b\in B_3$ that is complete to $D_3\cup D_4$. If there exist $d_3\in D_3$ and $d_4\in D_4$ such that $d_3d_4\notin E(G)$, then $\{d_3, b, d_4, v_5, v_1, v_2\}$ induces a $C_6$ in $G$; so $[D_3, D_4]$ is complete.
	\end{claimproof}
	
\begin{claim}\label{claim-6}
$[A_1, D_2\cup D_5] = \emptyset$.
\end{claim} 
	
\begin{claimproof} 
For the sake of contradiction, assume that $a_1\in A_1$ and $d_5\in D_5$ such that $a_1d_5\in E(G)$. Using Lemma~\ref{lem-prprtiesofA}(b), $|N(a_1)\cap(D_2\cup D_5)|\leq 1$ and every neighbor of $a_1$ in $B_3$ is also a neighbor of $d_5$, that is $N(a_1)\cap B_3\subseteq N(d_5)$. Then $N(a_1)\cap D_2 = \emptyset$. Let $a_1'\in A_1\setminus \{a_1\}$ such that $a_1a_1'\in E(G)$. If $a_1^\prime d_5\notin E(G)$, then $\{a_1^\prime, a_1, d_5, v_4, v_3, v_2\}$ induces a $P_6$ in $G$; so $N(a_1)\cap A_1\subseteq N(d_5)$. By Lemma~\ref{lem-prprtiesofA}(c), $[D_1, D_5]$ is complete. Now by using $(\mathbb{O}\ref{O3})$ and the fact that $N(a_1)\cap D_2 = \emptyset$, we have $N[a_1]\subseteq \{v_1\}\cup D_1\cup A_1\cup D_5\cup B_3$. Then $N[a_1]\cup \{v_4, v_5\} \subseteq N[d_5]$ implying that $|d(d_5) - d(a_1)|\geq 2$, a contradiction. Therefore, $[A_1, D_5] = \emptyset$. Similarly, we can show that $[A_1, D_2] = \emptyset$. 
	\end{claimproof}
	
\begin{claim}\label{claim-7}
$|D_5|\leq 1$ and $|D_2|\leq 1$. 
\end{claim}
	
\begin{claimproof} 
For the sake of contradiction, let $|D_5|\geq 2$. Let $a_1 \in A_1$. By Claim~\ref{claim-6}, we have $N(a_1)\cap (D_2\cup D_5) = \emptyset$. Then by $(\mathbb{O}\ref{O3})$, we have $N[a_1]\subseteq \{v_1\} \cup D_1\cup A_1\cup B_3$. We have $N[v_1] = \{v_1, v_2, v_5\} \cup D_1\cup A_1\cup D_2\cup D_5$. Then, since $|d(v_1) - d(a_1)|\leq 1$, we have $|N(a_1)\cap B_3|\geq |D_5| + |D_2| + 1 \geq 3$. Then we must have $|B_3|\geq 3$. Now $d(v_4) = 2 + |D_3| + |B_3| + |D_4| + |D_5|\geq 2 + |D_3| + 3 + |D_4| + 2$. Since $d(v_4)\leq 9$, we have $|D_3| + |D_4|\leq 2$. Let $b_3\in B_3$ be a neighbor of $a_1$. Since $N(a_1)\cap (D_2\cup D_5) = \emptyset$, by using Lemma~\ref{lem-prprtiesofA}(b), we have $N(b_3)\cap (D_2\cup D_5) = \emptyset$. Now by $(\mathbb{O}\ref{O5})$, $N[b_3]\subseteq \{v_3, v_4\} \cup B_3\cup D_3\cup D_4\cup A_1$. We have $N[v_4] = \{v_3, v_4, v_5\}\cup B_3\cup D_3\cup D_4\cup D_5$. Then, since $|d(v_4) - d(b_3)|\leq 1$, we have $|N(b_3)\cap A_1|\geq |D_5|\geq 2$. Then $|A_1|\geq 2$. Now we have $d(v_1) = 2 + |D_1| + |A_1| + |D_2| + |D_5| \geq 2 + |D_1| + 2 + |D_2| + 2 $. Since $d(v_1)\leq 9$, we have $|D_1| + |D_2| \leq 3$. Then $d(v_2) = 2 +  |D_1| + |D_2| + |D_3| \leq 2 + 3 + 2 = 7$, a contradiction. Therefore, $|D_5| \leq 1$. Similarly, we can show that $|D_2|\leq 1$.
	\end{claimproof}
	
\begin{claim}\label{claim-8} 
$|D_1|\geq 3$ and  $|A_1| = 1$.
	\end{claim}
	
	\begin{claimproof}
For the sake of contradiction, let $|D_1|\leq 2$. By Claim~\ref{claim-7}, we have $|D_5|\leq 1$ and $|D_2|\leq 1$. Then $8\leq d(v_2) = 2 + |D_1| + |D_2| + |D_3| \leq 2 + 2 + 1 + |D_3|$ implying that $|D_3|\geq 3$. Also, $8\leq d(v_5) = 2 + |D_1| + |D_4| + |D_5|\leq 2 +  2 + |D_4| + 1$ implying that $|D_4|\geq 3$. Then using Claim~\ref{claim-5} and the fact that $B_3\neq \emptyset$, $G[D_3\cup B_3\cup D_4\cup \{v_3, v_4\}]$ contains a clique of size $9$ in $G$, a contradiction to the fact that $\omega(G)\leq 8$. So $|D_1|\geq 3$. Now let $|A_1|\geq 2$. Then $G[D_1\cup A_1\cup \{v_5, v_1, v_2\}]$ contains an induced $K_4\vee (K_2 \cup 2K_1)$ or  $K_4\vee (4K_1)$, a contradiction. So $|A_1| = 1$.
	\end{claimproof}
	
\begin{claim}\label{claim-9}
$B_3$ is a clique.
	\end{claim}
	
	\begin{claimproof} 
Let $b\in B_3$ be arbitrary. By $(\mathbb{O}\ref{O5})$, for every $b\in B_3$, we have $N[b]\subseteq \{v_3, v_4\}\cup A_1 \cup D_2\cup B_3\cup D_3\cup D_4\cup D_5$. By Claim~\ref{claim-8}, we may assume that $A_1 = \{a_1\}$. First, assume that $ba_1\notin E(G)$.  By $(\mathbb{O}\ref{O6})$, $N(b)\cap D_5 = \emptyset$ or $N(b)\cap D_2 = \emptyset$. Let $N(b)\cap D_5 = \emptyset$. Then $N[b]\cup \{v_2\}\subseteq N[v_3]$. Then, since $|d(v_3)- d(b)|\leq 1$, we have $N[b] = \{v_3, v_4\}\cup D_2\cup D_3\cup B_3\cup D_4$. Then $b$ is complete to $B_3\setminus \{b\}$. Similarly, if $N(b)\cap D_2 = \emptyset$, then by comparing $d(b)$ with $d(v_4)$, we can show that $b$ is complete to $B_3\setminus \{b\}$. Hence every $b\in B_3$ which is not a neighbor of $a_1$, is complete to $B_3\setminus\{b\}$. Next assume that $ba_1\in E(G)$. Let $x\in B_3\setminus \{b\}$ such that $xa_1\in E(G)$. Then $bx\in E(G)$; otherwise, $\{b, v_3, x, a_1\}$ induces a $C_4$ in $G$. Let $y\in B_3$ such that $ya_1\notin E(G)$. Then $yb\in E(G)$; otherwise, $\{y, v_4, b, a_1, v_1, v_2\}$ induces a $P_6$ in $G$. Hence every $b\in B_3$ which is a neighbor of $a_1$ is also complete to $B_3\setminus \{b\}$. Therefore, $B_3$ is a clique. 
	\end{claimproof}
    
\begin{claim}\label{claim-1110}
$D_2\cup D_5 = \emptyset$ and $[B_3, A_1]$ is complete. Also, $6\leq |B_3\cup D_3 \cup D_4|\leq 7$, $|D_1| = 5$, $2\leq |B_3|\leq 3$, $1\leq |D_3|\leq 2$, and $1\leq |D_4|\leq 2$.
\end{claim}	

\begin{claimproof}
By Claim~\ref{claim-8}, we may assume that $A_1=\{a_1\}$. Moreover, $|D_1|\geq 3$. By Claim~\ref{claim-6}, $a_1$ is anticomplete to $D_2\cup D_5$. By Lemma~\ref{lem-prprtiesofA}(c), we have $[D_1, D_2 \cup D_5]$ is complete. If $|D_5|\geq 1$, then $G[D_5\cup D_1\cup \{v_5, v_1, a_1, v_2\}]$ contains an induced $K_4\vee (K_2\cup 2K_1)$ in $G$, a contradiction. So $D_5 = \emptyset$. Similarly, $D_2 = \emptyset$. For the sake of contradiction, assume that $b_3\in B_3$ is not a neighbor of $a_1$. Then by $(\mathbb{O}\ref{O5})$, we have $N[b_3]\subseteq \{v_3, v_4\}\cup D_3\cup B_3\cup D_4$. Using Claim~\ref{claim-5} and Claim~\ref{claim-9}, we have $D_3\cup D_4\cup \{v_3, v_4\}$ is a clique-cutset, a contradiction. So $[B_3, A_1]$ is complete. 

Now we have $N[v_3] = \{v_2, v_3, v_4\}\cup B_3\cup D_3\cup D_4$. Since $8 \leq d(v_3)\leq 9$, we have $6\leq |B_3\cup D_3 \cup D_4|\leq 7$. We have $N[v_1] = \{v_1, v_2, v_5, a_1\}\cup D_1$ and $N[a_1] = \{v_1, a_1\}\cup D_1\cup B_3$. Since $|d(v_1) - d(a_1)|\leq 1$, we have $|B_3|\leq 3$. Also, since $8\leq d(v_1)\leq 9$, we have $5\leq |D_1|\leq 6$. Suppose that $|D_1| = 6$. Then, since $d(v_5)\leq 9$, we have $|D_4|\leq 1$. Also, since $d(v_2)\leq 9$, we have $|D_3|\leq 1$. Then $d(v_3) = 2 + |D_3| + |B_3| + |D_4| \leq 2 + 1 + 3 + 1 \leq 7$, a contradiction. So we have $|D_1| = 5$. Since $8\leq d(a_1) = 1 + |D_1| + |B_3| = 1 + 5 + |B_3|$, we have $2\leq |B_3|$. So $2\leq |B_3|\leq 3$. Lastly, since $8\leq d(v_2)\leq 9$ and $8\leq d(v_5)\leq 9$, we have $1\leq |D_3|\leq 2$ and $1\leq |D_4|\leq 2$, respectively.
\end{claimproof}

Now we use Claim~\ref{claim-1110} to conclude our proof. We have $|D_1|=5$ and $2\leq |B_3|\leq 3$. Let $|B_3| = 2$. Then, since $|B_3\cup D_3\cup D_4|\geq 6$, $|D_3|\leq 2$, and $|D_4|\leq 2$, we have $|D_3| = |D_4| = 2$ and $G$ is isomorphic to $G_1$. Now let $|B_3|= 3$. If $|B_3\cup D_3\cup D_4| = 7$, then $|D_3| = |D_4| = 2$ and $G$ is isomorphic to $G_2$. If $|B_3\cup D_3\cup D_4| = 6$, then either $|D_3| = 1$ or $|D_4| = 1$ and $G$ is isomorphic to $G_3$. This completes the proof of Lemma~\ref{lem-Aempty}.
\end{proof}

\subsection{Proofs of Theorem~\ref{thm-P6C4C6(copy)} and Theorem~\ref{thm-p6c4c6characterization}}\label{subsec-proofofthm3and4}

Now we use the results obtained above in this section to prove Theorem~\ref{thm-p6c4c6characterization}. Then by using it, we prove Theorem~\ref{thm-P6C4C6(copy)}.   

 \begin{proof} [\textbf{\textup{Proof of Theorem~\ref{thm-p6c4c6characterization}}}]
Let $G$ be a $(P_6,C_4,C_6)$-free graph such that $8\leq d(x)\leq 9$ for every $x\in V(G)$ and $G$ does not contain any clique-cutset. Assume that $G$ is an imperfect graph; otherwise, we are done. Then by the Strong Perfect Graph Theorem~\cite{strngperfectgraph}, $G$ contains an induced $C_5$. Let $V(G)$ be partitioned as $\mathcal{C}\cup A\cup B\cup D\cup R$ as defined in~(\ref{partition}), where $\mathcal{C}= \{v_1, v_2, v_3, v_4, v_5\}$ induces a $C_5$ in $G$ with the edge set $\{v_1v_2, v_2v_3, v_3v_4, v_4v_5, v_5v_1 \}$. By Lemma~\ref{lem-Rmpty}, we have $R = \emptyset$. First, suppose that $A = \emptyset$. Then by Lemma~\ref{lem-AmptyimpliesBmpty} and Lemma~\ref{thm-P6C4C6}, $G$ contains $K$ as an induced subgraph for some $K\in \{K_3\vee P_4, K_4\vee 2K_2\}$ or $G$ is isomorphic to $G_4$. Now suppose $A\neq \emptyset$. Then by Lemma~\ref{lem-Aempty}, $G$ contains an induced $K$ for some $K\in \{K_3\vee P_4, K_2\vee C_5, K_4\vee (K_2\cup 2K_1), K_4\vee 4K_1 \}$ or $G$ is isomorphic to one of the graphs $G_1$, $G_2$, or $G_3$. This completes the proof of Theorem~\ref{thm-p6c4c6characterization}. 
     \end{proof}
	
\begin{proof}[\textbf{\textup{Proof of Theorem~\ref{thm-P6C4C6(copy)}}}]
Let $\mathcal{G}^*$ denote the class of $(P_6,C_4,C_6)$-free graphs. Suppose, for the sake of contradiction, that there exists a counterexample to Theorem~\ref{thm-P6C4C6(copy)}, that is $\mathcal{G^*}\cap \mathcal{C}_k\neq \emptyset$ for some $k\geq 9$. By Lemma~\ref{lem-cntrbkc}, there exists an imperfect vertex-critical graph $G\in \mathcal{G^*}\cap \mathcal{C}_9$ with $|d(u)-d(v)|\leq 1$ for any $u,v\in V(G)$. Also, we have $\omega(G)\leq 8$ and $\Delta(G)=\chi(G) = 9$. By Theorem~\ref{thm-p6c4c6characterization} and Lemma~\ref{lem-nod1choosable}, $G$ is isomorphic to one of the graphs $G_1$, $G_2$, $G_3$, or $G_4$. Since $\omega(G_2)>8$ and $\Delta(G_4)<9$, $G$ is isomorphic to $G_1$ or $G_3$. Note that both $G_1$ and $G_3$ are $8$-colorable, which is a contradiction to the fact that $\chi(G) = 9$. So such a counterexample $G$ does not exist and hence Theorem~\ref{thm-P6C4C6(copy)} follows.
    \end{proof}	

\section{Extending to $(P_6, C_4)$-free graphs}\label{sec-4}
In this section, we extend the result obtained in Section~\ref{sec-3} to the the entire class of $(P_6, C_4)$-free graphs. For a blowup $H$ of $G$, we use the term \emph{constituent-clique} in $H$ corresponding to a vertex $v$ of $G$ to denote the clique in $H$ that replaces (or corresponds to) $v$. Now we state a useful structural result on $(P_6, C_4)$-free graphs obtained by Karthik and Maffray~\cite{kartikandmaffray}. They showed that a $(P_6, C_4)$-free graph that does not contain a clique-cutset or universal vertex must belong to some special classes of graphs. We omit the definitions of band, belt, boiler, and $F_{k,l}$ mentioned in the following theorem since our approach does not require these. The interested reader may refer to~\cite{kartikandmaffray} for their definitions. We refer to Figure~\ref{fig1} for the special graphs $F_1$, $F_3$, $H_1$, $H_2$, $H_3$, and $H_4$ which were used by the authors. 

\begin{figure}[t]
	\centering
			\includegraphics[width= 0.8 \textwidth]{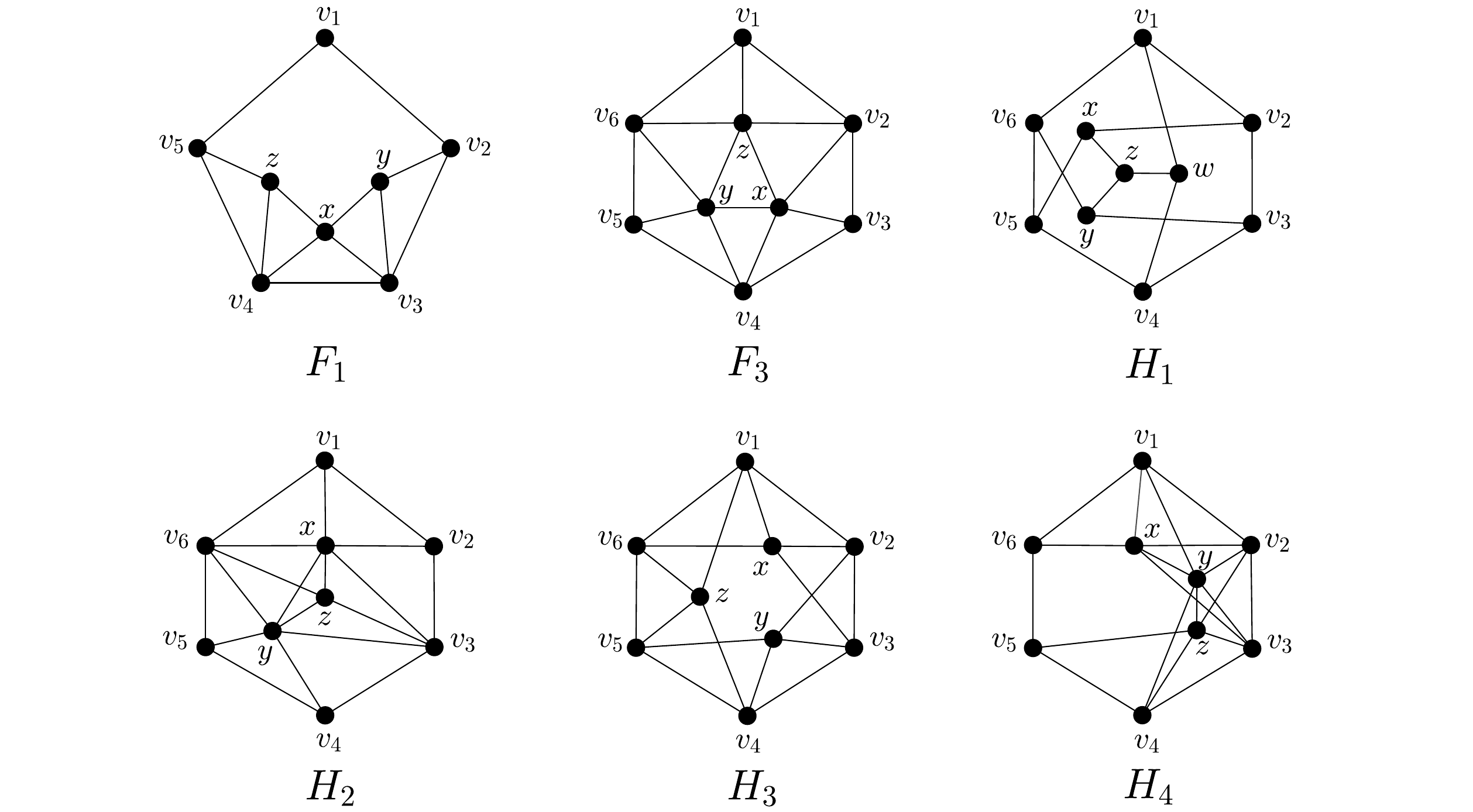}
		\caption {$F_1$, $F_3$, $H_1$, $H_2$, $H_3$, $H_4$.}
	\label{fig1}
	\end{figure}

\begin{theorem}[\cite{kartikandmaffray}]\label{thm-karthikmaffray}
	Let $G$ be a $(P_6, C_4)$-free graph that has no clique-cutset and no universal vertex. Then the following hold:
	\begin{enumerate}
	\item If $G$ contains an $F_3$, then $G$ is a blowup of $F_3$. 
	\item If $G$ contains an $F_1$ and no $F_3$, then $G$ is a band.
	\item If $G$ is $F_1$-free, and $G$ contains an induced $C_6$, then $G$ is a blowup of one of the graphs $H_1$, $H_2$, $H_3$, $H_4$. 
	\item If $G$ is $C_6$-free, and $G$ contains an $F_2$, then $G$ is a blowup of either $H_5$ or $F_{k, l}$, for some integer $k, l \geq 1$.
	\item  If $G$ contains no $C_6$ and no $F_2$ and $G$ contains a $C_5$, then $G$ is either a belt or a boiler.
	\end{enumerate}

\end{theorem}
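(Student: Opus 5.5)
This theorem is quoted from~\cite{kartikandmaffray}, and in the paper it is used as a black box. If I had to prove it from scratch, I would follow the same template used in Section~\ref{sec-3}: fix a small induced ``seed'' subgraph, partition $V(G)$ by how each vertex attaches to the seed, and use $P_6$- and $C_4$-freeness, the absence of a clique-cutset, and the absence of a universal vertex to force the attachment classes into cliques that are pairwise complete or anticomplete. The five items are nested by which seed is present: $F_3$, then $F_1$, then $C_6$, then $F_2$, then only $C_5$. So I would prove them in this order, each time assuming that the seeds handled earlier are absent.

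\textbf{Items (a) and (c).} For an induced $F_3$ or $C_6$ (call it $S$), Lemma~\ref{lem-C5C6aredominating} and its analogue for $F_3$ (proved the same way) give that $S$ is dominating. Otherwise $G$ is a blowup of the Petersen graph joined with a clique. The Petersen blowup is not on the list in (c), so in that case I would check that it either contains an $F_1$ or $F_3$ or has a universal vertex, which is excluded.
\begin{itemize}
\item \emph{Possible traces.} For each $x\notin S$, the set $N(x)\cap S$ must avoid creating a $C_4$ with two nonadjacent vertices of $S$, and must avoid completing a $P_6$ through $S$. This leaves a short list of admissible traces.
\item \emph{Each class is a clique.} Two nonadjacent vertices with the same trace would give a $C_4$ (through two common neighbours in $S$) or a $P_6$.
\item \emph{Classes are complete or anticomplete.} Two classes with related traces are complete or anticomplete to each other, by short $C_4$/$P_6$ arguments of the kind used in $(\mathbb{O}1)$--$(\mathbb{O}7)$.
\item \emph{Conclusion.} Reading off the quotient graph gives a blowup of $F_3$ in (a), and of one of $H_1,\dots,H_4$ in (c), since $F_1$ is excluded there.
\end{itemize}
No universal vertex is needed to exclude the class that would be complete to everything, since that class would be a universal clique; the no-clique-cutset hypothesis kills the pendant clique configurations.

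\textbf{Items (b), (d) and (e).} For (b), with an $F_1$ present and no $F_3$, I would run the same partition around $F_1$. Here the attachment classes can form chains of nested neighbourhoods rather than plain cliques; this is precisely the ``band'' structure. For (d) and (e), $G$ is $C_6$-free, so the partition~(\ref{partition}) around a $C_5$ applies together with $(\mathbb{O}1)$--$(\mathbb{O}9)$.
\begin{itemize}
\item With an $F_2$ present, the sets $A_i$ and $B_i$ that are nonempty are pinned down. Then $G$ is a blowup of $H_5$, or of $F_{k,l}$, with $k,l$ counting independent copies of the pendant pattern.
\item With neither $F_2$ nor $C_6$, only the $D_i$'s and a restricted family of $A/B$ vertices remain. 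The $D_i$'s need not be pairwise complete; their neighbourhoods are nested. This yields belts (all $A,B$ empty) or boilers.
\end{itemize}

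\textbf{Main obstacle.} I expect the hardest part to be the band and belt/boiler cases. There the adjacency between consecutive classes is not all-or-nothing, so one must prove that neighbourhoods within $D_i$ toward $D_{i\pm1}$ are linearly ordered by inclusion. I would attack this with the $C_4$ argument used in Claim~\ref{claim-thm4claim1}: two vertices of $D_i$ with incomparable neighbourhoods in $D_{i+1}$ give an induced $C_4$. After that, the main work is to verify that the resulting ordered structure matches the definitions exactly.
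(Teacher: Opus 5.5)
\textbf{Context.} The paper does not prove this theorem. It quotes it from Karthick and Maffray and uses it, together with the refinement in Theorem~\ref{thm-rsltonP6C4}, as a black box. So there is no in-paper route to compare yours against.

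\textbf{The one concrete step you commit to fails.} In item (c), when the $C_6$ is not dominating, Lemma~\ref{lem-C5C6aredominating} gives $G=K\vee P$, with $K$ a clique and $P$ a Petersen blowup. If $K\neq\emptyset$ there is a universal vertex, which is excluded. For $K=\emptyset$ you propose to show that $G$ contains $F_1$, $F_3$, or a universal vertex. The Petersen graph itself refutes this:
\begin{itemize}
\item it has girth $5$, so it is $C_4$-free and every $6$-cycle in it is induced;
\item it is $P_6$-free;
\item it is $3$-connected and triangle-free, so it has no clique-cutset;
\item it has no universal vertex;
\item it contains neither $F_1$ nor $F_3$, since both contain triangles: $x,y,v_3$ in $F_1$, and in $F_3$ the vertices $z$, $v_1$ and any neighbour of $v_1$ other than $z$, because $N[z]\supseteq N[v_1]$.
\end{itemize}
Your premise is also wrong: the Petersen blowup \emph{is} on the list. $H_1$ is the Petersen graph; the proof of Theorem~\ref{thm-mainthm} reads a blowup of $H_1$ as a blowup of the Petersen graph. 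So this case is covered directly by the conclusion of (c), and nothing needs to be excluded.

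\textbf{The hard steps are asserted, and one misplaces the key hypothesis.} In the dominating case of (c), you claim that ``classes with related traces are complete or anticomplete by short $C_4$/$P_6$ arguments.'' This cannot rest on $(P_6,C_4)$-freeness, no clique-cutset, and no universal vertex alone, because the same argument would apply verbatim to graphs containing $F_1$. Indeed, $F_1$ itself contains the induced $C_6$ $v_1v_2yxzv_5$. Around an $F_1$, some parts are joined by only partial adjacency (the dotted lines in Figure~\ref{fig2}); this is why item (b) concludes ``band'' rather than ``blowup.'' So $F_1$-freeness must do real work in that very step, whereas you invoke it only afterwards, to prune the list of quotient graphs.

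\textbf{Items (b), (d), (e).} You name the target classes (band, $H_5$, $F_{k,l}$, belt, boiler) without deriving them or checking them against their definitions.

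\textbf{Overall.} The nested-neighbourhood $C_4$ argument you single out is the right tool for the band and belt cases. As written, though, the proposal is a template for the Karthick--Maffray analysis, not a proof of the theorem.
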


After a careful analysis of the proof of Theorem~\ref{thm-karthikmaffray} in~\cite{kartikandmaffray} and the results obtained in that paper, we conclude the following version of their result which is helpful to us.

\begin{theorem}[\cite{kartikandmaffray}]\label{thm-rsltonP6C4}
	Let $G$ be a $(P_6, C_4)$-free graph containing an induced $C_6$. If $G$ does not contain any clique-cutset and any universal vertex, then one of the following hold:
		\begin{enumerate}
		\item $G$ is a blowup of $F_3$ such that all the constituent-cliques are not empty.
		\item $G$ is $F_3$-free and contains an induced $F_1$. 
		\item $G$ is a blowup of one of the graphs $H_1$, $H_2$, $H_3$, and $H_4$, where the constituent-cliques corresponding to $v_i,i\in [6]$ are not empty (see Figure~\ref{fig1}).
			\end{enumerate}
\end{theorem}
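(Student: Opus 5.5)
The plan is to derive Theorem~\ref{thm-rsltonP6C4} directly from Theorem~\ref{thm-karthikmaffray} by sorting cases. Let $G$ be $(P_6,C_4)$-free, contain an induced $C_6$, and have neither a clique-cutset nor a universal vertex. Then Theorem~\ref{thm-karthikmaffray} applies. Since $G$ contains a $C_6$, items (d) and (e) of Theorem~\ref{thm-karthikmaffray} are excluded at once. We are left with three cases: $G$ contains an $F_3$; $G$ contains an $F_1$ but no $F_3$; or $G$ is $F_1$-free.

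In the first case, Theorem~\ref{thm-karthikmaffray}(a) says $G$ is a blowup of $F_3$. We still have to show that every constituent-clique is nonempty. The natural argument is this: $G$ contains an induced copy of $F_3$ itself, and the blowup structure produced in~\cite{kartikandmaffray} is built around that copy, so each vertex of $F_3$ lies in its own constituent-clique. Alternatively, if some clique were empty, then $G$ would be a blowup of a proper induced subgraph of $F_3$. I would then check that every such subgraph either fails to contain $F_3$ or forces a clique-cutset or a universal vertex, which contradicts the hypotheses.

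In the second case, conclusion (b) of Theorem~\ref{thm-rsltonP6C4} holds verbatim, so nothing needs to be done.

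In the third case, Theorem~\ref{thm-karthikmaffray}(c) gives that $G$ is a blowup of one of $H_1,\dots,H_4$. The vertices $v_1,\dots,v_6$ of each $H_j$ form the induced $C_6$ in Figure~\ref{fig1}. The point to justify is that the constituent-cliques of $v_1,\dots,v_6$ are nonempty. In Karthik and Maffray's proof, the decomposition is built by fixing an induced $C_6$ of $G$, whose vertices are then placed in the cliques of $v_1,\dots,v_6$. So these cliques contain the hole vertices and are nonempty by construction, while the remaining cliques may be empty.

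The main obstacle is that Theorem~\ref{thm-karthikmaffray} as stated does not record which constituent-cliques are nonempty. Extracting this requires going back into the proof in~\cite{kartikandmaffray} and confirming that the induced $F_3$ (respectively, the induced $C_6$) is exactly the skeleton from which the cliques are grown. I would first try the cleaner self-contained route: assume some required clique is empty, and show that the resulting graph has no induced $C_6$ (respectively, no $F_3$), or else has a clique-cutset or a universal vertex. That would be a contradiction.
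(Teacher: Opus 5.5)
Your proposal takes the same approach as the paper: the paper also reads off Theorem~\ref{thm-karthikmaffray} case by case (items (d) and (e) are excluded by the induced $C_6$, and item (b) carries over verbatim), and it justifies the nonemptiness of the required constituent-cliques only by appeal to the proof in \cite{kartikandmaffray}, where the blowup is built around a fixed induced $F_3$, respectively $C_6$. One caution about the self-contained fallback you would ``first try'' for $H_1,\dots,H_4$: the Petersen graph minus one hexagon vertex is a blowup of $H_1$ with an empty $v_i$-clique, yet it still contains induced $C_6$'s and has neither a clique-cutset nor a universal vertex, so no contradiction is available there and you must instead relabel so that an induced $C_6$ of $G$ sits on $v_1,\dots,v_6$ (for $F_3$ the fallback does work, provided no two adjacent vertices of $F_3$ have the same closed neighbourhood, since then an induced $F_3$ in a blowup of $F_3$ meets every constituent-clique).
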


\begin{lemma}
\label{lem-nouniversal} 
Let $G$ be a $\Delta$-critical graph. Then $G$ has no universal vertex.
\end{lemma}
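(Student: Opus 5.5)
The plan is to argue by contradiction, reducing to Brooks' Theorem (Theorem~\ref{thm-Brooks}) applied to $G-u$ for a universal vertex $u$. Throughout, I read ``$\Delta$-critical'' as in the setting of Lemma~\ref{lem-cntrbkc}: $G$ is vertex-critical, $\chi(G)=\Delta(G)=\Delta\ge 9$, and $\omega(G)\le \Delta-1$, i.e.\ $G\in\mathcal{C}_\Delta$ is vertex-critical.

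Suppose, to the contrary, that $u$ is a universal vertex of $G$, and let $H=G-u$. The first step is to determine $\chi(H)$ and $\Delta(H)$.
\begin{itemize}
\item Since $u$ is adjacent to every vertex of $H$, any coloring of $G$ gives $u$ a color not used on $H$. Hence $\chi(G)=\chi(H)+1$, so $\chi(H)=\Delta-1$.
\item Every vertex $v\neq u$ satisfies $d_G(v)\le\Delta$ and is adjacent to $u$. Hence $d_H(v)\le \Delta-1$, so $\Delta(H)\le\Delta-1$.
\item We also have $\omega(H)=\omega(G)-1\le \Delta-2$, because $u$ together with a maximum clique of $H$ forms a clique of $G$.
\end{itemize}

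The second step applies Brooks. We now have $\chi(H)=\Delta-1\ge \Delta(H)$, and we split into two cases.
\begin{itemize}
\item If $\Delta(H)=\Delta-1\ge 8\ge 3$, then Theorem~\ref{thm-Brooks} together with $\omega(H)\le\Delta-2<\Delta(H)$ gives $\chi(H)\le\Delta(H)=\Delta-1$. This is not yet a contradiction, so a stronger statement is needed. Since $\chi(H)=\Delta(H)$ exactly, $H$ is a graph with $\Delta(H)=\chi(H)=\Delta-1$ and $\omega(H)<\Delta-1$, so $H\in\mathcal{C}_{\Delta-1}$.
\item If $\Delta(H)\le\Delta-2$, then $\chi(H)=\Delta-1\ge\Delta(H)+1$. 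Brooks' theorem (which applies since $\Delta(H)\ge\chi(H)-1\ge 7\ge3$) forces $\omega(H)>\Delta(H)\ge \Delta-2$, i.e.\ $\omega(H)\ge\Delta-1$. This contradicts $\omega(H)\le\Delta-2$.
\end{itemize}

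The remaining case $\Delta(H)=\Delta-1$, where $H\in\mathcal{C}_{\Delta-1}$, is the main obstacle. Here I would use minimality: in the paper's application $G$ is taken from Lemma~\ref{lem-cntrbkc} as a minimum counterexample in $\mathcal{C}_9$ within a hereditary class. Then $H$ lies in the same hereditary class and in $\mathcal{C}_8$; that alone is not a contradiction, since $\mathcal{C}_8\neq\emptyset$. So I would instead use the degree condition directly.
\begin{itemize}
\item Every vertex of $G$ has degree at least $\chi(G)-1=\Delta-1$ by Lemma~\ref{lem-mindegnocliquectset}.
\item Since $u$ is universal, $|V(G)|=\Delta+1$.
\item A graph on $\Delta+1$ vertices with $\chi=\Delta$ has complement with only few non-edges. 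Indeed, $\chi(G)=\Delta=n-1$ means every color class is a singleton except one pair, so $\overline{G}$ has matching number exactly $1$.
\item The complement of $G$ has maximum degree at most $1$ (because $\delta(G)\ge\Delta-1=n-2$), so $\overline{G}$ is a matching. Its matching number is $1$, so $\overline{G}$ has exactly one edge.
\item Then $G$ is $K_{\Delta+1}$ minus an edge, which contains a clique of size $\Delta$, contradicting $\omega(G)\le\Delta-1$.
\end{itemize}

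This last counting argument in fact handles all cases at once. I would present it as the main proof, with the Brooks reduction only as motivation.
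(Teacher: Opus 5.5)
Your counting argument is correct and is essentially the paper's proof: both use $|V(G)|=\Delta+1$, the bound $\delta(G)\ge\chi(G)-1=\Delta-1$ from Lemma~\ref{lem-mindegnocliquectset}, and $\chi(G)=\Delta$ to force $G$ to be $K_{\Delta+1}$ minus a single edge. The only difference is the final contradiction: the paper observes that $G-v\cong K_\Delta$ contradicts vertex-criticality, while you invoke $\omega(G)\le\Delta-1$, which already follows from vertex-criticality and $\chi(G)=\Delta(G)$, so your stronger reading of the hypothesis costs nothing; the Brooks detour can simply be dropped.
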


\begin{proof}
Suppose the hypothesis and let $\Delta = k$. For the sake of contradiction, assume that $G$ contains an universal vertex. Then $|V(G)| = k + 1$ and $G$ is not a complete graph. Then there exist $u, v\in V(G)$ such that $uv\notin E(G)$. We have $G- \{u, v\}$ is $(k-1)$-chromatic and $|G-\{u,v\}|=k-1$. Then $G-\{u, v\}$ is isomorphic to $K_{k-1}$. By Lemma~\ref{lem-mindegnocliquectset}, $u$ and $v$ are complete to $G-\{u,v\}$. Then $G-\{v\}$ is $k$-chromatic, a contradiction to the fact that $G$ is $k$-critical. Hence $G$ does not contain an universal vertex.
\end{proof}

Now we proceed to prove the Borodin-Kostochka conjecture for the class of $(P_6,C_4)$-free graphs. We denote by $\mathcal{G}$ the class of $(P_6, C_4)$-free graphs. For the sake of contradiction, we assume that the class $\mathcal{G}$ contains a counterexample to the Borodin-Kostochka conjecture implying that $\mathcal{G}\cap \mathcal{C}_k\neq\emptyset$ for some $k\geq 9$. Then by Lemma~\ref{lem-cntrbkc}, there exists an imperfect vertex-critical graph $G\in \mathcal{G}\cap C_9$ containing an induced $C_5$ and $|d(u)-d(v)|\leq 1$ for any $u,v\in V(G)$. By Lemma~\ref{lem-mindegnocliquectset}, $G$ does not contain any clique-cutset. Also, since $G$ is $\Delta$-critical, by Lemma~\ref{lem-nouniversal}, $G$ does not contain any universal vertex. To make the context clear, we mention that, this graph $G$ is taken in the hypotheses of Lemmas~\ref{lem-F3}, \ref{lem-F1}, \ref{lem-band}, \ref{lem-H2}, \ref{lem-H3}, \ref{lem-H4}, and their respective proofs given in the following subsections.
\subsection{Analysis of $G$ around $F_1$ or $F_3$}\label{subsec-GaroundF1orF3}
In this subsection, we analyze the structure of $G$ around $F_1$ or $F_3$.

\begin{lemma}\label{lem-F3}
If $G$ is a blowup of $F_3$, then one of the constituent-cliques in $G$ is empty. 
	\end{lemma}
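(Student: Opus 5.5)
The plan is to argue by contradiction. Suppose $G$ is a blowup of $F_3$ in which every constituent-clique is non-empty. For each vertex $u$ of $F_3$, let $Q_u$ be its constituent-clique and put $q_u=|Q_u|\ge 1$.

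\emph{Step 1 (degrees as linear forms).} Every vertex of $Q_u$ has the same degree in $G$, namely
\[
d_u=(q_u-1)+\sum_{w\in N_{F_3}(u)} q_w .
\]
Since $8\le d(x)\le 9$ for every $x\in V(G)$, this yields the linear constraints $7\le q_u-1+\sum_{w\in N_{F_3}(u)}q_w\le 8$, one for each $u\in V(F_3)$.

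\emph{Step 2 (upper bounds on the $q_u$).} I would bound the clique sizes using $\omega(G)\le 8$ and Lemma~\ref{lem-nod1choosable}. For every clique $T$ of $F_3$, $\omega(G)\le 8$ gives $\sum_{u\in T}q_u\le 8$. Lemma~\ref{lem-nod1choosable} forbids $K_3\vee P_4$, $K_4\vee S$ with $S$ a spanning subgraph of $C_4$, $K_2\vee C_5$ and $K_2\vee\textit{kite}$. In a blowup, a large $Q_u$ (or the union of the cliques over a clique of $F_3$) is complete to the union of its neighbouring cliques. So whenever $N_{F_3}(u)$ contains an induced $P_4$, an induced $C_5$, or a $2K_2$/$4K_1$-type configuration, we get an immediate cap such as $q_u\le 2$ or $q_u\le 3$. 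This turns Step~1 into a finite integer system with small variables.

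\emph{Step 3 (contradiction).} Next I would compare the constraints for adjacent vertices $u,w$ of $F_3$ whose neighbourhoods are nested, the situation most likely to produce a contradiction. If $N_{F_3}[u]\subseteq N_{F_3}[w]$, then $d_w-d_u=\sum_{z\in N[w]\setminus N[u]}q_z\le 1$. Hence $N[w]\setminus N[u]$ consists of at most one vertex, and that vertex has $q=1$. The $C_5$/$C_6$-like part of $F_3$ forces several of these $q$'s to equal $1$. Substituting them back into the Step~1 constraints around the central cycle should force some other $q_u$ above the cap from Step~2, or force a clique of size $9$, or force $d_u\le 7$. If instead the system is feasible, the remaining explicit candidates must be handled directly, either by exhibiting an $8$-colouring (contradicting $\chi(G)=9$) or by exhibiting a clique-cutset or universal vertex. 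This relies on Lemma~\ref{lem-mindegnocliquectset} and Lemma~\ref{lem-nouniversal}.

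The main obstacle I expect is Step~3: solving the integer system cleanly without a large case split. I would first exploit the symmetry of $F_3$ to reduce to one orbit of vertices and use the nested-neighbourhood differences, which are the tightest constraints. Only after that would I enumerate the remaining small vectors $(q_u)$.
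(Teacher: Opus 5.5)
Your Step~3 contains exactly the tool the paper uses. However, you never apply it to the actual graph $F_3$, so as written the proposal does not reach a contradiction. It only predicts that substituting back ``should force'' one, and the fallback enumeration (explicit $8$-colourings, clique-cutsets, universal vertices) is left undone.

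The missing observation is structural, and it finishes the proof at once. In $F_3$ (labelled as in Figure~\ref{fig1}) one has $N_{F_3}[v_1]\subseteq N_{F_3}[z]$ with $N_{F_3}[z]\setminus N_{F_3}[v_1]=\{x,y\}$. This is a nested pair whose difference has \emph{two} vertices, not one. Hence in the blowup $N[z]=N[v_1]\cup X\cup Y$. Your own identity then gives $d(z)-d(v_1)=|X|+|Y|\ge 2$, contradicting $|d(u)-d(v)|\le 1$ from Lemma~\ref{lem-cntrbkc}. This single comparison is the paper's entire proof.

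Your remark that the nested pairs in the ``$C_5$/$C_6$-like part'' merely force some $q$'s to equal $1$ suggests you assumed every nested difference is a single vertex. Checking $F_3$ directly was the step needed. Once it is done, Steps~1--2, the integer system, and the anticipated case analysis are all unnecessary.

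Separately, Step~1 has an off-by-one slip. Since $d_u=q_u-1+\sum_{w\in N_{F_3}(u)}q_w$ and $8\le d_u\le 9$, the constraints are $8\le q_u-1+\sum_{w} q_w\le 9$, not $7\le\cdots\le 8$. Had you pursued the integer-system route, this error would have corrupted the enumeration.
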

	\begin{proof}
For the sake of contradiction, assume that $G$ is a blowup of $F_3$ such that all the constituent-cliques are non-empty. Let $X,Y,Z,$ and $Q_i, i\in [6]$ be the constituent-cliques corresponding to vertices $x,y,z,$ and $v_i, i\in [6]$ of $F_3$ in $G$, respectively. By our assumption, $|X|\geq 1, |Y|\geq 1, |Z|\geq 1,$ and $|Q_i|\geq 1$ for each $i\in [6]$. Note that $N[z] = N[v_1]\cup X\cup Y$. Clearly, $|d(z)-d(v_1)|\geq 2$, a contradiction. So one of the constituent-cliques in $G$ is empty.
\end{proof}

\begin{lemma}\label{lem-F1}
If $G$ is a blowup of $F_1$, then one of the constituent-cliques in $G$ is empty.
\end{lemma}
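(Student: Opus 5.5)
We will argue exactly as in the proof of Lemma~\ref{lem-F3}: suppose, for the sake of contradiction, that $G$ is a blowup of $F_1$ in which every constituent-clique is non-empty. We then look for two vertices $p,q$ of $F_1$ with $N_{F_1}[q]\subseteq N_{F_1}[p]$ and with at least two vertices of $N_{F_1}[p]\setminus N_{F_1}[q]$. In a blowup, for any $x$ in the constituent-clique of $p$ and $y$ in the constituent-clique of $q$, we have
\[
N[x]=\bigcup_{u\in N_{F_1}[p]} Q_u \quad\text{and}\quad N[y]=\bigcup_{u\in N_{F_1}[q]} Q_u ,
\]
where $Q_u$ denotes the constituent-clique of $u$. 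Hence $N[y]\subseteq N[x]$ and $N[x]\setminus N[y]$ contains at least two vertices, one from each of two non-empty constituent-cliques. This gives $d(x)-d(y)\geq 2$, which contradicts the property $|d(u)-d(v)|\leq 1$ that $G$ inherits from Lemma~\ref{lem-cntrbkc}.

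The steps are as follows. First, label $F_1$ as in Figure~\ref{fig1}. It consists of an induced $C_6$ $v_1\cdots v_6$ together with extra vertices attached to it, presumably one or two hub-type vertices. Second, since the blowup has no universal vertex (Lemma~\ref{lem-nouniversal}) and no clique-cutset (Lemma~\ref{lem-mindegnocliquectset}), we only need to use the degree condition. Third, we locate the pair $(p,q)$. The natural candidate mirrors Lemma~\ref{lem-F3}: an extra vertex $z$ adjacent to a vertex $v_i$ of the hexagon and to its neighbourhood, so that $N_{F_1}[v_i]\subseteq N_{F_1}[z]$. If instead the domination goes the other way, we take $p=v_i$ and $q=z$.

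The main obstacle is the third step. It requires reading off the exact adjacencies of $F_1$ from the figure and checking that some pair really has closed neighbourhoods nested with a difference of two or more vertices of $F_1$. If the best nested pair differs by only one vertex of $F_1$, we fall back on a weaker strategy: compare degrees along a chain of pairs. For example, we can show $d(x)\ge d(y)+|Q_w|$ and $d(y)\ge d(x')+|Q_{w'}|$ for suitable pairs, and then sum the inequalities to force a difference of at least $2$. Alternatively, we compare two nested pairs whose differences are disjoint non-empty constituent-cliques, so that the two differences combine to at least $2$.

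Should even that fail, we can use vertex-criticality directly. If $N[y]\subseteq N[x]$ with $x,y$ non-adjacent, this contradicts criticality, because a colouring of $G-y$ extends to $y$ by giving $y$ the colour of $x$. If $x,y$ are adjacent, we instead look for an induced $d_1$-choosable subgraph from Lemma~\ref{lem-nod1choosable}, such as $K_2\vee C_5$ formed by two adjacent hub-type vertices together with an induced $C_5$ of the blowup. This would be a contradiction, since Lemma~\ref{lem-nod1choosable} forbids such induced subgraphs in $G$.
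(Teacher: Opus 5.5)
There is a genuine gap: none of your strategies yields a contradiction for $F_1$. In $F_1$ (the $C_5$ $v_1\cdots v_5$, with $y$ adjacent to $v_2,v_3,x$, with $z$ adjacent to $v_4,v_5,x$, and with $x$ adjacent to $v_3,v_4$; this is the adjacency the paper's proof uses), only two pairs have nested closed neighbourhoods: $N[y]\subset N[v_3]$ and $N[z]\subset N[v_4]$. Each differs by a single vertex, $v_4$ and $v_3$ respectively. So degree comparison only gives $|Q_3|=|Q_4|=1$, $d(v_3)=d(v_4)=9$ and $d(y)=d(z)=8$. This is where the paper's proof starts, not where it ends.

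Your fallbacks do not close the gap:
\begin{itemize}
\item There is no chain $N[\cdot]\subset N[\cdot]\subset N[\cdot]$ to sum along.
\item Two nested pairs with disjoint differences say nothing about the degree difference of any single pair.
\item The criticality fallback is inconsistent as written, since $N[y]\subseteq N[x]$ forces $xy\in E(G)$. With open neighbourhoods it still fails: no non-adjacent pair in an $F_1$-blowup has $N(y)\subseteq N(x)$.
\item There is no $K_2\vee C_5$, because no vertex of $F_1$ is complete to an induced $C_5$.
\end{itemize}

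No argument of the type you propose can work. Take $|Q_1|=|Q_3|=|Q_4|=|X|=1$, $|Q_2|=|Q_5|=4$ and $|Y|=|Z|=3$. Then every vertex has degree $8$ or $9$, $\Delta=9$ and $\omega=8$. There is no universal vertex and no clique-cutset, and none of the graphs listed in Lemma~\ref{lem-nod1choosable} occurs as an induced subgraph. What rules this graph out is only that it is $8$-colourable.

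The missing idea is to use $\chi(G)=9$ directly, which is what the paper does:
\begin{itemize}
\item It bounds $|X|\le 2$, since otherwise $K_3\vee P_4$ appears.
\item When $|Q_1|\ge 2$ it bounds $|Q_2|,|Q_5|\le 3$, since otherwise $K_4\vee 2K_2$ appears.
\item Degree equations then force $|Q_1|\in\{1,3\}$, and after that $|X|=1$, $|Q_2|+|Y|=|Q_5|+|Z|=7$ and $|Q_1|=1$.
\item Finally it writes down an explicit $8$-colouring of the surviving graphs, contradicting $\chi(G)=9$.
\end{itemize}
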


\begin{proof}
For the sake of contradiction, assume that $G$ is a blowup of $F_1$ such that all the constituent cliques are non-empty. Let $X,Y,Z,$ and $Q_i, i\in [5]$ be the constituent-cliques corresponding to vertices $x,y,z,$ and $v_i, i\in [5]$ of $F_1$ in $G$, respectively. By our assumption, $|X|\geq 1, |Y|\geq 1, |Z|\geq 1,$ and $|Q_i|\geq 1$ for each $i\in [5]$. Let $x\in X$, $y\in Y$, $z\in Z$, and $v_i\in Q_i, i \in\{3, 4\}$. Clearly, $N[v_3] = N[y]\cup Q_4$. Then, since $|d(v_3) - d(y)|\leq 1$, we have $|Q_4| = 1$. This also implies that $d(y) < d(v_3)$, and hence $d(v_3) = 9$ and $d(y) = 8$ for every $y\in Y$ and $v_3\in Q_3$. Similarly, we have $N[v_4] = N[z]\cup Q_3$. Then, since $|d(v_4) - d(z)|\leq 1$, we have $|Q_3| = 1$. Moreover, this implies that $d(z) < d(v_4)$, and hence $d(v_4) = 9$ and $d(z) = 8$ for every $z\in Z$ and $v_4\in Q_4$. Also, if $|X|\geq 3$, then $G[X\cup Y\cup Z\cup Q_3\cup Q_4]$ contains an induced $K_3\vee P_4$ in $G$, which is a contradiction to Lemma~\ref{lem-nod1choosable}. So we have $|X|\leq 2$. Also, if $|Q_1|\geq 2$, then $|Q_5|\leq 3$ and $|Q_2|\leq 3$. Otherwise, $G[Q_1\cup Q_5\cup Z\cup Q_4]$ or $G[Q_1\cup Q_2\cup Y\cup Q_3]$ contains an induced $K_4\vee 2K_2$, a contradiction to Lemma~\ref{lem-nod1choosable}. 

\begin{claim}\label{claim-F1claim1} $|Q_1| = 1$ or $|Q_1| = 3$. 
	\end{claim}

\begin{claimproof} For $v_3\in Q_3$, we have $d(v_3) = 9 = |X| + |Y| + |Q_2| + |Q_3| + |Q_4| - 1$. Then, since $|X|\leq 2$ and $|Q_3| = |Q_4| = 1$, we have $|Q_2| + |Y|\geq 6$. We have $d(v_2) = |Q_1| + |Q_2| + |Q_3| + |Y| - 1$. Then, since $d(v_2)\leq 9$ for $v_2\in Q_2$ and $|Q_2| + |Y|\geq 6$, we have $|Q_1|\leq 3$. For the sake of contradiction, assume that $|Q_1| = 2$. Then $|Q_5|\leq 3$ and $|Q_2|\leq 3$. Then for any $v_1\in Q_1$, we have $d(v_1) = |Q_1| + |Q_2| + |Q_5| - 1\leq 2 + 3 + 3 -1 = 7$, a contradiction. 
\end{claimproof}

\begin{claim}\label{claim-F1claim2}
$|X| = 1$.
	\end{claim}

	\begin{claimproof}
We have $1\leq |X|\leq 2$. For the sake of contradiction, let $|X| = 2$. Then, since for any $v_3\in Q_3$, $d(v_3) = 9$ and $|Q_3| = |Q_4| = 1$, we have $|Q_2| + |Y| = 6$. Again, since $d(v_4) = 9$ for $v_4\in Q_4$ and $|Q_3| = |Q_4| = 1$, we have $|Q_5| + |Z| =  6$. For any $v_5\in Q_5$, we have $d(v_5) = |Q_1| + |Q_4| + |Q_5| + |Z| - 1 = |Q_1| + 1 + 6 - 1$. Then, since $8\leq d(v_5)\leq 9$ and using Claim~\ref{claim-F1claim1}, we must have $|Q_1| = 3$. Since $|Q_1| > 2$, we have $|Q_2|\leq 3$ and $|Q_5|\leq 3$. For any $v_1\in Q_1$, we have $d(v_1) = |Q_1| + |Q_2| + |Q_5| - 1\leq 3 + 3 + 3 - 1 = 8$. Since $d(v_1)\geq 8$, we have $|Q_1| = |Q_2| = |Q_5| = 3$. Then $|Y| = |Z| = 3$. Then $G[Q_4\cup Q_5\cup X\cup Z]$ contains an induced $K_4\vee 2K_2$, a contradiction to Lemma~\ref{lem-nod1choosable}. So $|X| = 1$.\end{claimproof}

Using Claim~\ref{claim-F1claim2}, we have $|X| = 1$. Then, since $d(v_3) = 9$ and $|Q_3| = |Q_4| = 1$, we have $|Q_2| + |Y| = 7$. Again, since $d(v_4) = 9$ and $|Q_3| = |Q_4| = 1$, we have $|Q_5| + |Z| =  7$. Now we have $d(v_5) = |Q_1| + |Q_4| + |Q_5| + |Z| - 1 = |Q_1| + 1 + 7 - 1$. Then, since $8\leq d(v_5)\leq 9$ and using Claim~\ref{claim-F1claim1}, we must have  $|Q_1| = 1$. Now color $G$ as follows: Assign Color $1$ to the vertex of $Q_4$, Color $2$ to the vertex of $Q_3$, Color $3$ to the vertex of $X$, and Color $4$ to the vertex of $Q_1$. Now assign Color $3$ to one vertex of $Q_5$, Color $4$ to one vertex of $Z$, and properly color the remaining vertices of $Q_5\cup Z$ from the set $\{2, 5, 6, 7, 8\}$ of colors. Assign Color $3$ to one vertex of $Q_2$, Color $4$ to one vertex of $Y$, and properly color the remaining vertices of $Q_2\cup Y$ from the set $\{1, 5, 6, 7, 8\}$ of colors. This produces an $8$-coloring of $G$, which is a contradiction to the fact that $\chi(G) = 9$. This completes the proof of Lemma~\ref{lem-F1}.
	\end{proof}

To proceed further, we need the following structural result obtained by Karthick and Maffray in the proof of Theorem~3.5 in~\cite{kartikandmaffray}.

\begin{lemma}[\cite{kartikandmaffray}]\label{lem-prprtsofband}
	Let $G$ be a $(P_6, C_4, F_3)$-free graph that contains an induced $F_1$ and does not contain any universal vertex and any clique-cutset. Let $V(G)$ be partitioned as $\mathcal{C}\cup A\cup B\cup D\cup R$ as defined in~(\ref{partition}) where $\mathcal{C} = \{v_1,v_2,v_3,v_4,v_5\}\subseteq V(F_1)$ which induces a $C_5$ in $G$. Then the following hold:
	\begin{enumerate}
		\item  $A\cup B_1\cup B_5\cup R = \emptyset$. 
	\vspace{0.1cm}	
		\item  $[D_1, D_2\cup D_5]$ is complete and $[D_1,  B_2\cup B_4] = \emptyset$.
		\vspace{0.1cm}	
		\item $[D_2, B_2]$ and $[D_5, B_4]$ are complete. 
		\vspace{0.1cm}	
		\item  $[D_3, B_2\cup B_3]$ and $[D_4, B_3\cup B_4]$ are complete. Moreover, every vertex of $D_3$ is either complete to $D_2$ or to $D_4$. Similarly, every vertex of $D_4$ is either complete to $D_4$ or to $D_5$.
		\vspace{0.1cm}	
		\item $[B_3, B_2\cup B_4]$ is complete and $B_2$, $B_3$ and $B_4$ are cliques.
		\vspace{0.1cm}	
		\item  Every vertex in $B_3$ is anticomplete to $D_2$ or $D_5$.
	\end{enumerate}
\end{lemma}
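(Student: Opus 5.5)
The plan is to work directly with the partition~(\ref{partition}) around the $C_5$ taken inside the induced $F_1$, using $(\mathbb{O}\ref{O1})$--$(\mathbb{O}\ref{O7})$. We may not use $(\mathbb{O}\ref{O2b})$ and $(\mathbb{O}\ref{O4b})$, since $G$ may contain a $C_6$. Reading off from the proof of Lemma~\ref{lem-F1} how $F_1$ sits on $\mathcal{C}$, I would label its three extra vertices as $y\in B_2$, $x\in B_3$, $z\in B_4$, with $xy,xz\in E(G)$ and $yz\notin E(G)$. So $B_2,B_3,B_4\neq\emptyset$ from the start. The remaining tools are three: every conclusion is forced by exhibiting an induced $P_6$, $C_4$ or $F_3$; the absence of a universal vertex is used to kill $R$; and the absence of a clique-cutset is used to force enough adjacency.

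\textbf{Step 1 (part (a)).} $B_1=B_5=\emptyset$ comes from $(\mathbb{O}\ref{O4})$ together with the fixed vertices $y,x,z$. For instance, $b_1\in B_1$ would have to be complete to $B_2$, but $\{b_1,y,x,\dots\}$ together with $\mathcal{C}$ then yields a $C_4$ or an induced $F_3$. The set $A$ is handled by the second half of $(\mathbb{O}\ref{O4})$: $B_2\neq\emptyset$ kills $A_1\cup A_4$, $B_3\neq\emptyset$ kills $A_2\cup A_5$, and $B_4\neq\emptyset$ kills $A_3\cup A_1$. For $R$, I would use $(\mathbb{O}\ref{O7})$ to get $R$ complete to $\mathcal{C}\cup D\cup R$. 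Then I would check against $x,y,z$ that any $r\in R$ is also complete to $B$, since a non-neighbour in $B_i$ gives a $C_4$ through $v_i$, $v_{i+2}$ or a similar short path. Once $A=\emptyset$, this makes $r$ universal, a contradiction.

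\textbf{Steps 2--4 (parts (b)--(d)).} I would derive each adjacency from a specific forbidden configuration. For $[D_1,B_2\cup B_4]=\emptyset$: $d_1\in D_1$ adjacent to $b_2\in B_2$, together with $v_5$ and $v_3$, gives a $C_4$ or $P_6$ through $x$ and $z$. For $[D_1,D_2\cup D_5]$ complete: a non-edge $d_1d_2$ gives the path $d_1 v_5 v_4 z x y$-type $P_6$, or a $C_6$ that extends by $F_1$ vertices to an $F_3$. Parts (c) and (d) follow in the same style, for example $b_2\in B_2$ non-adjacent to $d_2\in D_2$ gives $d_2 v_1 v_5 z x b_2$. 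The clause ``every vertex of $D_3$ is complete to $D_2$ or $D_4$'' is where no-clique-cutset enters. If $d_3$ missed $d_2\in D_2$ and $d_4\in D_4$, I would try first to produce a $P_6$ $d_2v_2\dots$ via $y$ and $z$. Failing that, I would show $N(d_3)$ separates in a way contradicting the hypothesis.

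\textbf{Steps 5--6 (parts (e), (f)).} $[B_3,B_2\cup B_4]$ complete is the first half of $(\mathbb{O}\ref{O4})$. Cliqueness of $B_i$ comes from $C_4$'s: two non-adjacent $b_3,b_3'\in B_3$ together with $y$ (adjacent to both) and $v_3$ form a $C_4$. For $B_2$, two non-adjacent vertices together with $x$ and $v_2$ give a $C_4$. Part (f) is $(\mathbb{O}\ref{O6})$-like: $b_3$ adjacent to $d_2\in D_2$ and $d_5\in D_5$ gives $d_2 b_3 d_5 v_1$, a $C_4$, since $D_2$ is anticomplete to $D_5$ by $(\mathbb{O}\ref{O1})$.

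\textbf{Main obstacle.} I expect the hardest parts to be the $F_3$-freeness arguments in (a)/(b) and the ``complete to $D_2$ or to $D_4$'' clause in (d). Both need the exact shape of $F_3$ and a careful case split on which $F_1$-vertices the offending vertex sees. For those I would first try to reduce to a $C_6$ through the fixed $x,y,z$, and then argue that this $C_6$ plus one further vertex induces $F_3$.
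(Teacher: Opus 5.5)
The paper does not prove this lemma; it imports it from Karthick and Maffray. Most of your plan can be carried out, but the step $R=\emptyset$ in (a) fails as written, and it is exactly the step that needs the $F_3$ hypothesis.

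\textbf{The gap: the argument for $R=\emptyset$.} You claim that if $r\in R$ misses some $b\in B_i$, a $C_4$ through $v_i,v_{i+2}$ appears. But $N(b)\cap\mathcal{C}=\{v_i,v_{i+1}\}$, so on $\mathcal{C}$ the vertices $r$ and $b$ have only the two \emph{adjacent} common neighbours $v_i,v_{i+1}$. No such $C_4$ exists.

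In fact no argument that avoids $F_3$-freeness can work here. Suppose $r\in R$ is anticomplete to $\{x,y,z\}$. Then $F_1+r$ is the hexagon $y\,v_2\,v_1\,v_5\,z\,x$ together with the triangle $\{v_3,r,v_4\}$, whose vertices are attached to the consecutive triples centred at $y$, $v_1$ and $z$. This is exactly $F_3$; note $N[r]=N[v_1]\cup\{v_3,v_4\}$, the relation used in Lemma~\ref{lem-F3}. This graph is $C_4$-free, has no universal vertex and no clique-cutset, yet $r$ misses $y\in B_2$. Your plan never invokes $F_3$ at this point.

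The repair is as follows.
\begin{itemize}
\item The $C_4$'s $\{r,y,x,v_4\}$, $\{r,z,x,v_3\}$, $\{r,x,y,v_2\}$ and $\{r,x,z,v_5\}$ show that $r$ is complete or anticomplete to $\{x,y,z\}$.
\item The anticomplete case is $F_3$, which is excluded.
\item In the complete case, $\{b,x,r,v_2\}$ for $b\in B_2$, $\{b,y,r,v_4\}$ for $b\in B_3$, and $\{b,x,r,v_5\}$ for $b\in B_4$ are $C_4$'s unless $rb\in E(G)$.
\item Since $A\cup B_1\cup B_5=\emptyset$, $(\mathbb{O}\ref{O7})$ then makes $r$ universal, a contradiction.
\end{itemize}

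\textbf{Where you do invoke $F_3$, it is not what occurs.}
\begin{itemize}
\item For $b_1\in B_1$, the graph $F_1+b_1$ contains no $C_4$ and is not $F_3$ (it has four vertices of degree $3$). The real obstruction is the induced $P_6$ $z\,v_5\,v_1\,b_1\,y\,v_3$, using $b_1y\in E(G)$ by $(\mathbb{O}\ref{O4})$ and $b_1z\notin E(G)$ by $(\mathbb{O}\ref{O5})$. Symmetrically, $y\,v_2\,v_1\,b_5\,z\,v_4$ handles $B_5$.
\item Part (b) needs no $F_3$ either. If $d_1b_2\in E(G)$, you get the $C_4$ $\{d_1,b_2,x,z\}$ when $d_1z\in E(G)$, and the $P_6$ $z\,v_4\,v_3\,b_2\,d_1\,v_1$ otherwise.
\item A non-edge $d_1d_2$ gives the $P_6$ $d_1\,v_1\,d_2\,v_3\,v_4\,z$. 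Your path $d_1v_5v_4zxy$ is not induced, since $v_4x\in E(G)$.
\end{itemize}

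\textbf{Smaller slips.}
\begin{itemize}
\item In (e), $\{b_3,y,b_3',v_3\}$ is not a $C_4$ because $yv_3\in E(G)$; use $v_4$ instead.
\item In (c), $d_2v_1v_5zxb_2$ is induced only when $d_2x\notin E(G)$. Otherwise $\{d_2,x,b_2,v_2\}$ is a $C_4$.
\item The $D_3$ clause of (d) does not need the clique-cutset hypothesis. First show $d_3$ is adjacent to $x$: otherwise you get the $P_6$ $d_3v_2v_1v_5zx$ or the $C_4$ $\{d_3,z,x,v_3\}$. Then $\{d_3,x,y,v_2\}$ forces $d_3y\in E(G)$. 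Now if $d_3$ misses $d_2\in D_2$ and $d_4\in D_4$, you get the $P_6$ $z\,v_5\,v_1\,d_2\,y\,d_3$ when $d_3z\notin E(G)$. When $d_3z\in E(G)$, you get the $C_4$ $\{d_3,z,d_4,v_3\}$.
\end{itemize}
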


\begin{figure}[t]
	\centering
	
	\includegraphics[width= 0.4\textwidth]{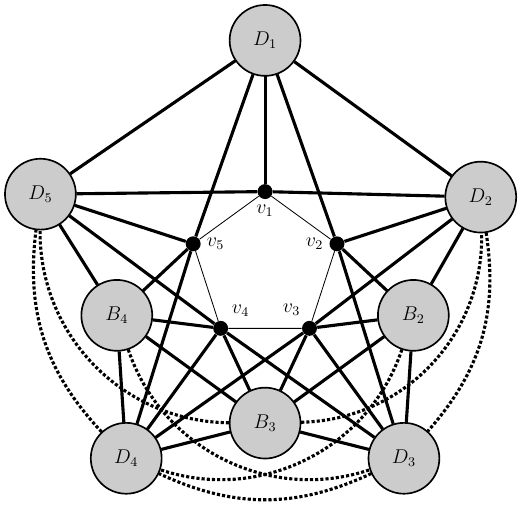}
	
	\caption {Partial structure of a $(P_6, C_4, F_3)$-free graph that contains an induced $F_1$. Here the gray circular regions represent cliques. A bold line between two shapes represents that the corresponding vertex sets are complete to each other and a dotted line represents that arbitrary edges may exist between the vertices of the sets.}
	\label{fig2}
\end{figure}

\begin{lemma}\label{lem-band}
If $G$ is $F_3$-free, then $G$ is $F_1$-free.
\end{lemma}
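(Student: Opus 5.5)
We argue by contradiction. Suppose $G$ is $F_3$-free but contains an induced $F_1$. Since $G$ has no clique-cutset and no universal vertex, Lemma~\ref{lem-prprtsofband} applies with $\mathcal{C}=\{v_1,\dots,v_5\}\subseteq V(F_1)$. Hence $V(G)=\mathcal{C}\cup B_2\cup B_3\cup B_4\cup D$, and $G$ has the structure of Figure~\ref{fig2}. The plan is to show that the degree condition $8\le d(u)\le 9$ (with $|d(u)-d(v)|\le 1$ for all $u,v$), together with the absence of the $d_1$-choosable graphs of Lemma~\ref{lem-nod1choosable} and $\omega(G)\le 8$, forces $G$ to be a blowup of $F_1$ with all constituent-cliques nonempty. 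That configuration is excluded by Lemma~\ref{lem-F1}. Alternatively, we reach a contradiction directly through a degree comparison or an explicit $8$-coloring.

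First, we use degree comparisons as in Section~\ref{sec-3}.
\begin{itemize}
\item Since $[D_1,B_2\cup B_4]=\emptyset$ and $A=R=\emptyset$, we get $N[d_1]\subseteq\{v_5,v_1,v_2\}\cup D_5\cup D_1\cup D_2$. By Lemma~\ref{lem-prprtsofband}(b), $[D_1,D_2\cup D_5]$ is complete, so $N[d_1]=N[v_1]$ for every $d_1\in D_1$. Thus $D_1\cup\{v_1\}$ is a set of true twins.
\item For $b\in B_3$, Lemma~\ref{lem-prprtsofband}(d),(e) gives $N[b]\supseteq\{v_3,v_4\}\cup B_2\cup B_3\cup B_4\cup D_3\cup D_4$, plus possibly some of $D_2$ or $D_5$ (but not both, by (f)).
\item We compare this with $N[v_3]=\{v_2,v_3,v_4\}\cup B_2\cup B_3\cup D_2\cup D_3\cup D_4$ and with $N[v_4]$. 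This bounds $|B_4|$ (respectively $|B_2|$) against the neighbours of $b$ in $D_2$ (respectively $D_5$).
\item In the same way, comparing $d_3\in D_3$ with $v_3$ shows that $d_3$ misses at most one vertex of $D_2\cup D_4$ beyond what is already forced.
\end{itemize}
Using the $F_1$ itself, in which $x,y,z$ should correspond to vertices of $B_3$, $B_2$ and $B_4$ (or of $D$), we then show that $B_2,B_3,B_4$ are nonempty. We also show that each of $D_3$, $D_4$ is homogeneous (complete or anticomplete) with respect to $D_2$ and $D_5$. This makes $G$ a blowup of $F_1$ whose constituent-cliques are $D_1\cup\{v_1\}$, $D_2\cup\{v_2\}$, and so on.

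Second, wherever homogeneity might fail, we exhibit forbidden induced subgraphs.
\begin{itemize}
\item A vertex of $D_3$ complete to $D_2$ but not to $D_4$, together with $B_2\cup B_3$, $v_2,v_3$ and suitable vertices, should yield a $K_3\vee P_4$ or a $K_4\vee S$ with $S$ a spanning subgraph of $C_4$, once the relevant cliques have size at least $2$ or $3$.
\item If those cliques are small, the degree sums at $v_2,v_3,v_4,v_5$ (each between $8$ and $9$) become infeasible, exactly as in the counting arguments of Claims~\ref{claim-10} and~\ref{claim-1110}.
\item We also need the edges from $B_3$ to $D_2$ or $D_5$ to be all-or-nothing. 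For this we use a $C_4$/$P_6$ argument: for $b,b'\in B_3$ and $d_2\in D_2$ with $bd_2\in E(G)$ and $b'd_2\notin E(G)$, we look for an induced $P_6$ through $v_5,v_1$.
\end{itemize}

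The main obstacle is this last reduction: forcing the partial structure of Figure~\ref{fig2}, with its dotted (arbitrary) edges, into an honest blowup of $F_1$. If full homogeneity cannot be proved, the fallback is as follows. We use the twin structure and the degree equalities to pin down all part sizes, in the same way as the case $|X|=1$, $|Q_1|=1$ in the proof of Lemma~\ref{lem-F1}. We then write down an explicit $8$-coloring, assigning colors first to the small parts $B_2,B_3,B_4$ and then completing the large cliques $D_2\cup\{v_2\}$ and $D_5\cup\{v_5\}$ greedily. This contradicts $\chi(G)=9$.
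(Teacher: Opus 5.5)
Your proposal has a genuine gap. The main route, forcing $G$ into a blowup of $F_1$ with constituent-cliques $\{v_i\}\cup D_i$, $B_2$, $B_3$, $B_4$, cannot succeed, and the fallback is unspecified exactly where the real work lies.

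First, vertices of $D_3$ and $D_4$ are never twins of $v_3$ and $v_4$. Comparing $N[b_2]$ with $N[v_3]$, and $N[b_4]$ with $N[v_4]$, forces $[B_2,D_4]$ and $[B_4,D_3]$ to be complete, whereas $v_3$ has no neighbour in $B_4$. For the same reason your comparison of $d_3$ with $v_3$ is off: $d_3$ gains all of $B_4$, so it must miss roughly $|B_4|$ vertices of $D_2\cup D_4$, not at most one. No homogeneity statement puts $D_3$ into the $v_3$-clique; you must prove $D_3\cup D_4=\emptyset$. The paper does this in three steps:
\begin{itemize}
\item If both are nonempty, $\{b_3,d_3,d_4,b_4,v_4,v_3,b_2\}$ induces a $K_3\vee P_4$.
\item If $D_3\neq\emptyset$, then $K_3\vee P_4$ and $K_2\vee\textit{kite}$ force $|D_3|=|B_2|=|B_3|=1$.
\item A degree count finishes: $d(v_3)=9$ gives $|D_2|=4$, hence $D_1=\emptyset$ (via $K_4\vee 2K_2$), and then $d(v_4)=9$ forces $d(v_5)=7$.
\end{itemize}

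Second, and more seriously, a vertex $b_3\in B_3$ with any neighbour in $D_2\cup D_5$ already destroys the blowup structure, since $v_2$ and $v_5$ are nonadjacent to $B_3$. Your $C_4/P_6$ argument about two vertices $b,b'\in B_3$ targets inhomogeneity inside $B_3$, which is not the issue. Using $K_3\vee P_4$ on $B_2\cup B_3\cup B_4\cup\{v_3,v_4\}$, $K_2\vee\textit{kite}$, and degree counts, the paper shows that $|B_3|\le 2$ and that each $b_3$ has at most one neighbour in $D_2\cup D_5$. When $|B_3|=2$, the degrees force $[B_3,D_2\cup D_5]=\emptyset$, so Lemma~\ref{lem-F1} applies.

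However, one configuration survives every degree constraint: $B_3=\{b_3\}$ with exactly one neighbour $d_5\in D_5$, $D_1=D_3=D_4=\emptyset$, and $|D_2\cup B_2|=|D_5\cup B_4|=6$. It is not a blowup of $F_1$ under any homogeneity, so the only way to exclude it is an explicit $8$-coloring. The paper colours $v_1,d_5,\{v_2,v_5,b_3\},v_3,v_4$ with $1,2,3,2,4$. It then reuses colour $1$ on one vertex of each of $B_2$ and $B_4$, and fills in the rest from $\{4,\dots,8\}$ and $\{5,\dots,8\}$. Your fallback gestures at this but supplies neither the structural facts needed to reach this configuration nor the coloring. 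As written, the argument is incomplete at its central step.
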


 \begin{proof} 
For the sake of contradiction, assume that $G$ is $F_3$-free and contains an induced $F_1$. Recall that $G$ does not contain any universal vertex and any clique-cutset. Now we use Lemma~\ref{lem-prprtsofband} and partition~(\ref{partition}) to define the sets $A_i,B_i,D_i, i\in [5]$, and $R$. Also, $G$ satisfies $(\mathbb{O}\ref{O1})-(\mathbb{O}\ref{O7})$ and all the properties mentioned in Lemma~\ref{lem-prprtsofband}. We refer to Figure~\ref{fig2} for a partial representation of $G$. Since $G$ contains an $F_1$, the sets $B_2$, $B_3$, and $B_4$ in $G$ are non-empty. 
 	 	 		
 \begin{claim}\label{claim-13}
 $[B_2, D_4]$ and $[B_4, D_3]$ are complete.
 	   	   \end{claim}
\begin{claimproof}
 Let $b_2\in B_2$. By ($\mathbb{O}$\ref{O5}) and Lemma~\ref{lem-prprtsofband}, we have $N[b_2]\subseteq \{v_2, v_3\} \cup B_2\cup B_3\cup D_2\cup D_3\cup D_4$. Clearly, $N[b_2]\cup\{v_4\}\subseteq N[v_3]$. Then, since $|d(v_3) - d(b_2)|\leq 1$, we must have $N(b_2) = \{v_2, v_3\} \cup (B_2\setminus \{b_2\})\cup B_3\cup D_2\cup D_3\cup D_4$. This implies that every vertex of $B_2$ is complete to the set $D_4$. So $[B_2, D_4]$ is complete. Similarly, for any $b_4\in B_4$, we compare $d(b_4)$ and $d(v_4)$ to conclude that $b_4$ is complete to $D_3$. So $[B_4, D_3]$ is also complete.
 \end{claimproof}
 	 		
\begin{claim}\label{claim-14}
  $D_3 = \emptyset$ or $D_4 = \emptyset$. If $|D_1|\geq 1$, then $|D_2|\leq 2$ and $|D_5|\leq 2$.
 	\end{claim}
 	 		
\begin{claimproof}
For the sake of contradiction, let $d_3\in D_3$ and $d_4\in D_4$. By Claim~\ref{claim-13}, we have $[B_2, D_4]$ and $[B_4, D_3]$ are complete. Let $b_3\in B_3$ and $b_4\in B_4$. If $d_3d_4\notin E(G)$, then $\{d_3, v_3, d_4, b_4\}$ induces a $C_4$ in $G$; so $d_3d_4\in E(G)$. Then $\{b_3, d_3, d_4, b_4, v_4, v_3, b_2\}$ induces a $K_3\vee P_4$ in $G$, a contradiction to Lemma~\ref{lem-nod1choosable}. So $D_3 = \emptyset$ or $D_4 = \emptyset$. Now let $|D_1|\geq 1$. If $|D_2|\geq 3$, then $G[D_1\cup D_2\cup B_2\cup \{v_1, v_2, v_3\}]$ contains an induced $K_4\vee 2K_2$, which is a contradiction to Lemma~\ref{lem-nod1choosable}. So $|D_2|\leq 2$. Similarly, If $|D_1|\geq 1$, then $|D_5|\leq 2$.
 \end{claimproof}
 	 		
\begin{claim}\label{claim-15}
 If $D_3\neq \emptyset$, then $|D_3| = |B_3| = |B_2| = 1$. If $D_4\neq \emptyset$, then $|D_4| = |B_3| = |B_4| = 1$.
 	 		\end{claim} 
 	 		
\begin{claimproof} 
Let $D_3\neq \emptyset$. Then by Claim~\ref{claim-14}, $D_4 = \emptyset$. For the sake of contradiction, assume that $|D_3|\geq 2$. Then $G[D_3\cup \{v_3, v_2, b_2, b_3, v_4\}]$ contains an induced $K_3\vee P_4$, which is a contradiction to Lemma~\ref{lem-nod1choosable}. So $|D_3| = 1$. Let $D_3 = \{d_3\}$. If any of the sets $B_2$ and $B_3$ has more than one vertex, then $G[B_2\cup B_3\cup D_3\cup \{v_3, v_2, v_4\}]$ contains an induced $K_2\vee \textit{kite}$, a contradiction to Lemma~\ref{lem-nod1choosable}. So $|B_3| = |B_2| = 1$. Similarly, if $D_4\neq \emptyset$, then $|D_4| = |B_3| = |B_4| = 1$.
 \end{claimproof} 
 	 		
\begin{claim}\label{claim-16} 
$|B_3|\leq 2$ and every $b_3\in B_3$ has at most one neighbor in $D_2\cup D_5$.
 	 \end{claim} 
 	 
 \begin{claimproof}
For the sake of contradiction, assume that $|B_3|\geq 3$. Then $G[B_3\cup B_2\cup B_4\cup \{v_3, v_4\}]$ contains an induced $K_3\vee P_4$, a contradiction to Lemma~\ref{lem-nod1choosable}. So $|B_3|\leq 2$.  Let $b_3\in B_3$. By Lemma~\ref{lem-prprtsofband}, $b_3$ is anticomplete to $D_2$ or $D_5$. Let $b_3$ is anticomplete to $D_2$. If $|N(b_3)\cap D_5|\geq 2$, then $G[D_5\cup B_3\cup B_4\cup \{v_1, v_5, v_4\}]$ contains an induced $K_2\vee \textit{kite}$ in $G$, a contradiction to Lemma~\ref{lem-nod1choosable}. So $|N(b_3)\cap D_5|\leq 1$. Similarly, if $b_3$ is anticomplete to $D_5$, then $|N(b_3)\cap D_2|\leq 1$. So Claim~\ref{claim-16} holds.
  	 \end{claimproof}	
    
 	\begin{claim}\label{claim-16b} $D_3\cup D_4 = \emptyset$.
    \end{claim}
    
\begin{claimproof}
For the sake of contradiction, let $D_3\cup D_4\neq \emptyset$. By Claim~\ref{claim-14}, $D_3 = \emptyset$ or $D_4 = \emptyset$. Without loss of generality, we may assume that $D_3\neq \emptyset$ and $D_4 = \emptyset$. By Claim~\ref{claim-15}, $|D_3| = |B_3| = |B_2| = 1$. By ($\mathbb{O}$\ref{O5}) and Lemma~\ref{lem-prprtsofband}, we have $N[b_2]= \{v_2, v_3\}\cup D_2\cup B_2\cup B_3\cup D_3$. Clearly, $N[v_3] = N[b_2]\cup \{v_4\}$. Then $d(v_3) > d(b_2)$ implying that $d(v_3) = 9$. We have $d(v_3) = 9 = 2 + |D_2| + |B_2| + |B_3| + |D_3|= 2 + |D_2| + 1 + 1 + 1$. Then $|D_2| = 4$. Then by Claim~\ref{claim-14}, we have $D_1 = \emptyset$. Let $b_4\in B_4$. Using ($\mathbb{O}$\ref{O5}), Claim~\ref{claim-13} and Lemma~\ref{lem-prprtsofband}, we have $N[b_4] = \{v_4, v_5\}\cup D_3\cup B_3\cup B_4\cup D_5$. Clearly, $N[v_4] = N[b_4]\cup \{v_3\}$. This implies that $d(v_4) > d(b_4)$, and hence $d(v_4) = 9$. Then $d(v_4) = 2 + |D_3| + |B_3| + |B_4| + |D_5| = 2 + 1 + 1 + |B_4| + |D_5| = 9$. This implies that $|B_4| + |D_5| = 5$. Then $d(v_5) = 2 + |D_5| + |B_4| = 2 + 5 = 7$, a contradiction. So $D_3\cup D_4 = \emptyset$. 
\end{claimproof}

\begin{claim}\label{claim-17}
 $|B_3| = 1$.
\end{claim}

\begin{claimproof}
For the sake of contradiction, let $|B_3|\geq 2$. Then by Claim~\ref{claim-16}, we have $|B_3| = 2$. By Claim~\ref{claim-16b}, $D_3\cup D_4 = \emptyset$. By ($\mathbb{O}$\ref{O5}) and Lemma~\ref{lem-prprtsofband}, for any $b_2\in B_2$ we have $N[b_2]= \{v_2, v_3\}\cup D_2\cup B_2\cup B_3$. Clearly, $N[v_3] = N[b_2]\cup \{v_4\}$. Then $d(v_3) > d(b_2)$ implying that $d(v_3) =  2 + |D_2| + |B_2| + |B_3| = 9$. Then $|D_2\cup B_2| = 5$. Similarly, by comparing $d(v_4)$ with $d(b_4)$ for any $b_4\in B_4$, we have $d(v_4)> d(b_4)$ implying that $d(v_4) = 2 + |D_5| + |B_4| + |B_3| = 9$. Then $|D_5\cup B_4| = 5$. Since $8\leq d(v_5)\leq 9$, we have $1\leq |D_1|\leq 2$. Then by Claim~\ref{claim-14}, $|D_2|\leq 2$ and $|D_5|\leq 2$. Since $d(v_1)\geq 8$, we have $|D_1| = |D_2| = |D_5| = 2$. Then $|B_2| = |B_4| = 3$. By ($\mathbb{O}$\ref{O5}) and Lemma~\ref{lem-prprtsofband}, for any $b_3\in B_3$, we have $N(b_3) = \{v_3, v_4\} \cup B_2\cup (B_3\setminus \{b_3\})\cup B_4\cup (N(b_3)\cap D_2)\cup (N(b_3)\cap D_5)$. If $|N(b_3)\cap (D_2\cup D_5)|\geq 1$, then $d(b_3) = 2 + |B_2| + |B_3| + |B_4| + |N(b_3)\cap (D_2\cup D_5)|\geq 2 + 3 + 1 + 3 + 1 = 10$, a contradiction. So we must have $N(b_3)\cap (D_2\cup D_5) = \emptyset$ for every $b_3\in B_3$. Hence, $[B_3, D_2\cup D_5] = \emptyset$. Then $G$ becomes a blowup of $F_1$ with all the constituent cliques being non-empty, which is a contradiction to Lemma~\ref{lem-F1}. So $|B_3| = 1$.
\end{claimproof}

Using Claim~\ref{claim-17}, we may assume that $B_3 = \{b_3\}$. By Claim~\ref{claim-16b}, $D_3\cup D_4 = \emptyset$. If $b_3$ has no neighbors in $D_2\cup D_5$, then $G$ is a blowup of $F_1$ with all the constituent cliques being non-empty, a contradiction to Lemma~\ref{lem-F1}. So we must have $N(b_3)\cap (D_2\cup D_5)\neq \emptyset$ and by Claim~\ref{claim-16}, $b_3$ has exactly one neighbor in $D_2\cup D_5$. Due to symmetry, assume that $b_3$ has a neighbor $d_5\in D_5$. Let $b_2\in B_2$. By ($\mathbb{O}$\ref{O5}) and Lemma~\ref{lem-prprtsofband}, we have $N[b_2] = \{v_2, v_3\}\cup D_2\cup B_2\cup B_3$. Clearly, $N[v_3] = N[b_2]\cup \{v_4\}$. Then $d(v_3) > d(b_2)$, implying that $d(v_3) = 9$. Now we have $d(v_3) = 9 = 2 + |D_2| + |B_2| + |B_3| = 2 + |D_2| + |B_2| + 1$. Then $|D_2\cup B_2| = 6$. Similarly, by comparing $d(v_4)$ with $d(b_4)$ for any $b_4\in B_4$, we have $d(v_4) > d(b_4)$. Then $d(v_4) = 9 = 2 + |D_5| + |B_4| + |B_3| = 2 + |D_5| + |B_4| + 1$. So $|D_5\cup B_4| = 6$. We have $d(v_5) = 2 + |D_1| + |B_4| + |D_5|$. Since $d(v_5)\leq 9$, we have $|D_1|\leq 1$. If $|D_1| = 1$, then by Claim~\ref{claim-14}, $|D_2|\leq 2$ and $|D_5|\leq 2$. Then $d(v_1) = 2 + |D_1| + |D_2| + |D_5|\leq 2 + 1 + 2 + 2 = 7$, a contradiction. So we must have $D_1 = \emptyset$. Now color $G$ as follows: Assign Color $1$ to the vertex $v_1$, Color $2$ to the vertex $d_5$, Color $3$ to the vertices in the set $\{v_2, v_5, b_3\}$, and Color $2$ to the vertex $v_3$. Now assign Color $1$ to one vertex of $B_2$  and properly color the remaining $5$ vertices of $B_2\cup D_2$ from the set $\{4, 5, 6, 7, 8\}$ of colors. Next assign Color $4$ to the vertex $v_4$, Color $1$ to one vertex of $B_4$, and properly color the remaining $4$ vertices of $D_5\cup B_4$ from the set $\{5, 6, 7, 8\}$ of colors. This produces an $8$-coloring of $G$, which is a contradiction to the fact that $\chi(G) = 9$. This completes the proof of Lemma~\ref{lem-band}.
\end{proof}

\subsection{Analysis when $G$ is a blowup of $H_2$, $H_3$, or $H_4$}\label{subsec-GisblwupofH2,H3,orH4}

In this subsection, we analyze the structure of $G$, when it is a blowup of one of the graphs $H_2$, $H_3$, or $H_4$.

\begin{lemma}\label{lem-H2}
If $G$ is a blowup of $H_2$, then one of the constituent-cliques in $G$ corresponding to the vertices $v_i,i\in [6]$ of $H_2$ is empty.
\end{lemma}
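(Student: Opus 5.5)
We argue by contradiction, in the same spirit as Lemmas~\ref{lem-F3} and~\ref{lem-F1}. Suppose $G$ is a blowup of $H_2$ in which the constituent-cliques $Q_1,\dots,Q_6$ corresponding to $v_1,\dots,v_6$ are all non-empty. The other constituent-cliques of $H_2$ (the extra vertices in Figure~\ref{fig1}) may be empty or not. Since $G$ is a blowup, the closed neighbourhood of a vertex of a constituent-clique $Q$ is the union of $Q$ with all constituent-cliques adjacent to the corresponding vertex of $H_2$. Hence the degree condition $8\le d(u)\le 9$ and $|d(u)-d(v)|\le 1$ from Lemma~\ref{lem-cntrbkc} turns into linear (in)equalities in the clique sizes.

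\textbf{Step 1: compare nested neighbourhoods.} We look for pairs of vertices $p,q$ of $H_2$ with $N_{H_2}[p]\subsetneq N_{H_2}[q]$. In $H_2$ these should be the pairs in which an extra vertex attached to consecutive vertices of the $C_6$ is compared with a $C_6$ vertex. For such a pair, $N_G[q]=N_G[p]\cup(\text{the cliques of } N_{H_2}[q]\setminus N_{H_2}[p])$. The bound $|d(q)-d(p)|\le1$ then forces the difference to consist of a single non-empty clique of size exactly one. It also forces $d(q)=9$ and $d(p)=8$, exactly as in the proof of Lemma~\ref{lem-F1}. If the difference contains two non-empty cliques (say two $Q_i$'s), we get an immediate contradiction. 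I expect this alone to settle configurations where some $Q_i$ is forced to be absent.

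\textbf{Step 2: forbid large cliques.} We use Lemma~\ref{lem-nod1choosable} to bound the clique sizes. Whenever three consecutive cliques $Q_{i-1},Q_i,Q_{i+1}$ of the $C_6$ together with the adjacent extra clique(s) are large, $G$ contains an induced $K_3\vee P_4$, $K_4\vee 2K_2$ or $K_2\vee\textit{kite}$. This typically gives $|Q_i|\le 2$ or $3$. We also use $\omega(G)\le 8$ and $\Delta(G)=9$.

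\textbf{Step 3: solve the system.} We write $d(v_i)$ for each $i\in[6]$ as a sum of clique sizes minus one, and combine this with Steps~1--2. This should pin the sizes down to one or a few explicit possibilities, as in Claims~\ref{claim-F1claim1}--\ref{claim-F1claim2}.

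\textbf{Step 4: colour the survivors.} For each surviving configuration, we give an explicit $8$-colouring. We reuse colours on non-adjacent constituent-cliques: vertices of $Q_i$ and $Q_{i+3}$ can share colours, and singleton cliques forced in Step~1 can be given colours already used on opposite cliques. Greedy colouring then fills each remaining pair of adjacent large cliques from the leftover palette. This contradicts $\chi(G)=9$.

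The main obstacle is Step~3. Without the exact picture of $H_2$ in hand, the number of cases in the degree bookkeeping is unclear. If the inequalities do not close, I would first try to find further nested-neighbourhood pairs, or a forbidden $d_1$-choosable subgraph, to cut the cases down before resorting to explicit colourings.
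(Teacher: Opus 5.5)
Your Steps 1--4 match what the paper does to eliminate the extra cliques $X$ and $Y$ of $H_2$. For $x\in X$ the paper uses $N[x]=N[v_1]\cup Q_3\cup Y\cup Z$ to get $Y\cup Z=\emptyset$, $|Q_3|=1$ and $d(x)=9$. It then uses $N[x]=N[v_2]\cup Q_6$ to get $|Q_6|=1$. It excludes $|X|\ge 3$ by an induced $K_3\vee P_4$, and $|X|=2$ by an induced $K_2\vee\textit{kite}$, which would force $d(x)=5$. With $|X|=1$, $d(x)=9$ gives $|Q_1|+|Q_2|=7$, and $d(v_5)$ together with $\omega(G)\le 8$ gives $|Q_4|+|Q_5|=8$. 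An explicit $8$-colouring then yields the contradiction: one colour goes on a vertex of each of $Q_1,Q_3,Q_5$, another on a vertex of each of $Q_2,Q_4,Q_6$. The same argument gives $Y=\emptyset$. One small point in Step~1: an inclusion $N[p]\subsetneq N[q]$ only forces the difference to have at most one vertex. It has exactly one only because, in the comparisons actually used, the difference always contains some non-empty $Q_i$.

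The gap is what comes after $X=Y=\emptyset$. Every nested-neighbourhood comparison in this argument involves a vertex of $X$ or $Y$. Once those cliques are empty, Step~1 has nothing left to compare and Step~2 gives only crude bounds. The degree conditions then no longer reduce the clique sizes to a few explicit configurations to colour by hand, so Steps~3 and~4 do not close. At that point $G$ is a blowup of the Petersen graph (the $H_1$ situation) with all $Q_i$ non-empty. The paper finishes by invoking Lemma~\ref{lem-blowupptrsen}, the separately proved fact that blowups of the Petersen graph satisfy the Borodin--Kostochka conjecture. Your proposal never identifies this residual case and has no ingredient for it. You need either to cite that lemma or to give an independent colouring argument for such blowups, which is substantial work rather than bookkeeping.
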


\begin{proof}
Suppose, for the sake of contradiction, that $G$ is a blowup of $H_2$ such that all the constituent-cliques corresponding to $v_i$, $i\in [6]$ are non-empty (see Figure~\ref{fig1} for $H_2$). Let $X$, $Y$, $Z$, and $Q_i$, $i\in [6]$ be the constituent-cliques corresponding to vertices $x$, $y$, $z$, and $v_i$, $i\in 6$, respectively. By our assumption, we have $|Q_i|\geq 1$, for each $i\in [6]$. 
 
 Let $X\neq \emptyset$ and $x\in X$. For any $v_1\in Q_1$, we have $N[x] = N[v_1]\cup Q_3\cup Y\cup Z$. Then, since $|d(x) - d(v_1)|\leq 1$, we have $Y\cup Z = \emptyset$ and $|Q_3| = 1$. Clearly, $d(x) > d(v_1)$ for every $v_1\in Q_1$ and $x\in X$. This implies that $d(x) = 9$ for every $x\in X$. Now if $|X|\geq 3$, then $G[Q_1\cup Q_2\cup Q_3\cup Q_6\cup X]$ contains an induced $K_3\vee P_4$, which is a contradiction to Lemma~\ref{lem-nod1choosable}. So $|X|\leq 2$. Now for any $v_2\in Q_2$, we have $N[x] = N[v_2]\cup Q_6$. Then, since $|d(x) - d(v_2)|\leq 1$, we have $|Q_6| = 1$. Let $|X| = 2$. Then $|Q_1| = |Q_2| = 1$; otherwise, $G[Q_1\cup Q_2\cup Q_3\cup Q_6\cup X]$ contains an induced $K_2\vee \textit{kite}$, which is a contradiction to Lemma~\ref{lem-nod1choosable}. Then $d(x) = |Q_1| + |Q_2| + |Q_3| + |Q_6| + |X| - 1 = 1 + 1 + 1 + 1 + 2 - 1 = 5$, a contradiction. So we must have $|X| = 1$. Let $x\in X$. Now we have $d(x) = 9 = |Q_1| + |Q_2| + |Q_3| + |Q_6| = |Q_1| + |Q_2| + 1 + 1$. Then $|Q_1| + |Q_2| = 7$. For any $v_5\in Q_5$, we have $d(v_5) = |Q_4| + |Q_5| + |Q_6| - 1 = |Q_4| + |Q_5| + 1 - 1$. Since $8\leq d(v_5)\leq 9$, we have $8\leq |Q_4| + |Q_5|\leq 9$. Then, since $\omega(G)\leq 8$, we have $|Q_4| + |Q_5| = 8$. Now color $G$ as follows: Assign Color $1$ to one vertex from each of the sets $Q_1$, $Q_3$, and $Q_5$, Color $2$ to one vertex from each of sets $Q_2$, $Q_4$, and $Q_6$, and assign Color $8$ to the vertex of $X$. Now properly color the remaining $5$ vertices of $Q_1\cup Q_2$ from the set $\{3, 4, 5, 6, 7\}$ of colors, and the remaining $6$ vertices of $Q_4\cup Q_5$ from $\{3, 4, 5, 6, 7, 8\}$. This is an $8$-coloring of $G$, a contradiction to the fact that $\chi(G) = 9$. Hence, $X = \emptyset$. Similarly, we can show that $Y = \emptyset$. Then $G$ is a blowup of a Petersen graph, which is a contradiction to Lemma~\ref{lem-blowupptrsen}. Therefore, no such graph $G$ exists and the lemma follows.
\end{proof}

\begin{lemma}\label{lem-H3}
 If $G$ is a blowup of $H_3$, then one of the constituent-cliques in $G$ corresponding to the vertices $v_i,i\in [6]$ of $H_3$ is empty.
\end{lemma}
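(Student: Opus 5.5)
We will follow the template of the proof of Lemma~\ref{lem-H2} and argue by contradiction. Suppose $G$ is a blowup of $H_3$ in which every constituent-clique $Q_i$ corresponding to $v_i$, $i\in[6]$, is non-empty. Let $Q_i$, $i\in[6]$, together with the cliques $X, Y, \dots$ corresponding to the extra vertices of $H_3$ outside the $C_6$ (see Figure~\ref{fig1}), be the constituent-cliques of $G$. Throughout we use three facts: $8\le d(u)\le 9$ and $|d(u)-d(v)|\le 1$ for all $u,v$ (Lemma~\ref{lem-cntrbkc}); $\omega(G)\le 8$; and $G$ contains none of the $d_1$-choosable graphs of Lemma~\ref{lem-nod1choosable}.

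\textbf{Step 1: comparing closed neighbourhoods.} In a blowup, two vertices lie in nested closed neighbourhoods whenever the corresponding vertices of $H_3$ do. So for each extra vertex $x$ of $H_3$ we look for a cycle vertex $v_j$ with $N_{H_3}[v_j]\subsetneq N_{H_3}[x]$, exactly as $N[x]=N[v_1]\cup Q_3\cup Y\cup Z$ was used for $H_2$. The degree condition then forces the difference set to contain at most one vertex. This makes some constituent-cliques empty or singletons, and it pins $d(x)=9$ and $d(v_j)=8$. We will repeat this for every such comparable pair, including pairs of cycle vertices $v_j,v_{j+1}$ whose neighbourhoods become nested once some extra clique is empty. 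This should give many equalities of the form $|Q_k|=1$.

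\textbf{Step 2: bounding the remaining cliques.} Next we bound the sizes of the extra cliques and of the large $Q_i$. Large ones would create induced $K_3\vee P_4$, $K_4\vee 2K_2$, or $K_2\vee \textit{kite}$ on the union of a few consecutive constituent-cliques, contradicting Lemma~\ref{lem-nod1choosable}; this is what gave $|X|\le 2$ and then $|X|=1$ in Lemma~\ref{lem-H2}. Combining these bounds with the degree equations $d(v)=\sum(\text{adjacent cliques})-1\in\{8,9\}$ for a vertex $v$ in each clique should determine the sizes almost completely. Typically a pair such as $|Q_1|+|Q_2|$ will be fixed at $7$, and another sum will be fixed at $8$ by $\omega\le 8$.

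\textbf{Step 3: finishing.} In the residual configuration we will exhibit an explicit $8$-coloring. We give one color to a representative of each of $Q_1,Q_3,Q_5$ and another to a representative of each of $Q_2,Q_4,Q_6$; these are independent sets because the $C_6$ is induced. The singleton extra cliques get the colors left free by their neighbours. Each remaining clique pair $Q_i\cup Q_{i+1}$ is then colored greedily from the leftover palette, counting to check that at most $8$ colors suffice. This contradicts $\chi(G)=9$. Alternatively, if Step 1 forces all extra cliques to be empty, $G$ becomes a blowup of the Petersen graph (or of $H_2$), and Lemma~\ref{lem-blowupptrsen} or Lemma~\ref{lem-H2} gives the contradiction directly.

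The main obstacle is Step 1 and Step 2 together: identifying exactly which neighbourhood containments exist in $H_3$, and which forbidden subgraph to use at each size bound. If no containment is available for some extra clique, we will instead use the $C_4$/$P_6$-freeness-free degree bookkeeping around the vertex $v_j$ of smallest degree to squeeze its size.
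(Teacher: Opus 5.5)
Your toolkit is the same one the paper uses: closed-neighbourhood containments, the obstructions of Lemma~\ref{lem-nod1choosable}, degree counts, an explicit $8$-colouring, and a fallback to Lemma~\ref{lem-H2}. But the proposal is a template rather than a proof. It never pins down $H_3$, and the ``main obstacle'' you defer is essentially the whole lemma. The missing organising idea is a case split on which of the extra cliques $X,Y,Z$ are empty. In $H_3$, each of $x,y,z$ is adjacent to four consecutive cycle vertices ($x$ to $v_6,v_1,v_2,v_3$; $y$ to $v_2,\dots,v_5$; $z$ to $v_4,v_5,v_6,v_1$), and $x,y,z$ are pairwise non-adjacent. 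Hence for an extra vertex $w$, every containment $N[v_j]\subsetneq N[w]$ has a difference made of cycle cliques only. So your Step~1 can only ever force $|Q_k|=1$; it can never empty an extra clique. The reduction to $H_2$ therefore has to be its own case: if at least two of $X,Y,Z$ are empty, $G$ is a blowup of $H_2$ with all $Q_i\neq\emptyset$, contradicting Lemma~\ref{lem-H2}.

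The real gap is the case $X,Y,Z\neq\emptyset$. There, no two vertex classes of $H_3$ have nested closed neighbourhoods; for instance, $N[v_1]=\{v_1,v_2,v_6,x,z\}$ lies in neither $N[x]$ nor $N[z]$. So Step~1 is vacuous. Your fallback of degree bookkeeping cannot finish either: if every constituent-clique has size $2$, every degree equals $9$ and $\omega=6$.

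The paper closes this case as follows:
\begin{enumerate}
\item $|Q_1|\le 2$, since otherwise $G[Q_1\cup Z\cup Q_6\cup X\cup Q_2]$ contains $K_3\vee P_4$. Note that the $P_4$ is $z\,v_6\,x\,v_2$, which runs through two different extra cliques, not along consecutive cycle cliques as your Step~2 envisages. By symmetry, $|Q_i|\le 2$ for all $i$.
\item $|X|=|Y|=|Z|=1$: if $|X|=2$, a $K_2\vee\textit{kite}$ forces $|Q_1|=|Q_2|=1$, and then $d(x)\le 7$.
\item Finally $d(v_1)\le 2+2+2+1+1-1=7$. This is a degree contradiction, not a colouring.
\end{enumerate}

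Only in the remaining case does your sketch run as planned. With exactly one extra clique empty, say $X$, the containments $N[z]=N[v_6]\cup Q_4$ and $N[y]=N[v_3]\cup Q_5$ give $|Q_4|=|Q_5|=1$. Next, a $K_2\vee\textit{kite}$ together with the degree bound $d(v_5)\le 6$ gives $|Y|=|Z|=1$. The degree equations then give $|Q_1|+|Q_6|=|Q_2|+|Q_3|=7$. An explicit $8$-colouring finishes, and your Step~3 scheme works there.
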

 
\begin{proof}
Suppose, for the sake of contradiction, that $G$ is a blowup of $H_3$ such that all the constituent-cliques corresponding to $v_i$, $i\in [6]$ are non-empty (see Figure~\ref{fig1} for $H_3$). Let $X, Y, Z$, and $Q_i$, $i\in [6]$ be the constituent-cliques corresponding to vertices $x, y, z$, and $v_i$, $i\in 6$, respectively. By our assumption, we have $|Q_i|\geq 1$, for each $i\in [6]$. If $|X|\geq 3$, then $G[Q_1\cup Q_2\cup Q_3\cup Q_6\cup X]$ contains an induced $K_3\vee P_4$ in $G$, which is a contradiction to Lemma~\ref{lem-nod1choosable}. So we have $|X|\leq 2$. Similarly, we have $|Y|\leq 2$ and $|Z|\leq 2$.
    
 \begin{claim}\label{claim-H3claim1} Exactly one of the sets $X$, $Y$, and $Z$ is empty.
   	\end{claim}
   	
   	\begin{claimproof}
If any two of the sets $X$, $Y$, and $Z$ are empty, then $G$ becomes a blowup of $H_2$ with all the constituent-cliques corresponding to $v_i$, $i\in [6]$ being non-empty, which is a contradiction to Lemma~\ref{lem-H2}. Hence, at most one of the sets $X$, $Y$, and $Z$ is empty. Suppose that $X$, $Y$, and $Z$ are all non-empty. Now if $|Q_1|\geq 3$, then $G[Q_1\cup Z\cup Q_6\cup X\cup Q_2]$ contains an induced $K_3\vee P_4$ in $G$, a contradiction to Lemma~\ref{lem-nod1choosable}. So $|Q_1|\leq 2$. Similarly, we can show that $|Q_i|\leq 2$ for each $i\in \{2, \dots, 6\}$. Recall that $|X|\leq 2$. First, assume that $|X|=2$. Then $|Q_1| = |Q_2| = 1$; otherwise, $G[Q_1\cup Q_2\cup Q_3\cup Q_6\cup X]$ contains an induced $K_2\vee \textit{kite}$ in $G$, a contradiction to Lemma~\ref{lem-nod1choosable}. Then for any $x\in X$, we have $d(x) = |Q_1| + |Q_2| + |Q_3| + |Q_6| + |X| - 1\leq 1 + 1 + 2 + 2 + 2 - 1 = 7$, a contradiction. So we have $|X| = 1$. Similarly, we can show that $|Y| = |Z| = 1$. Then for any $v_1\in Q_1$, we have $d(v_1) = |Q_1| + |Q_2| + |Q_6| + |X| + |Z| - 1\leq 2 + 2 + 2 + 1 + 1 - 1 = 7$, a contradiction. So Claim~\ref{claim-H3claim1} holds.
   	\end{claimproof}
 
         Using Claim~\ref{claim-H3claim1}, we have exactly one of the sets $X$, $Y$, and $Z$ is empty. By symmetry, we may assume that $X = \emptyset$, $Y\neq \emptyset$, and $Z\neq \emptyset$. Let $z\in Z$ and $v_6\in Q_6$. Clearly, $N[z] = N[v_6]\cup Q_4$. Then, since $|d(z) - d(v_6)|\leq 1$, we have $|Q_4| = 1$. This also implies that $d(v_6) < d(z)$, and hence $d(z) = 9$. Similarly, by comparing $d(y)$ with $d(v_3)$ for $y\in Y$ and $v_3\in Q_3$, we have $|Q_5| = 1$. Moreover, $d(v_3) < d(y)$ implying that $d(y) = 9$. We know that $|Z|\leq 2$. Let $|Z| = 2$. Then $|Q_5| = |Q_6| = 1$; otherwise, $G[Q_1\cup Q_4\cup Q_5\cup Q_6\cup Z]$ contains an induced $K_2\vee \textit{kite}$ in $G$, which is a contradiction to Lemma~\ref{lem-nod1choosable}. Then for any $v_5\in Q_5$, we have $d(v_5) = |Q_4| + |Q_5| + |Q_6| + |Z| + |Y| - 1\leq 1 + 1 + 1 + 2 + 2 - 1 = 6$, which is a contradiction. So we have $|Z| = 1$. Similarly, we can show that $|Y| = 1$. For $z\in Z$, we have $d(z) = 9 = |Q_1| + |Q_4| + |Q_5| + |Q_6| + |Z| - 1 = |Q_1| + 1 + 1 + |Q_6| + 1 - 1$. This implies that $|Q_1| + |Q_6| = 7$. Also, for $y\in Y$, we have $d(y) = 9 = |Q_2| + |Q_3| + |Q_4| + |Q_5| + |Y| - 1 = |Q_2| + |Q_3| + 1 + 1 + 1 - 1$. This implies that $|Q_2| + |Q_3| = 7$. Let $v_5\in Q_5$. Then $d(v_5) = |Q_4| + |Q_5| + |Q_6| + |Y| + |Z| - 1 = 1 + 1 + |Q_6| + 1 + 1 - 1$. Since $8\leq d(v_5)\leq 9$, we have $5\leq |Q_6|\leq 6$. Similarly, since $8\leq d(v_4)\leq 9$ for $v_4\in Q_4$, we have $5\leq |Q_3|\leq 6$. Then $1\leq |Q_1|\leq 2$ and $1\leq |Q_2|\leq 2$. Now color $G$ as follows: Assign each of the $5$ vertices of $Q_6$ a unique color from the set $\{1, 2, 3, 4, 5\}$ of colors, one vertex of $Q_1$ the Color $6$, and the remaining one vertex of $Q_1\cup Q_6$ the Color $7$. Now assign Color $6$ to the vertex of $Q_5$, Color $8$ to the vertex of $Z$, Color $5$ to the vertex of $Q_4$, Color $1$ to the vertex of $Y$, and a unique color from the set $\{2, 3, 4, 6, 7\}$ to each of the $5$ vertices of $Q_3$. Assign Color $5$ to the one vertex of $Q_2$ and assign the remaining one vertex of $Q_2\cup Q_3$ the Color $8$. This is an $8$-coloring of $G$, which is a contradiction to the fact that $\chi(G) = 9$. Hence, no such graph $G$ exists and the lemma follows.
     \end{proof}
		
\begin{lemma}\label{lem-H4}
If $G$ is a blowup of $H_4$, then one of the constituent-cliques in $G$ corresponding to the vertices $v_i,i\in [6]$ of $H_4$ is empty.
	\end{lemma}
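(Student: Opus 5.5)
We argue by contradiction, in the same way as in the proofs of Lemma~\ref{lem-H2} and Lemma~\ref{lem-H3}. Suppose $G$ is a blowup of $H_4$ in which every constituent-clique $Q_i$ corresponding to $v_i$, $i\in[6]$, is non-empty. Let $X,Y,Z,\dots$ denote the constituent-cliques corresponding to the extra vertices of $H_4$ outside the $C_6$. In a blowup, $N[u]$ for $u$ in a constituent-clique is the union of the constituent-cliques of the closed neighbourhood of the corresponding vertex of $H_4$. We use two tools throughout: the degree condition $8\le d(u)\le 9$ with $|d(u)-d(w)|\le 1$, and the forbidden $d_1$-choosable subgraphs of Lemma~\ref{lem-nod1choosable}.

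\textbf{Step 1: neighbourhood containments.} We first find pairs of vertices of $H_4$ whose closed neighbourhoods are nested, say $N_{H_4}[a]\subseteq N_{H_4}[b]$. For the corresponding cliques we then get $N[q_a]\cup S= N[q_b]$, where $S$ is a union of constituent-cliques. The degree condition forces $|S|\le 1$, so several cliques have size exactly one or are empty. It also forces the degrees: the larger vertex has degree $9$ and the smaller has degree $8$.

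\textbf{Step 2: bounding clique sizes.} Next we bound the remaining cliques using Lemma~\ref{lem-nod1choosable}. A constituent-clique of size at least $3$ that is complete to an induced $P_4$ gives $K_3\vee P_4$. Two large cliques together with two pairs give $K_4\vee 2K_2$. A clique of size $2$ next to a kite gives $K_2\vee$kite. This should cap each extra clique at size $2$ and most $Q_i$ at size $2$ or $3$.

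\textbf{Step 3: reduction and casework.} If some extra cliques are empty, we check whether $G$ becomes a blowup of $H_2$ or $H_3$ with all $Q_i$ non-empty. In that case Lemmas~\ref{lem-H2} and~\ref{lem-H3} give a contradiction. If all the extra cliques are empty, $G$ is a Petersen blowup, and Lemma~\ref{lem-blowupptrsen} applies. This leaves the case where the extra cliques that $H_4$ has beyond $H_3$ are present. Here the degree equalities $d=8$ or $9$ at the vertices of the $C_6$ turn into linear equations in the $|Q_i|$. Together with $\omega(G)\le 8$ and the bounds from Step 2, these equations pin the sizes down to a few possibilities, or yield a direct degree contradiction. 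A typical contradiction is that the degree of some $v_i$ falls to at most $7$.

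\textbf{Step 4: explicit colouring.} For each surviving configuration we exhibit an explicit $8$-colouring, which contradicts $\chi(G)=9$. As before, we give one colour to a vertex from each of several pairwise non-adjacent cliques (for example $Q_1,Q_3,Q_5$, and also $Q_2,Q_4,Q_6$). The singleton extra cliques get colours reused from cliques they miss. The two large adjacent cliques, whose total size is at most $8$, are then coloured greedily from the remaining palette.

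The main obstacle is Step 3. $H_4$ has more extra vertices and more symmetry than $H_2$ or $H_3$, so the bookkeeping that rules out mixed size patterns is heavy, and it must be organised by symmetry to stay manageable. I would first try to show that all extra cliques are singletons and that two opposite $Q_i$ are singletons. This would reduce the remaining choice to two sums, each of size $7$ or $8$, which the colouring in Step 4 handles uniformly.
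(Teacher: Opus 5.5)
Your outline has the same shape as the paper's proof: nested closed neighbourhoods, the obstructions of Lemma~\ref{lem-nod1choosable}, reduction to Lemma~\ref{lem-H3}, and explicit $8$-colourings. The gap is that it is not yet a proof. It never uses the actual adjacencies of $H_4$, and every substantive step is conditional (``should cap'', ``I would first try''). So no size constraint, case distinction or colouring is established, and for this lemma those details are the entire content.

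In $H_4$, $x$ is adjacent to $v_6,v_1,v_2,v_3$, $y$ to $v_1,v_2,v_3,v_4$, and $z$ to $v_2,v_3,v_4,v_5$; also $y$ is adjacent to both $x$ and $z$, while $xz\notin E(H_4)$. The nestings you need are $N[y]=N[v_3]\cup Q_1=N[v_2]\cup Q_4$, $N[x]=N[v_1]\cup Q_3$ and $N[z]=N[v_4]\cup Q_2$. Hence $Y\neq\emptyset$ forces $|Q_1|=|Q_4|=1$, $X\neq\emptyset$ forces $|Q_3|=1$, and $Z\neq\emptyset$ forces $|Q_2|=1$. In addition, $K_3\vee P_4$ gives $|X|,|Y|,|Z|\le 2$. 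The paper applies Lemma~\ref{lem-H3} only when $Y=\emptyset$ or $X\cup Z=\emptyset$. Two cases therefore escape your Step~3 and must be handled directly: (i) $X,Y,Z$ all non-empty, and (ii) up to symmetry, $X=\emptyset$ and $Y,Z\neq\emptyset$.

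In case (i) the paper obtains $d(x)=d(y)=d(z)=9$, hence $|X|=|Y|=|Z|=2$ and $|Q_5|=|Q_6|=3$. All degrees there lie in $\{8,9\}$, and the paper kills this configuration with an explicit colouring. Your idea that the extra cliques are singletons can be proved here, and it is shorter:
\begin{itemize}
\item If $|Y|=2$ and $x\in X$, then $Y$ is complete to $\{x,v_1,v_2,v_3,v_4\}$. This set induces a kite, so $G$ contains $K_2\vee\textit{kite}$; the same works with $z$ in place of $x$.
\item If $|X|=2$ (resp.\ $|Z|=2$), then for any $y\in Y$ the clique $X$ (resp.\ $Z$) is complete to the kite induced by $\{y,v_6,v_1,v_2,v_3\}$ (resp.\ $\{y,v_5,v_4,v_3,v_2\}$).
\end{itemize}
Once all extra cliques are singletons, $d(y)=6$, a contradiction.

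In case (ii) you need a nesting that exists only because $X=\emptyset$, namely $N[z]=N[v_3]\cup Q_5$. This gives $|Q_5|=1$, and the kite argument above gives $|Y|=|Z|=1$. Next, $N[z]=N[v_4]\cup Q_2$ forces $d(z)=9$, and thus $|Q_3|=5$. Finally, $d(v_6)=|Q_6|+1\ge 8$ together with $\omega(G)\le 8$ forces $|Q_6|=7$. So every size is determined; there is no residual pair of ``sums of size $7$ or $8$''. The two big cliques $Q_3$ and $Q_6$ are non-adjacent, and you still have to exhibit a colouring, for example:
\begin{itemize}
\item colour $1$ on $Q_1\cup Q_5$ and on one vertex of $Q_3$;
\item colour $2$ on $Q_2\cup Q_4$ and on one vertex of $Q_6$;
\item colours $3,4$ on $Y,Z$, reused on two further vertices of $Q_6$;
\item colours $5,\dots,8$ on the remaining vertices of $Q_3$ and of $Q_6$.
\end{itemize}
Until these computations are written out, Steps~1--4 only describe the kind of argument that could work.
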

	
\begin{proof} 
Suppose, for the sake of contradiction, that $G$ is a blowup of $H_4$ such that all the constituent-cliques corresponding to $v_i$, $i\in [6]$ are non-empty (see Figure~\ref{fig1} for $H_4$). Let $X$, $Y$, $Z$, and $Q_i$, $i\in [6]$ be the constituent-cliques corresponding to vertices $x$, $y$, $z$, and $v_i$, $i\in 6$, respectively. By our assumption, we have $|Q_i|\geq 1$, for each $i\in [6]$. If $|X|\geq 3$, then $G[Q_1\cup Q_2\cup Q_3\cup Q_6\cup X]$ contains an induced $K_3\vee P_4$ in $G$, a contradiction to Lemma~\ref{lem-nod1choosable}. So we have $|X|\leq 2$. Similarly, we can show that $|Z|\leq 2$. Also, if $|Y|\geq 3$, then $G[Q_1\cup Q_2\cup Q_3\cup Q_4\cup Y]$ contains an induced $K_3\vee P_4$ in $G$, a contradiction to Lemma~\ref{lem-nod1choosable}. So we also have $|Y|\leq 2$. 

\begin{claim}\label{claim-lemma19claim1} The following hold.
\vspace{-0.4cm}
\begin{enumerate}
    \item If $Y\neq \emptyset$, then $|Q_1| = |Q_4| = 1$. 
    \item If $X\neq \emptyset$, then $|Q_3| = 1$. 
    \item If  $Z\neq \emptyset$, then $|Q_2| = 1$. 
\end{enumerate}
\end{claim}

\begin{claimproof}
Let $Y\neq \emptyset$ and $y\in Y$. Clearly, $N[y] = N[v_3]\cup Q_1$ for $v_3\in Q_3$.  Then, since $|d(y) - d(v_3)|\leq 1$, $|Q_1| = 1$. Similarly, by comparing $d(y)$ with $d(v_2)$ for $v_2\in Q_2$, we have $|Q_4| = 1$. Now let $X\neq \emptyset$ and $x\in X$. Clearly, $N[x] = N[v_1]\cup Q_3$ for $v_1\in Q_1$. Then, since $|d(x) - d(v_1)|\leq 1$, we have $|Q_3| = 1$. Similarly, if $Z\neq \emptyset$, then by comparing $d(z)$ with $d(v_4)$ for $z\in Z$ and $v_4\in Q_4$, we have $|Q_2| = 1$. 
\end{claimproof}

\begin{claim}\label{claim-lemma19claim2}
At least one of the sets $X$, $Y$, and $Z$ is empty.
\end{claim}

\begin{claimproof}
For the sake of contradiction, assume that $X$, $Y$, and $Z$ are all non-empty. Let $x\in X$, $y\in Y$, $z\in Z$, and $v_i\in Q_i$ for $i\in [4]$. By Claim~\ref{claim-lemma19claim1}, $|Q_1| = |Q_2| = |Q_3| = |Q_4|= 1$. Clearly, $N[y] = N[v_3]\cup Q_1$. Then $d(y) > d(v_3)$ implying that $d(y) = 9$. We have $d(y) = 9 = |Q_1| + |Q_2| + |Q_3| + |Q_4| + |X| + |Y| + |Z| - 1 = 1 + 1 + 1 + 1 + |X| + |Y| + |Z| - 1$. Then  $|X| + |Y| + |Z| = 6$. Also, since $|X|\leq 2$, $|Y|\leq 2$, and $|Z|\leq 2$, we have $|X| = |Y| = |Z| = 2$. Clearly, $N[z] = N[v_4]\cup Q_2$. Then $d(z) > d(v_4)$ implying that $d(z) = 9$. Then $d(z) = 9 = |Y| + |Q_2| + |Q_3| + |Q_4| + |Q_5| + |Z| - 1 = 2 + 1 + 1 + 1 + |Q_5| + 2 - 1$. This implies that $|Q_5| = 3$. Also, $N[x] = N[v_1]\cup Q_3$. Then $d(x) > d(v_1)$ implying that $d(x) = 9$. Then $d(x) = 9 = |Q_1| + |Q_2| + |Q_3| + |Q_6| + |Y| + |X| - 1 = 1 + 1 + 1 + |Q_6| + 2 + 2 - 1$. This implies that $|Q_6| = 3$. Now color $G$ as follows: Assign Color 1 to the vertices of $Q_1\cup Q_3$ and Color 2 to the vertices of $Q_2\cup Q_4$. Properly color the vertices of $X\cup Z$ from the set $\{3, 4\}$ of colors, the vertices of $Y$ from $\{5, 6\}$, the vertices of $Q_5$ from $\{1, 5, 6\}$, and the vertices of $Q_6$ from the set $\{2, 7, 8\}$ of colors. This produces an $8$-coloring of $G$, which is a contradiction to the fact that $\chi(G) = 9$. So Claim~\ref{claim-lemma19claim2} holds.
\end{claimproof}

Using Claim~\ref{claim-lemma19claim2}, we have at least one of the sets $X$, $Y$, and $Z$ is empty. If $Y = \emptyset$ or $X\cup Z = \emptyset$, then $G$ becomes a blowup of $H_3$ such that the constituent-cliques corresponding to $v_i$, $i\in [6]$ are not empty, which is a contradiction to Lemma~\ref{lem-H3}. So $Y\neq \emptyset$ and exactly one of the sets $X$ and $Z$ is empty. By symmetry, we may assume that $X = \emptyset$, $Y\neq \emptyset$ and $Z\neq \emptyset$. By Claim~\ref{claim-lemma19claim1}, $|Q_1| = |Q_2| = |Q_4| = 1$.
Let $y\in Y$, $z\in Z$, and $v_i\in Q_i$ for each $i\in[6]$. Now we have $N[z] = N[v_3]\cup Q_5$. Then, since $|d(z) - d(v_3)|\leq 1$, we have $|Q_5| = 1$. We have $|Y|\leq 2$. Let $|Y| = 2$. Then $|Q_3|= 1$; otherwise, $G[Q_1\cup Q_2\cup Q_3\cup Q_4\cup Y]$ contains an induced $K_2\vee \textit{kite}$, which is a contradiction to Lemma~\ref{lem-nod1choosable}. Then $d(y) = |Q_1| + |Q_2| + |Q_3| + |Q_4| + |Y| + |Z| - 1\leq 1 + 1 + 1 + 1 + 2 + 2 - 1 = 7$, which is a contradiction. So we must have $|Y| = 1$. Similarly, we can show that $|Z|= 1$. Also, $d(z) > d(v_4)$ implying that $d(z) = 9$. Then $d(z) = 9 = |Q_2| + |Q_3| + |Q_4| + |Q_5| + |Y| + |Z| - 1  = 1 + |Q_3| + 1 + 1 + 1 + 1 - 1$. This implies that $|Q_3| = 5$. We have $d(v_6) = |Q_1| + |Q_5| + |Q_6| - 1 = 1 + 1 + |Q_6| - 1$. Since $d(v_6)\geq 8$ and $\omega(G)\leq 8$, we must have $|Q_6| = 7$. Now color $G$ as follows: Assign Color $1$ to the vertices of $Q_1\cup Q_5$, Color $2$ to the vertices of $Q_2\cup Q_4$, Color $3$ to the vertex of $Y$, and Color $4$ to the vertex of $Z$. Now assign a unique color to each vertex of $Q_3$ from the set $\{1, 5, 6, 7, 8\}$ of colors and each vertex of $Q_6$ from the set $\{2, 3, 4, 5, 6, 7, 8\}$. This produces an $8$-coloring of $G$, which is a contradiction to the fact that $\chi(G) = 9$. Hence, no such graph $G$ exists and the lemma follows.
	\end{proof}
	
\subsection{Proof of Theorem~\ref{thm-mainthm}}\label{subsec-proofofthm-2}

Now we are ready to prove the main result of this paper, that is Theorem~\ref{thm-mainthm}, which validates the Borodin-Kostochka conjecture for $(P_6, C_4)$-free graphs.

\begin{proof} [\textbf{\textup{Proof of Theorem~\ref{thm-mainthm}}}]
For the sake of contradiction, assume that there exists a counterexample to Theorem~\ref{thm-mainthm}, that is $\mathcal{G}\cap \mathcal{C}_k\neq \emptyset$ for some $k\geq 9$. Then by Lemma~\ref{lem-cntrbkc}, there exists an imperfect vertex-critical graph $G\in \mathcal{G}\cap \mathcal{C}_9$ with $|d(u)-d(v)|\leq 1$ for any $u,v\in V(G)$. Moreover, $G$ contains an induced $C_5$. By Lemma~\ref{lem-mindegnocliquectset} and Lemma~\ref{lem-nouniversal}, $G$ does not contain any clique-cutset and any universal vertex. Also, by Theorem~\ref{thm-P6C4C6(copy)}, $G$ contains an induced $C_6$. Note that $G$ satisfies Lemma~\ref{lem-F3} and Lemmas~\ref{lem-band}-\ref{lem-H4}. Then by Theorem~\ref{thm-rsltonP6C4}, the only possible situation is that $G$ is a blowup of $H_1$, where the constituent-cliques corresponding to $v_i,i\in [6]$ are not empty (see Figure~\ref{fig1}), that is, $G$ is a blowup of the Petersen graph. This contradicts Lemma~\ref{lem-blowupptrsen}. So we conclude that there does not exist any such counterexample $G\in \mathcal{G}$. Hence, Theorem~\ref{thm-mainthm} follows.
	\end{proof}
	
\section*{Declarations}

\noindent{\bf Conflict of interest} The authors do not have any financial or non financial interests that are directly or indirectly related to the work submitted for publication.

\noindent{\bf Data availability}
No data was used for the research described in this paper.

\noindent{\bf Funding}
The research of the second author is partially supported by NBHM grant\\ (02011/29/2023NBHM(R.P.)/R\&D II/6285). The research of the third author is supported by Junior/Senior Research Fellowship of the grant (02011/29/2023/R\&D II/15131).

	\end{document}